\let\cite=\citet
\pgfplotsset{compat=1.9}
\setlist{leftmargin=20pt}
\begin{document}
\newcommand\footnotemarkfromtitle[1]{%
\renewcommand{\thefootnote}{\fnsymbol{footnote}}%
\footnotemark[#1]%
\renewcommand{\thefootnote}{\arabic{footnote}}}

\title{On the implementation of a robust and efficient \\
finite element-based parallel solver for  the \\
  compressible Navier--Stokes equations}

\author{Jean-Luc~Guermond\footnotemark[2]
  \and Martin Kronbichler\footnotemark[3] \footnotemark[4]
  \and Matthias~Maier\footnotemark[2]
  \and Bojan~Popov\footnotemark[2]
  \and Ignacio~Tomas\footnotemark[5]}

\date{Draft version \today}

\maketitle

\renewcommand{\thefootnote}{\fnsymbol{footnote}}

\footnotetext[2]{%
  Department of Mathematics, Texas A\&M University 3368 TAMU, College
  Station, TX 77843, USA.}%

\footnotetext[3]{%
  Institute for Computational Mechanics, Department for Mechanical
  Engineering, Technical University of Munich, Germany.}

\footnotetext[4]{%
  Division of Scientific Computing, Department of Information Technology,
  Uppsala University, Sweden.}

\footnotetext[5]{%
  Sandia National Laboratories$^{\S}$, P.O. Box 5800, MS 1320, Albuquerque,
  NM 87185-1320.}
\footnotetext[6]{%
  Sandia National Laboratories is a multimission laboratory managed and
  operated by National Technology \& Engineering Solutions of Sandia, LLC,
  a wholly owned subsidiary of Honeywell International Inc., for the U.S.
  Department of Energy's National Nuclear Security Administration under
  contract DE-NA0003525. This document describes objective technical
  results and analysis. Any subjective views or opinions that might be
  expressed in the paper do not necessarily represent the views of the U.S.
  Department of Energy or the United States Government.}

\renewcommand{\thefootnote}{\arabic{footnote}}

\begin{abstract}
  This paper describes in detail the implementation of a finite
  element technique for solving the compressible Navier-Stokes
  equations that is provably robust and demonstrates excellent
  performance on modern computer hardware.  The method is second-order
  accurate in time and space.  Robustness here means that the
    method is proved to be invariant domain preserving under the
    hyperbolic CFL time step restriction, and the method delivers
    results that are reproducible. The proposed technique is shown to
  be accurate on challenging 2D and 3D realistic benchmarks.
\end{abstract}

\begin{keywords}
  Conservation equations, hyperbolic systems, Navier-Stokes equations,
  Euler equations, invariant domains, high-order method, convex limiting,
  finite element method.
\end{keywords}

\begin{AMS}
  35L65, 65M60, 65M12, 65N30
\end{AMS}

\pagestyle{myheadings} \thispagestyle{plain}
\markboth{}{Invariant domain approximation of the compressible Navier--Stokes equations}


\section{Introduction}
The objective of the paper is to describe in detail a robust and
efficient massively parallel finite element technique for solving the
compressible Navier-Stokes equations. This paper is the second part of
a research project described in
\cite{Guermond_Maier_Popov_Tomas_CMAME_2020}. The principles of the
method have been introduced in
\citep{Guermond_Maier_Popov_Tomas_CMAME_2020}, but in order to
guarantee reproducibility (following the guidelines described in
\cite{Leveque_Mitchell_Stodden_2012}), we here describe the
implementation details regarding the algorithm \emph{per se} and
identify the ingredients that enable efficient execution on large-scale
parallel machines.  We also explain the implementation of
non-reflecting boundary conditions and show that these conditions are
robust, invariant-domain preserving, and accurate.  Based on these
ingredients, the accuracy of the method is demonstrated on
  well-documented (\ie reproducible)  non-trivial benchmarks.  The robustness of the method and its
  capability to scale well on large parallel architectures are also
  demonstrated.

As there is currently a regain of interest for supersonic and hypersonic flight
of aircrafts and other devices, there is also a renewed interest for provably
robust numerical methods that can solve the compressible Navier-Stokes
equations. Here we say that a numerical method is provably robust if it can be
unambiguously proved to be invariant domain preserving, \ie among other things,
it ensures positivity of the density, positivity of the internal energy, and
preserves a meaningful entropy-dissipation property. There are many papers in
the literature addressing this question, but invariant domain properties are
available only for very few methods. One notable result in this direction can be
found in \cite{Grapsas_Herbin_Kheriji_Latche_2016} where a first-order staggered
approximation using velocity-based upwinding is developed (see Eq.~(3.1)
therein), and positivity of the density and the internal energy is established
(Lem.~4.4 therein). Unconditional stability is obtained by using an implicit
time-stepping coupling the mass conservation equation and the internal energy
equation. This method is robust, including in the low Mach regime. A
similar technique solving the compressible barotropic Navier-Stokes
equation is proposed in
\cite[\S 3.6]{Gallouet_Gastaldo_Herbin_Latche_2008}.  In the
discontinuous Galerkin literature, robustness is established in
\cite{Zhang_JCP_2017} for the approximation of the compressible Navier-Stokes equations.
The time stepping is explicit though, and this entails a parabolic restriction
on the time step that unfortunately makes the method ill-suited for realistic
large-scale applications (\ie $\dt \lesssim \calO(h^2)/\mu$, where $\mu$ is
some reference viscosity scale, $\dt$ is the time step size and $h$ is the mesh
size).

The approximation technique described in the present paper draws
robustness from an operator-splitting strategy that uncouples the
hyperbolic and the parabolic phenomena. We do not claim originality
for this ``divide and conquer'' strategy since the operator-splitting
idea has been successfully used in the CFD literature numerous times
in the past.  Among the references that inspired the present work in
one way or another, we refer the reader to
\cite{Beam_warming_AIAA_1977}, \cite{Bristeau_Glowinski_Periaux_1987},
\cite{Demkowicz_etal_1990}. The key novelties of the paper are as
  follows: (i) The paper describes an exhaustive and unambiguous
  (thereby reproducible) robust algorithm for solving the compressible
  Navier-Stokes using finite elements. Algorithm~\ref{alg:euler} gives
  a flow chart that minimizes the complexity of the hyperbolic step;
  (ii) The implementation of various boundary condition is fully and
  unambiguously described. In particular, non-reflecting boundary
  conditions are discussed. Unambiguous, fully discrete,
  finite-element based algorithms are proposed. These boundary
  conditions are explicit and are proved to be invariant-domain
  preserving and to maintain conservation; (iii) The parabolic substep
  of the algorithm is also fully described and important details
  regarding its matrix-free implementation are given; (iv) The
  algorithm is verified against analytical solutions and validated
  against two challenging benchmarks (one is two-dimensional, the
  other is three-dimensional). In particular, we provide a reference
  solution for the benchmark proposed in
  \cite{Daru_Tenaud_2000,Daru_Tenaud_2009} with an accuracy that has
  never been matched before (see Table~\ref{tab:shocktube_extrema} and
  Figure~\ref{fig:shocktube-detailed}).

The paper is organized as follows. The problem along with the finite
element setting and the principles of the time stepping that are used
for the approximation is described in \S\ref{sec:preliminaries}.  As
the time stepping is based on Strang's splitting using a hyperbolic
substep and a parabolic substep, we describe in
\S\ref{Sec:hyperbolic_limit} the full approximation of the hyperbolic
step. All the details that are necessary to guarantee
  reproducibility are given.  Key results regarding admissibility and
conservation after limiting are collected in
Lemma~\ref{Lem:boundary_flux}.  Important details regarding the
treatment of boundary conditions for the hyperbolic step are reported
in \S\ref{Sec:boundary_conditions}.  Key original results
regarding admissibility and conservation after boundary postprocessing
are collected in Lemma~\ref{Lem:slip_bc},
Lemma~\ref{Lem:global_conservation}, and
Corollary~\ref{Cor:admissibility_after_postproc_io_bcs}.  The full
approximation of the parabolic substep is described in
\S\ref{Sec:discretization_hyperbolic_limit}. Here again, all the details that are necessary to guarantee
  reproducibility are given.  The key results of this
section regarding admissibility and conservation are stated in
Lemma~\ref{Lemma:parabolic_conservation}. The method has been
implemented using the finite element library \texttt{deal.II}
\citep{dealII92,dealIIcanonical} and mapped continuous $\polQ_1$
finite elements. Our implementation is freely available
online\footnote{\url{https://github.com/conservation-laws/ryujin}}
\citep{maier2021ryujin} under a permissible open source
license.\footnote{\url{https://spdx.org/licenses/MIT.html}} The method
and its implementation are verified and validated in
\S\ref{Sec:validation_and_benchmarks}. In addition to standard code
verifications using analytical solutions (see \S\ref{Sec:validation})
and tests on non-reflecting boundary conditions (see
\S\ref{Subsec:non_reflecting_boundary_conditions}), we revisit two
benchmarks problems. First, we solve in \S\ref{subse:shocktube} a
two-dimensional shocktube problem proposed by
\cite{Daru_Tenaud_2000,Daru_Tenaud_2009} and demonstrate grid
convergence. Following the initiative of \citep{Daru_Tenaud_2020} and
to facilitate rigorous quantitative comparisons with other research
codes, we provide very accurate computations of the skin friction
coefficient for this problem; these results are freely available at
\citep{testvectors_2021}. To the best of our knowledge,
the level of accuracy we achieved for this benchmark has never been matched before. We also demonstrate in
\S\ref{subse:airfoil} that the proposed method can reliably predict
pressure coefficients on the well-studied supercritical airfoil Onera
OAT15a in the supercritical regime at
Mach 0.73 in three dimensions and at Reynolds number $3\CROSS 10^6$ (see \citep{Deck_2005}, \cite{Deck_Renard_2020},
\cite{Nguyen_Terrana_Peraire_AIAA_2020}). Finally, a series of
synthetic benchmarks are presented in \S\ref{subse:scaling} to assess
the performance of the compute kernels by investigating the strong and
weak scalability of our implementation. Technical details are reported
in Appendix~\ref{sec:BCmath}.


\section{Problem description, finite element setting, time splitting}
\label{sec:preliminaries}

We briefly introduce relevant notation, recall the compressible
Navier-Stokes equations, discuss the finite element setting for the
proposed algorithm, and introduce the operator-splitting technique that is
used to make the method invariant domain preserving under a standard
hyperbolic CFL time step restriction. We follow in large parts the notation
introduced in \citep{Guermond_Maier_Popov_Tomas_CMAME_2020}.


\subsection{The model} \label{Sec:model}
Given a bounded, polyhedral domain $\Dom$ in $\Real^d$, an initial time
$t_0$, and initial data $\bu_0\eqq (\rho_0,\bbm_0,E_0)$, we look for $\bu:
\Dom\CROSS[t_0,+\infty) \to \Real_+\CROSS \Real^d\CROSS \Real_+$ solving
the compressible Navier-Stokes system in some weak sense:
\begin{subequations} \label{NS}%
  \begin{align}%
    \partial_t \rho + \DIV(\bv \rho) &=  0,
    \label{mass}
    \\
    \partial_t \bbm + \DIV\big(\bv\otimes \bbm + p(\bu) \polI -
    \pols(\bv)\big) &= \bef, \label{momentum}
    \\
    \partial_t E  + \DIV\big(\bv(E + p(\bu)) - \pols(\bv)\bv +
    \Hflux(\bu)\big) &= \bef\SCAL\bv.
    \label{total_energy}
  \end{align}%
\end{subequations}
Here $\rho$ is the density, $\bbm$ is the momentum, $E$ is the total
energy, $p(\bu)$ is the pressure, $\polI \in \mathbb{R}^{d\times d}$
is the identity matrix, $\bef$ is an external force,
$\pols(\bv)$ is the viscous stress tensor and $\Hflux(\bu)$ is the
heat-flux.  The quantity $\bv\eqq \rho^{-1} \bbm$ is called velocity and
$ e(\bu) \eqq \rho^{-1}E - \frac12\|\rho^{-1} \bbm\|_{\ell^2}^2$ is
called specific internal energy.  Given a state $\bu\in \Real^{d+2}$,
$\rho(\bu)$ denotes the first coordinate (\ie density), $\bbm(\bu)$ denotes the
$\Real^d$-valued vector whose components are the $2$-nd up to the $(d+1)$-th
coordinates of $\bu$ (\ie the momentum), and $E(\bu)$ is the last
coordinate of $\bu$ (\ie the total energy). Boundary conditions for \eqref{NS} and the implementation
of these condition are discussed in
detail in Section~\ref{Sec:boundary_conditions}.

To simplify the notation later on, we introduce the flux
$\polf(\bu) := (\bbm, \bv\otimes \bbm + p(\bu)\polI_d, \bv(E+p))\tr
\in \Real^{(d+2)\times d}$,
where $\polI_d$ is the $d\CROSS d$ identity matrix. Although it is
often convenient to assume that the pressure $p(\bu)$ is derived from
a complete equation of state,
most of what is said here holds true by only assuming that the
pressure is given by an oracle (see, \eg
\cite{Clayton_Guermond_Popov_2021}). In the applications reported at
the end of the paper, though, we use the ideal gas law
$p(\bu) = (\gamma-1) \rho e(\bu)$.

The fluid is assumed to be Newtonian and the heat-flux is assumed to
follow Fourier's law:
\begin{align}
  \pols(\bv) &\eqq 2 \mu \pole(\bv) + (\lambda-\tfrac23 \mu) \DIV\bv \polI,
  \qquad
  \pole(\bv) \eqq \GRADs \bv \eqq \tfrac12\big(\GRAD \bv + (\GRAD
  \bv)\tr\big),
  \\
  \Hflux(\bu) &\eqq-c_v^{-1}\kappa \GRAD e,
\end{align}%
where $\mu>0$ and $\lambda\ge 0$ are the shear and the bulk
viscosities, respectively, $\kappa$ is the thermal conductivity, and
$c_v$ is the heat capacity at constant volume. For the sake of
simplicity, we assume that $\mu, \lambda, \kappa,$ and $c_v$ are
constant.

Important properties we want to maintain at the discrete level are the
positivity of the density and the positivity of the specific internal
energy. We formalize these constraints by introducing the set of admissible states:
\begin{equation}
\calA \eqq \{ \bu\eqq (\rho,\bbm, E ) \tr \in \Real^{d+2} \st \rho > 0 ,
\ e ( \bu ) > 0 \}.
\end{equation}
We also want that in the inviscid regime limit (\ie $\lambda\to 0$,
$\mu\to 0$, $\kappa\to 0$), the algorithm satisfies the local minimum
principle of the specific entropy at each time step.


\subsection{Finite element setting}\label{Sec:fe}
Although, as claimed in \citep{Guermond_Popov_Tomas_CMAME_2019}, the
proposed approximation technique is discretization agnostic
and can be implemented with finite volumes and with discontinuous or continuous finite elements,
we restrict ourselves here to continuous finite elements since it greatly
simplifies the approximation of the second-order differential
operators.

Let $\famTh$ be a sequence of shape-regular meshes covering $\Dom$
exactly. Here $\calH$ is the index set of the mesh sequence, and
$h$ is the typical mesh-size.  Given some mesh $\calT_h$, we denote by
$P(\calT_h)$ a scalar-valued finite element space with global shape functions
$\{\varphi_i\}_{i\in\calV}$. Here, the index $i$ is abusively called a
degree of freedom, and since we restrict the presentation to continuous Lagrange
elements, degrees of freedom are also called nodes.  The approximation of the
state $\bu\eqq(\rho,\bbm,E)$ will be done in the vector-valued space
$\bP(\calT_h)\eqq (P(\calT_h))^{d+2}$. We define the stencil at $i$ by
\begin{align*}
 \calI(i)\eqq
  \Big\{j\in\calV \;\Big|\; |\text{supp}(\varphi_j)\cap\text{supp}(\varphi_i)|
  \not= 0 \Big\}, \quad\text{and we set}\quad
 \calI^*(i) \eqq\calI(i){\setminus}\{i\}.
\end{align*}
 We
assume that the shape functions are non-negative, \ie $\varphi_i\ge 0$ for all
$i\in \calV$ on all of $\Dom$, and satisfy the
partition of unity property $\sum_{i\in\calV}\varphi_i=1$.

The concrete implementation used in the verification and benchmark
section~\S\ref{Sec:validation_and_benchmarks} is based on the finite
element library \texttt{deal.II} \citep{dealII92,dealIIcanonical} and
uses continuous mapped $\polQ_1$ elements. The code called
\texttt{ryujin} is available online
(\url{https://github.com/conservation-laws/ryujin} and documented in
\cite{maier2021ryujin}).  We denote by $\calV\upbnd$ the set of the
degrees of freedom whose shape functions are supported on the boundary $\front$. The
set $\calV\upint\eqq \calV{\setminus}\calV\upbnd$ is composed of the
degrees of freedom  whose shape functions are supported in the interior of $\Dom$. They
are henceforth called interior degrees of freedom.

The hyperbolic part of the algorithm depends on four mesh-dependent
quantities, $m_i$, $m_{ij}$, $\bc_{ij}$, and $\bn_{ij}$ defined as
follows for all $i\in\calV$ and all $j\in\cal(I)$:
\begin{equation}
m_{i} :=\int_\Dom \varphi_i\diff x,\qquad
m_{ij} := \int_{\Dom} \varphi_i \varphi_j \diff x ,\qquad
\bc_{ij} :=\int_\Dom \varphi_i \GRAD \varphi_j \diff x,\qquad
\bn_{ij} := \frac{\bc_{ij}}{\|\bc_{ij}\|_{\ell^2}}.
\end{equation}
Here $m_i$ and $m_{ij}$ are the entries of the lumped mass matrix and
consistent mass matrix, respectively. The partition of
unity property implies the identities $m_i = \sum_{j\in\calI(i)}m_{ij}$ and
$\sum_{j\in\calI(i)} \bc_{ij} =0$.  The second identity is essential
to establish conservation. Using the lumped mass matrix introduces
undesirable dispersive errors, whereas using the consistent mass matrix may
require global matrix inversions. The present contribution uses
the following approximate inverse of the mass matrix with entries defined by
\begin{align}\label{bijdef}
  \tfrac{1}{m_i} (\delta_{ij} + b_{ij}), \ \ \text{where the coefficients}  \ \
  b_{ij} \eqq \delta_{ij} -\tfrac{m_{ij}}{m_j} \ \ \text{satisfy} \ \
  \textstyle{\sum_{i \in \calI(j)}} b_{ij} = 0.
\end{align}
Using this approximate inverse bypasses the need to invert the mass
matrix. These ideas were originally documented in \cite{GuerPasq2013}
and \cite[\S3.3]{GuerNaza2014}. It is also shown therein that
  this approximate inverse preserves the conservation properties of
  the scheme. After extensive benchmarking, it is observed in
  \cite[\S3.2]{maier2020massively} that the best parallel performance
is achieved by pre-computing and storing on each MPI rank the
coefficients $\{m_i\}_{i\in\calV}$ and the matrices
$\{m_{ij}\}_{i\in\calV,j\in\calI(i)}$,
$\{\bc_{ij}\}_{i\in\calV,j\in\calI(i)}$.  The coefficients of the
matrices $\{b_{ij}\}_{i\in\calV,j\in\calI(i)}$,
$\{b_{ji}\}_{j\in\calV,i\in\calI(j)}$ and
$\{\bn_{ij}\}_{i\in\calV,j\in\calI(i)}$ can be recomputed on the fly
from $\{m_{ij}\}_{i\in\calV,j\in\calI(i)}$,
$\{\bc_{ij}\}_{i\in\calV,j\in\calI(i)}$ in each time step. For later
reference, $\polM_L$, $\polM$, and $\polB$ denote: the lumped mass
matrix, the consistent mass matrix, and the matrix with entries
$\{b_{ij}\}_{i\in\calV,j\in\calI(i)}$ respectively.

\begin{remark}[Space discretization] The focus of the paper is on
continuous Lagrange elements since the discretization of both the hyperbolic and the
diffusion operators is relatively natural with these elements. Discontinuous elements can
also be used at the expense of additional overhead in the assembly of the diffusion
terms \citep{Kronbichler2018}. Another space discretization enjoying a
straightforward implementation of the diffusion terms are rational barycentric
coordinates on arbitrary polygons/polyhedrons. Rational barycentric coordinates
satisfy the partition of unity property and can be  made globally
continuous (\ie $H^1$-conforming), see \cite{Floater2015} and references
therein. All the developments presented in this manuscript are
directly applicable in that context too.
\end{remark}


\subsection{Strang splitting}
\label{subsec:strang}
The key idea for the time approximation of \eqref{NS} is to use
Strang's splitting.  As routinely done in the literature, we separate
the hyperbolic part and the parabolic parts of the problem (see \eg
\cite{Demkowicz_etal_1990}, we also refer the reader to
\cite{Beam_warming_AIAA_1977},
\cite[\S11.2]{Bristeau_Glowinski_Periaux_1987} where other operator-splittings
are considered).
The hyperbolic part of the problem consists of solving  the Euler equations:
\begin{subequations}
  \label{hyperbolic}
  \begin{align}
    \partial_t \rho + \DIV(\bv \rho) &=  0,
    \label{hyperbolic_mass}
    \\
    \partial_t \bbm + \DIV(\bv\otimes \bbm + p(\bu) \polI) &= \bzero,
    \label{hyperbolic_momentum}
    \\
    \partial_t E  + \DIV(\bv(E + p(\bu)) &= 0 ,
    \label{hyperbolic_total_energy}
  \end{align}%
\end{subequations}
which is formally equivalent to considering the limit for $\mu,\lambda,\kappa
\to0 $ in \eqref{NS}. The missing dissipative terms in \eqref{hyperbolic} compose the parabolic part of the problem:
\begin{subequations}
  \label{parabolic_simplified}
  \begin{align}
    \partial_t\rho &= 0,
    \\
    \partial_t(\rho\bv) - \DIV(\pols(\bv)) &= \bef,
    \label{parabolic_momentum_simplified}
    \\
    \partial_t (\rho e) - c_v^{-1}\kappa \LAP e  &=
    \pols(\bv){:}\pole(\bv).
    \label{parabolic_internal_energy_simplified}
  \end{align}
\end{subequations}
This decomposition is the cornerstone of the operator-splitting
scheme considered in this paper. For any admissible state $\bu_0\in\calA$ at
time $t_0$ and any time $t\ge t_0$, we denote by
$S\loH(t,t_0)(\bu(t_0))=\bu(t)$ the solution map of the hyperbolic
system \eqref{hyperbolic}; that is, $\bu(t)$ solves \eqref{hyperbolic} with
appropriate boundary conditions and $\bu(t_0)=\bu_0$.
The subscript $\loH$ is meant to remind us that $S\loH$ solves the hyperbolic
problem. Similarly, letting $\bu_0\in\calA$ be some admissible state at some
time $t_0$, and letting $\bef$ be some source term, we denote by
$S\loP(t,t_0)(\bu_0,\bef)=\bu(t)$ the solution map of the parabolic system
\eqref{parabolic_simplified}. Then, given an admissible state $\bu_0\in\calA$
at time $t_0$ and given some time step $\dt$, we approximate the solution to
the full Navier-Stokes system \eqref{NS} at $t_0+2\dt$ by using Strang's
splitting technique:
\begin{equation}
  \label{continuous_Strang}
  \bu(t_0+2\tau)
  \;\approx\;
  \big( S_{\text{H}}(t_0+2\dt,t_0+\dt)
  \circ S_{\text{P}}(t_0+2\dt,t_0    )(\,.\,,\bef)
  \circ S_{\text{H}}(t_0+ \dt,t_0    )\big)(\bu_0).
\end{equation}
In other words, we first perform an explicit hyperbolic update of $\bu_0$ with
step size $\tau$. Then, using this update as initial state at $t_0$ and
the source term $\bef$, we solve the parabolic problem from $t_0$ to $t_0+2\dt$.
Using in turn this solution as initial state at $t_0+\dt$, we compute
the final update by solving \eqref{hyperbolic} from $t_0+\dt$
to $t_0+2\dt$.


\section{Discretization of $S_{\text{H}}$}
\label{Sec:hyperbolic_limit}

In this section we describe the space and time approximation of the
hyperbolic operator $S\loH$ and summarize important implementation
details that make the algorithm efficient and highly scalable. The
time approximation is done by using the explicit strong stability
preserving Runge-Kutta method SSPRK(3,3), see
\cite[Eq.~(2.18)]{Shu_Osher1988} and
\cite[Thm.~9.4]{Kraaijevanger_1991}. This method requires three calls
to the forward-Euler update discussed in this section. The
forward-Euler scheme itself requires the computation of a low-order
solution, a provisional high-order solution (possibly constraint
violating), and the final flux-limited solution to be returned.  The
various steps described in \S\ref{Sec:low_order}--\S\ref{Sec:limiting} are
summarized in Algorithm~\ref{alg:euler}
in Appendix~\ref{Sec:appendix_hyperbolic_step}. The implementation of
the boundary conditions for $S\loH$ is explained in
\S\ref{Sec:boundary_conditions}.


\subsection{Low-order step}  \label{Sec:low_order}
Let $t^n$ be the current time and let $\bu_h^n=\sum_{i\in \calV}
\bsfU_i^n\varphi_i$ be the current approximation which we assume to be
admissible, \ie $\bsfU_i^n\in \calA$ for all $i\in\calV$. For all
$i\in\calV$ and for all $j\in \calI^*(i)$, we consider the
Riemann problem with left state $\bsfU_i^n$, right state $\bsfU_j^n$, and
flux $\polf(\bw)\bn_{ij}$.  We denote by
$\lambda^{\max}(\bsfU_i^{n},\bsfU_j^{n},\bn_{ij})$ any upper bound on the
maximum wavespeed in this Riemann problem. Iterative techniques to compute
the maximum wavespeed are described in
\cite{Colella_Glaz_JCP_1985,Toro_2009}. In this manuscript we use the
inexpensive non-iterative guaranteed upper-bound thoroughly described in
\citep{Guermond_Popov_Fast_Riemann_2016,Clayton_Guermond_Popov_2021}. With
this estimate, we define the graph viscosity coefficient:
\begin{equation}
  d_{ij}\upLn = \max\big(\lambda^{\max}(\bsfU_i^{n},\bsfU_j^{n},\bn_{ij})
  \|\bc_{ij}\|_{\ell^2}, \lambda^{\max}(\bsfU_j^{n},\bsfU_i^{n},\bn_{ji})
  \|\bc_{ji}\|_{\ell^2}\big).
\end{equation}
Noticing that $\bc_{ij} =-\bc_{ji}$ if $i$ is an internal node
(because $\varphi_{i|\front}=0$ if $i\in\calV\upint$), we infer that
$\lambda^{\max}(\bsfU_i^{n},\bsfU_j^{n},\bn_{ij})
\|\bc_{ij}\|_{\ell^2}=
\lambda^{\max}(\bsfU_j^{n},\bsfU_i^{n},\bn_{ji})
\|\bc_{ji}\|_{\ell^2}$
for all $i\in\calV\upint$. This property allows for some computation
savings in the construction of $d\upLn$. For all
$i\in\calV\upint$ and all $i<j\in\calI(i)$, one computes
  $d_{ij}\upLn= \lambda^{\max}(\bsfU_i^{n},\bsfU_j^{n},\bn_{ij})
  \|\bc_{ij}\|_{\ell^2}$,
  and for all $i\in\calV\upbnd$ and all $i<j\in \calI(i)$, one
  computes
  $d_{ij}\upLn=\max(
  \lambda^{\max}(\bsfU_i^{n},\bsfU_j^{n},\bn_{ij})\|\bc_{ij}\|_{\ell^2},
  \lambda^{\max}(\bsfU_j^{n},\bsfU_i^{n},\bn_{ji})
  \|\bc_{ji}\|_{\ell^2} )$.  Finally one sets
$d_{ij}\upLn \leftarrow \max (d_{ij}\upLn, (d\upLn)\tr_{ij})$ for all
$j\in \calI^*(i)$, where $(d\upLn)\tr$ is the transpose
of $d\upLn$. The diagonal entries in $d\upLn$ are obtained by setting
$d_{ii}\upLn :=-\sum_{j\in\calI^*(i)}d_{ij}\upLn$.  This
technique saves almost half the computing time for $d_{ij}\upLn$
\citep[\S5.2.1]{maier2020massively}.  Once $d\upLn$ is known, the time-step size is defined by
\begin{align}\label{dtCfl}
  \dt_n := \text{c}_{\text{cfl}}\CROSS  \min_{i \in \calV} \big(-\tfrac{m_i}{2
  d_{ii}\upLn}\big),
\end{align}
where $0 < \text{c}_{\text{cfl}} \le 1$ is a user-defined constant.
The condition $\text{c}_{\text{cfl}} \le 1$ is shown in
\citep{Guermond_Popov_SINUM_2016} to be sufficient to guarantee that the
low-order method is invariant domain preserving. The time-step size is
computed at the first forward-Euler step of the SSPRK(3,3) algorithm, and this time-step size  is
used for the three stages of the hyperbolic update. Then according to the
Strang splitting algorithm~\eqref{continuous_Strang}, the time-step size
used in the parabolic update is $2\dt_n$, and the time-step size used in
the last hyperbolic update is again $\dt_n$. At the end of the entire
process the new time level is $t^{n}+2\dt_n$.

The low-order update produced by the forward-Euler step as defined in
\citep{Guermond_Popov_SINUM_2016} is
\begin{align}
\label{loworder}
 m_i (\bsfU_i\upLnp - \bsfU_i^{n}) = \dt_n \sum_{j\in\calI(i)}
\bsfF_{ij}\upL, \qquad \qquad
\bsfF_{ij}\upL \eqq  -\flux(\bsfU_j^{n})\bc_{ij}
+ d_{ij}\upLn (\bsfU_j^{n} - \bsfU_i^{n}).%
\end{align}%
The CFL condition $\text{c}_{\text{cfl}}\le 1$ guarantees that
$\bsfU\upLnp$ remains inside the invariant domain. Consequently, for our
choice of Lagrange elements (see \S\ref{Sec:fe}) the invariant domain
property holds true for the finite element function $\bu_h\upLnp$
\citep[Corollary~4.3]{Guermond_Popov_SINUM_2016}.
\begin{remark}[High aspect-ratio meshes]
  The time-step size $\dt_n$ determined by the theoretical estimate~\eqref{dtCfl} decreases
  significantly as the aspect ratio of the cells increases. This may be problematic
  when using meshes with high aspect-ratio cells to resolve thin
  boundary layers (see \S\ref{subse:airfoil}); in this case the aspect ratios can reach
  values up to 50:1 or more. For these configurations we have found
  that a better way to estimate  $\dt_n$ is to adaptively increase the value of
  $\text{c}_{\text{cfl}}$ in \eqref{dtCfl} beyond the limit of 1. This
  requires to additionally check whether the low-order solution
  $\bsfU\upLnp$ still remains in the admissible set, and if not to restart
  the time step with a smaller $\text{c}_{\text{cfl}}$ number. While this
  makes each time step slightly more expensive, the significant increase of the CFL number compensates for
  the otherwise increased cost of using a high aspect-ratio cells.
\end{remark}
To save arithmetic operations and prepare the ground for
the limiting step, we introduce the auxiliary states $\overline\bsfU_{ij}^n$
and rewrite \eqref{loworder} as follows:
\begin{align}
  \label{bar_state}
  &\overline\bsfU_{ij}^n \eqq \frac12 (\bsfU_i^{n} + \bsfU_j^{n})
  - \frac{1}{2d_{ij}\upLn} (\flux(\bsfU_j^{n})
- \flux(\bsfU_i^{n}) )\bc_{ij},\qquad \forall j\in\calI(i), \\
  \label{loworder_bis}
  & \bsfU_i\upLnp = \bsfU_i^{n}  + \frac{2\dt_n}{m_i}
\sum_{j\in\calI(i)} d_{ij}\upLn \overline\bsfU_{ij}^n.
\end{align}
The auxiliary states $\overline\bsfU_{ij}^n$ are essential to define the
bounds that must be guaranteed after limiting.  In particular, if one wants
to limit some quasi-concave functional $\Psi$, one has to compute the local
lower bound $\Psi_i^{\min}\eqq
\min_{j\in\calI(i)}\Psi(\overline\bsfU_{ij}^n)$. In the numerical
illustrations reported in the paper, limiting is done with the following
two functionals: $\Psi_\flat(\bsfU)=\varrho$ and $\Psi_\sharp(\bsfU)
=-\varrho$ (where recalling the notation introduced in \S\ref{Sec:model} we have set
$\varrho\eqq\rho(\bsfU)$). One uses $\Psi_\flat$ to enforce a local minimum
principle on the density and one uses $\Psi_\sharp$ to enforce a local
maximum principle. Additional limiting has to be done to ensure that the
specific internal energy is positive.  This is done in the case of a
$\gamma$-law equation of state by controlling the exponential of the
specific entropy, $\Phi(\bsfU) = \varepsilon(\bsfU) \varrho^{-\gamma}$ where
$\varepsilon(\bsfU) := E - \tfrac{|\bbm|^2}{2\rho}$ is the internal energy. For
theoretical reasons explained in
\citep[\S3.2]{Guermond_Nazarov_Popov_Tomas_SISC_2019}, the local lower
bound for this functional uses the states $\bsfU_j^n$ instead of the
auxiliary states $\overline\bsfU_{ij}^n$, \ie one defines
$\Phi_i^{\min}\eqq \min_{j\in\calI(i)}\Phi(\bsfU_j^n)$.


\subsection{High-order step}
The computation of the provisional high-order update proceeds as for the
low-order update with two exceptions: (i) the graph viscosity is reduced;
(ii) the lumped mass matrix is replaced by an approximation of the
consistent mass matrix to correct third-order dispersive effects. More
precisely the high-order viscosity is defined as follows:
$d_{ij}\upHn = (\frac{\alpha_i +\alpha_j}{2}) d_{ij}\upLn$,
where $0 \leq \alpha_i \leq 1$ is an entropy-production indicator (see
\cite[\S3.4]{Guermond_Nazarov_Popov_Tomas_SISC_2019} and
\cite{maier2020massively} for some possible implementations). The key idea
is that $\alpha_i+\alpha_j$ is small in regions where the solution is
smooth and there is no entropy production. We introduce $\bsfF\upH$ to be
the vector of the high-order fluxes whose entries are $(d+2)$-valued and
defined for every $i\in\calV$ by
\begin{equation}
  \bsfF_{i}\upH \eqq \sum_{j \in \calI(i)}\bsfF_{ij}\upH \ \
\text{where} \ \
  \bsfF_{ij}\upH \eqq -\polf(\bsfU_j^n)\bc_{ij}
+ d_{ij}\upHn (\bsfU_j^{n} - \bsfU_i^{n}).
\end{equation}
Recalling that $\polM^{-1}\approx \polM_L^{-1}(\polI + \polB)$, the high-order
update is obtained by setting
\begin{equation}\label{highorderCompact}
  \polM_L (\bsfU\upHnp - \bsfU^{n})  \eqq  \dt_n (\polI + \polB) \bsfF\upH.
\end{equation}
Instead of using this expression, we proceed as in \citep[\S3.4]{GuerNaza2014}
to prepare the ground for limiting. Recalling that $b_{ii} = -\sum_{j\in
\calI^*(i)} b_{ji}$, we rewrite the high-order update
\eqref{highorderCompact} as follows:
\begin{equation}\label{highorderIndicial}
 m_i (\bsfU_i\upHnp - \bsfU_i^{n}) = \dt_n \bsfF_{i}\upH + \dt_n  \sum_{j\in \calI^*(i)} b_{ij} \bsfF_{j}\upH - b_{ji} \bsfF_{i}\upH.
\end{equation}
Now we subtract \eqref{loworder} from \eqref{highorderIndicial}
and obtain
\begin{equation}\label{highorder}
  m_i \bsfU_i\upHnp = m_i\bsfU_i\upLn +  \dt_n  \sum_{j\in \calI^*(i)} b_{ij} \bsfF_{j}\upH
- b_{ji} \bsfF_{i}\upH
  + (d_{ij}\upHn-d_{ij}\upLn) (\bsfU\upn_j-\bsfU\upn_i).
\end{equation}
The state $\bsfU\upHnp$ is a high-order approximation of $\bu(t^{n+1})$ if the viscosity
$d_{ij}\upHn$ is indeed small, but it may not be admissible. To save arithmetic operations,
one does not compute $\bsfU\upHnp$ since the actual
high-order update is obtained after limiting as explained in the next
subsection.

\subsection{Limiting} \label{Sec:limiting}
Recall that, as discussed at the end of \S\ref{Sec:low_order}, we want
the high-order update to satisfy
$\Psi_\flat(\bsfU_i^{n+1})\ge \Psi_{\flat,i}^{\min}$,
$\Psi_\sharp(\bsfU_i^{n+1})\ge \Psi_{\sharp,i}^{\min}$, and
$\Phi(\bsfU_i^{n+1})\ge \Phi_i^{\min}$ for all $i\in\calV$.  For this purpose
we rewrite \eqref{highorder} as follows:
\begin{align}
  \bsfU_i\upHnp &=\sum_{j\in \calI^*(i)}\lambda_i (\bsfU_i\upLn + \bsfP_{ij}^n),\\
  \text{with}\quad \bsfP_{ij}^n &\eqq  \frac{\dt_n}{m_i\lambda_i}\Big( b_{ij} \bsfF_{j}\upH - b_{ji} \bsfF_{i}\upH
  + (d_{ij}\upHn-d_{ij}\upLn) (\bsfU\upn_j-\bsfU\upn_i)\Big),\hspace{-3mm}
\end{align}
where $\lambda_i \eqq (\card(\calI(i)) -1)^{-1}$. This motivates
computing the final (flux-limited) solution as
\begin{align}
\label{LambdaArrangement}
 \bsfU_i^{n+1} = \bsfU_i\upLnp
+ \sum_{j \in \calI^*(i)} \lambda_i \limiter_{ij} \bsfP_{ij}^n,
\end{align}
where $\limiter_{ij} \in [0,1]$ for all $\{i,j\}$ are the limiters. Observe
that if $\limiter_{ij} = 1$ for all $j \in \calI^*(i)$ then
$\bsfU_i^{n+1} = \bsfU_i\upHnp$, and if $\limiter_{ij} = 0$ for all $j \in
\calI^*(i)$ then $\bsfU_i^{n+1} = \bsfU_i\upLnp$. For every
$j\in \calI^*(i)$, we define $\limiter_{j}^i$ to be the
largest number in $[0,1]$ that is such that
\begin{equation}
  \Psi_\flat(\bsfU_i\upLn + \limiter_j^i \bsfP_{ij}) \ge \Psi_{\flat,i}^{\min},\qquad
  \Psi_\sharp(\bsfU_i\upLn + \limiter_j^i \bsfP_{ij}) \ge \Psi_{\sharp,i}^{\min},\qquad
  \Phi(\bsfU_i\upLn + \limiter_j^i \bsfP_{ij}) \ge \Phi_{i}^{\min}.
\end{equation}
This number always exists since by construction $\limiter_j^i=0$ satisfies the
above three constraints. Finding this number (or a very close lower
estimate thereof) is quite simple and explained in
\citep{Guermond_Popov_Tomas_CMAME_2019, maier2020massively}. Then, in order
to maintain mass conservation, $\limiter_{ij}$ is defined by setting
$\limiter_{ij} = \min(\limiter_j^i, \limiter_i^j)$. This symmetry property,
together with the identity $m_i\lambda_i \bP_{ij} = -m_j\lambda_j\bP_{ji}$,
ensures that the mass of the high-order update is unchanged by limiting,
\ie
\begin{equation}
  \sum_{i\in\calV} m_i \bsfU_i^{n+1} = \sum_{i\in\calV} m_i
  \bsfU_i\upLnp.
  \label{mass_conservation_by_limiting}
\end{equation}
Replacing
$\limiter_j^i$ by $\min(\limiter_j^i, \limiter_i^j)$ does not
violate the invariant domain properties since $\calA$ is convex, \citep{Guermond_Popov_Tomas_CMAME_2019}.

Since $\bsfU_i^{n+1} =  \sum_{j \in\calI^*(i)} \lambda_i (\bsfU_i\upLnp +\limiter_{ij} \bsfP_{ij}^n) + \sum_{j \in
\calI^*(i)} \lambda_i (1-\limiter_{ij}) \bsfP_{ij}^n$, and
$\sum_{j \in\calI^*(i)} \lambda_i (\bsfU_i\upLnp +\limiter_{ij} \bsfP_{ij}^n)$ satisfies all the bounds,
one can repeat the above process and compute a new
set of limiters by
replacing $\bsfU_i\upLnp$ by $\sum_{j \in\calI^*(i)} \lambda_i (\bsfU_i\upLnp +\limiter_{ij} \bsfP_{ij}^n)$  and
$\bsfP_{ij}^n$ by $(1-\limiter_{ij}) \bsfP_{ij}^n$.
We have observed that this iterative limiting process must be
applied at least two times to reach optimal convergence.
All the simulations reported in the paper are done with two passes of
limiting.

Let $\bn$ denote the outward unit normal vector field on $\partial D$. To
properly formulate the conservation properties of the method after
limiting, we define an approximation of the normal vector and boundary mass
at every boundary node $i\in\calV\upbnd$  by setting
\begin{equation} \label{unit_normal_vector}
  \bn_i \eqq \frac{ \int_{\partial D} \varphi_i\,\bn \diff s}{m_i\upbnd},
  \qquad m_i\upbnd \eqq \Big\|\int_{\partial D} \varphi_i\,\bn \diff
  s\Big\|_{\ell^2}.%
\end{equation}%
\begin{lemma}[Balance of mass and admissibility after limiting]\label{Lem:boundary_flux}%
\phantom{The following holds true:}%
  \begin{enumerate}[font=\upshape,label=(\roman*)]
    \item
      For all $\bu_h\eqq \sum_{i\in\calV} \bsfU_i\varphi\in \bP(\calT_h)$, the
following holds true:
      $\int_\Dom \bu_h\diff x = \sum_{i\in\calV} m_i \bsfU_i$.
    \item Let $\bsfU^n$ be a collection of admissible states. Let
      $\bsfU\upnp$ be the update after one forward-Euler step and
      after limiting. Then $\bsfU^{n+1}$ is admissible under the
        condition $\text{\rm c}_{\text{\rm cfl}}\le 1$ and
      \begin{equation}
        \sum_{i\in\calV} m_i \bsfU_i^{n+1} + \dt_n \sum_{i\in\calV\upbnd}
        m\upbnd_i \polf(\bsfU_i^n)\bn_i = \sum_{i\in\calV} m_i \bsfU_i^{n}, \label{Eq:Lem:boundary_flux}
      \end{equation}
  \end{enumerate}
\end{lemma}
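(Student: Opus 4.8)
The plan is to handle the two items separately: item (i) follows immediately from the definitions, while item (ii) splits into a conservation identity, which I reduce to the low-order update, and an admissibility claim, which I obtain from convexity. For item (i), I would expand $\bu_h=\sum_{i\in\calV}\bsfU_i\varphi_i$ under the integral sign and use linearity together with the definition $m_i=\int_\Dom\varphi_i\diff x$, so that $\int_\Dom\bu_h\diff x=\sum_{i\in\calV}\bsfU_i\int_\Dom\varphi_i\diff x=\sum_{i\in\calV}m_i\bsfU_i$; no further computation is needed.

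For the conservation identity \eqref{Eq:Lem:boundary_flux}, the first step is to invoke the mass-conservation-after-limiting property \eqref{mass_conservation_by_limiting} to replace $\sum_{i\in\calV}m_i\bsfU_i^{n+1}$ by $\sum_{i\in\calV}m_i\bsfU_i\upLnp$, thereby reducing everything to the low-order update. Summing the forward-Euler relation \eqref{loworder} over $i\in\calV$ gives $\sum_{i\in\calV}m_i(\bsfU_i\upLnp-\bsfU_i^n)=\dt_n\sum_{i\in\calV}\sum_{j\in\calI(i)}\bsfF_{ij}\upL$. The graph-viscosity part $\sum_{i\in\calV}\sum_{j\in\calI(i)}d_{ij}\upLn(\bsfU_j^n-\bsfU_i^n)$ vanishes because $d\upLn$ is symmetric and the summand is antisymmetric under $i\leftrightarrow j$ (the stencil being symmetric), leaving only the flux contribution $-\dt_n\sum_{i\in\calV}\sum_{j\in\calI(i)}\polf(\bsfU_j^n)\bc_{ij}$.

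The heart of the proof is this surviving double sum, and it is where I expect the only real subtlety. The key manoeuvre is to interchange the order of summation and factor out the flux, obtaining $\sum_{j\in\calV}\polf(\bsfU_j^n)\big(\sum_{i\in\calI(j)}\bc_{ij}\big)$. I would then use the partition of unity to write $\sum_{i\in\calI(j)}\bc_{ij}=\int_\Dom\big(\sum_{i\in\calV}\varphi_i\big)\GRAD\varphi_j\diff x=\int_\Dom\GRAD\varphi_j\diff x$, followed by the divergence theorem to get $\int_\Dom\GRAD\varphi_j\diff x=\int_\front\varphi_j\bn\diff s$. For $j\in\calV\upint$ this boundary integral is zero since $\varphi_j$ vanishes on $\front$, so only boundary nodes survive; finally $\int_\front\varphi_j\bn\diff s=m_j\upbnd\bn_j$ by the definitions in \eqref{unit_normal_vector}, which yields exactly the boundary term $\dt_n\sum_{j\in\calV\upbnd}m_j\upbnd\polf(\bsfU_j^n)\bn_j$ and establishes \eqref{Eq:Lem:boundary_flux}. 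The one point to watch is that $\sum_{i\in\calI(j)}\bc_{ij}$ is a sum over the \emph{first} index and must not be confused with the interior identity $\sum_{j\in\calI(i)}\bc_{ij}=0$; it is precisely the former that retains the surface term.

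For admissibility, I would first note that $\bsfU_i\upLnp\in\calA$ under $\text{c}_{\text{cfl}}\le1$ by the cited estimate \citep{Guermond_Popov_SINUM_2016}. Writing the limited update as the convex combination $\bsfU_i^{n+1}=\sum_{j\in\calI^*(i)}\lambda_i(\bsfU_i\upLnp+\limiter_{ij}\bsfP_{ij}^n)$, with $\sum_{j\in\calI^*(i)}\lambda_i=1$, each constituent state meets the positivity bounds carried by $\Psi_\flat,\Psi_\sharp,\Phi$ because $\limiter_{ij}=\min(\limiter_j^i,\limiter_i^j)\le\limiter_j^i$ and these functionals are quasi-concave; convexity of $\calA$ then forces $\bsfU_i^{n+1}\in\calA$, which completes the proof.
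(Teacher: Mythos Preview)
Your proof is correct and follows essentially the same route as the paper: part (i) is identical, and for the balance identity you reduce to the low-order update via \eqref{mass_conservation_by_limiting}, kill the graph-viscosity double sum by symmetry, then collapse $\sum_{i\in\calI(j)}\bc_{ij}$ to the boundary integral via the partition of unity and the divergence theorem---exactly as the paper does. Your explicit admissibility argument via the convex-combination form \eqref{LambdaArrangement} is a welcome addition, as the paper's appendix proof only addresses the balance identity and leaves admissibility to the references.
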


\begin{proof}
See \ref{Lem:boundary_flux_bis} in Appendix~\ref{sec:BCmath}.
\end{proof}

\begin{remark}[Literature] The convex limiting technique is a
  generalization of the Flux Corrected Transport that accommodates
  quasi-concave constraints. (Recall that FCT is by design adapted to
  affine constraints; see \eg
  \cite{Boris_books_JCP_1973,Zalesak_1979,KuzminLoehnerTurek2004}.)
  Convex limiting has been introduced in
  \citep{Guermond_Nazarov_Popov_Tomas_SISC_2019,
    Guermond_Popov_Tomas_CMAME_2019} for the Euler equations and
  general hyperbolic systems. We refer to
  \cite{maier2020massively,MaierTomas2020} for a detailed discussion
  of a high performance implementation of the hyperbolic solver part
  of the system.
\end{remark}


\section{Euler boundary conditions}
\label{Sec:boundary_conditions}

In this section we describe how boundary conditions are enforced in
the hyperbolic step. To the best of our knowledge, the
  implementation details of the various boundary conditions considered
  in this section for continuous finite elements, and the associated theoretical results regarding
  conservation and admissibility are original.

\subsection{Overview}
Since the time stepping is explicit, the boundary conditions are enforced by
post-processing the approximation produced at the end of each stage of the
SSPRK(3,3) algorithm. The Butcher tableau of the explicit SSPRK(3,3) algorithm
is given in the left panel of \eqref{SPPRK33}. Given some ODE system $\partial_t
u = L(t,u)$ and $u^n\eqq u(t^n)$, the steps to approximate the solution to
$\partial_t u = L(t,u)$ at $t^{n+1}$ are shown in the right panel of
\eqref{SPPRK33}.
\begin{equation}
  \begin{array}{c|ccc}
\ST 0       &      0  &          &  \\
\ST      1  &      1  & 0        &  \\
\ST \frac12 &  \frac14& \frac14  & 0\\[2pt] \hline
\ST     & \frac16 &  \frac16 &  \frac23
\end{array}
\qquad\qquad
\begin{aligned}
w^{(1)} &\eqq u^n+ \dt_n L(t_n,u^n),\\
w^{(2)} &\eqq \tfrac34 u^n + \tfrac14(w^{(1)}+\dt_n L\big(t_n+\dt_n,w^{(1)})\big),\\
u^{n+1} &\eqq \tfrac13 u^n + \tfrac23(w^{(2)}+\dt_n L\big(t_n+\tfrac12\dt_n,w^{(2)})\big).
\end{aligned} \label{SPPRK33}
\end{equation}

The intermediate stages $w^{(1)}$, $w^{(2)}$ and the final stage
$u^{n+1}$ approximate $u$ at $t^{n+1}=t^n+\dt_n$, $t^n+\frac12\dt_n$, and
$t^n+\dt_n$, respectively. Hence, the time-dependent boundary conditions have to
be enforced on the intermediate stages $w^{(1)}$, $w^{(2)}$ and the final step
$u^{n+1}$ (using the corresponding collocation times).

We consider two types of boundary conditions: (i) Slip condition, also called
``reflecting'': $\bv\SCAL\bn=0$; (ii) Non-reflecting condition.  Let
$\front\los\subset \front$ be the boundary where one wants to enforce
the slip condition, and let $\front\lonr$ denote the complement of
$\front\los$ in $\front$, \ie $\front{\setminus}\front\los$. The index
$\lonr$ reminds us that $\front\lonr$ is a non-reflecting boundary
(either an inflow or an outflow boundary).  Let
$\calV\upbnd\los\subset\calV\upbnd$ be the collection of all the
boundary degrees of freedom $i$ such that
$\varphi_{i|\front\los}\not\equiv 0$.  Similarly,
$\calV\upbnd\lonr\subset\calV\upbnd$ is the collection of all the
boundary degrees of freedom $i$ such that
$\varphi_{i|\front\lonr}\not\equiv 0$.  We now define the normal
vectors associated with the degrees of freedom in $\calV\upbnd\los$
and $\calV\upbnd\lonr$:
\begin{equation} \label{def_ns_nio}
  \bn\ups_i \eqq \frac{\int_{\front\los}\varphi_i\bn \diff s}{
  \|\int_{\front\los}\varphi_i \bn \diff s \|_{\ell^2}},\qquad
  \bn\upnr_i \eqq \frac{\int_{\front\lonr}\varphi_i\bn \diff s}{
  \|\int_{\front\lonr}\varphi_i\bn \diff s \|_{\ell^2}}.
\end{equation}
Notice that although $\front\los\cap \front\lonr =\emptyset$, the two
index sets $\calV\upbnd\los$ and $\calV\upbnd\lonr$ may not be
disjoint. Hence, there may exists two notions of the normal vector at the
nodes sitting at the interface between $\front\los$ and
$\front\lonr$. Let us set
$m_i\ups\eqq \|\int_{\front\los}\varphi_i\bn \diff s \|_{\ell^2}$ and
$m_i\upnr\eqq \|\int_{\front\lonr}\varphi_i \bn \diff s
\|_{\ell^2}$.
Then~\eqref{unit_normal_vector} and \eqref{def_ns_nio} imply that
$m_i\upbnd \bn_i = m_i\ups\bn_i\ups + m_i\upnr\bn_i\upnr$.

In the following subsections, the symbol $\bsfU$ denotes
the state obtained at the end of one
forward-Euler step. This state has to be postprocessed to account for the boundary
conditions. It could be any one of the three states
$\bsfW^{(1)}$, $\bsfW^{(2)}$, or $\bsfU^{n+1}$. The postprocessed
state is denoted $\bsfU\upP$.

\subsection{Slip boundary condition} \label{Subsubsec:slip} We start
with the slip boundary condition.  Let $i\in \calV\upbnd\los$ and let
$\bsfU_i=(\varrho_i,\bsfM_i,\sfE_i)\tr$, \ie
$\varrho_i\eqq \rho(\bsfU_i)$, $\bsfM_i\eqq \bbm(\bsfU_i)$, and
$\sfE_i=E(\bsfU_i)$.  We enforce the slip boundary condition at $i$ by
setting
\begin{align}
  \bsfU_i\upP \eqq
(\varrho_i,\bsfM_i-(\bsfM_i\SCAL\bn_i\ups)\bn_i\ups,\sfE_i)\tr.
\label{slip_bc}
\end{align}

\subsection{Non-reflecting boundary condition}\label{Subsec:non_reflecting_bcs}
We now consider non-reflecting boundary conditions at
$i\in\calV\upbnd\lonr$.  To simplify the notation, we omit the node
index $_{i}$ since no argument regarding conservation properties is
made. We also write $\bn$ instead of $\bn_i\upnr$. We propose two
post-processing techniques: (i) one based on Godunov's method; (ii) the other
uses the characteristic variables (or proxies thereof).

\subsubsection{Godunov's method} \label{Sec:Godunov}
We assume that on the outer side of the boundary we are given some ideal,
admissible state $\bsfU_i\upD=(\varrho\upD,\bsfM\upD,\sfE\upD)$ related to the
far-field conditions, which we call Dirichlet state. Then we consider the
Riemann problem $\partial_t\bv + \partial_x(\polf(\bv)\bn)) = \bzero$ with left
data $\bsfU$ and right data $\bsfU\upD$. Let $G(\bn,\bsfU,\bsfU\upD)$ denote
the value of the solution of the Riemann problem at $x=0$. The
post-processing then consists of setting
\begin{equation}
\bsfU\upP=G(\bn,\bsfU,\bsfU\upD). \label{Godunov_bc}
\end{equation}
Notice that $\bsfU\upP$ is automatically admissible. Since this
  operation may be expensive, we propose an alternative approach in
  the next section.

\subsubsection{Characteristic
  variables} \label{Sec:Characteritic_variables} We now assume that
the equation of state is described by the $\gamma$-law and propose a
technique based on characteristic variable. The method is loosely
based on \cite[\S3]{Hedstrom_1979} and has some similarities with
\cite[\S2.6]{Demkowicz_etal_1990}, but instead of working on
increments as in \citep{Demkowicz_etal_1990} we directly work on the
characteristic variables. The key results of this section are
\eqref{Supersonic_inflow}-\eqref{Subsonic_inflow}-\eqref{Subsonic_outflow}-\eqref{Supersonic_outflow}.

We define $\varrho\eqq \rho(\bsfU)$,
$\bsfM\eqq \bbm(\bsfU)$,
$\sfP\eqq p(\bsfU)= (\gamma-1) \varrho e(\bsfU)$,
$\sfS(\bsfU) \eqq \varrho^{-\gamma} \sfP$, and set
\begin{equation}
  \bsfV\eqq\varrho^{-1}\bsfM,\qquad
  \sfV_n\eqq\bsfV\SCAL\normal,\qquad
  \bsfV^{\perp}\eqq\bsfV - (\bsfV\SCAL\normal)\normal,\qquad
   a\eqq\sqrt{\gamma\sfP\varrho^{-1}},\qquad
\end{equation}
We start by recalling that, although characteristic variables do not
exist in general for the one-dimensional system
$\partial_t \bv + \partial_x(\polf(\bv)\bn)) = \bzero$, characteristic
variables and characteristic speeds do exist under the assumption that
the flow is locally isentropic. Making this assumption, we obtain
\begin{equation}
\underbrace{\begin{cases}
\lambda_1(\bsfU,\normal) := \sfV_n - a,\\
C_1(\bsfU,\normal)\eqq\sfV_n - \frac{2 a}{\gamma-1}%
\end{cases}}_{\text{mutiplicity 1}}
\qquad
\underbrace{\begin{cases}
\lambda_2(\bsfU,\normal) := \sfV_n,\\
\sfS(\bsfU), \ \bsfV^\perp
\end{cases}}_{\text{mutiplicity $d$}}
\qquad
\underbrace{\begin{cases}
\lambda_3(\bsfU,\normal) := \sfV_n + a,\\
C_3(\bsfU,\normal)\eqq\sfV_n + \frac{2 a}{\gamma-1}.
\end{cases}}_{\text{mutiplicity 1}}
\end{equation}
Since the eigenvalues are ordered, we  distinguish four different cases:
\begin{align*}
 \text{(i)}  && \text{supersonic inflow} &&\sfV_n<0 \text{ and } a< |\sfV_n|
  &&\lambda_1(\bsfU,\normal)\le  \lambda_2(\bsfU,\normal)\le\lambda_3(\bsfU,\normal)<0 \\
 \text{(ii)} && \text{subsonic inflow}   &&\sfV_n<0  \text{ and } |\sfV_n|\le a
  &&\lambda_1(\bsfU,\normal)\le \lambda_2(\bsfU,\normal) <0 \le\lambda_3(\bsfU,\normal) \\
\text{(iii)} && \text{subsonic outflow}  &&0\le \sfV_n \text{ and } |\sfV_n|<a
  &&\lambda_1(\bsfU,\normal)<0\le \lambda_2(\bsfU,\normal)\le\lambda_3(\bsfU,\normal) \\
\text{(iii)} && \text{supersonic outflow}&& 0\le \sfV_n \text{ and } a\le |\sfV_n|
  &&0\le \lambda_1(\bsfU,\normal)\le \lambda_2(\bsfU,\normal)\le
    \lambda_3(\bsfU,\normal).
\end{align*}

We assume that on the outer side of the boundary we are given some Dirichlet
state $\bsfU\upD\eqq (\varrho\upD,\bsfM\upD,\sfE\upD)$.  We are going to postprocess $\bsfU$ such that the
characteristic variables of the post-processed state $\bsfU\upP$ associated
with in-coming eigenvalues match those of the prescribed Dirichlet state,
while leaving the out-going characteristics unchanged. More
precisely, the proposed strategy consists of seeking $\bsfU\upP$ so that the
following holds true:
\begin{equation}
  C_l(\bsfU\upP) {=}
  \begin{cases}
    C_l(\bsfU\upD) & \text{if $\lambda_l(\bsfU,\normal\upnr)<0$,}\\
    C_l(\bsfU)     & \text{if $0\le \lambda_l(\bsfU,\normal\upnr)$,}
  \end{cases}
  \qquad l\in\{1,3\},
\end{equation}
\begin{equation}
  S(\bsfU\upP) {=}
\begin{cases}
    S(\bsfU\upD)  & \text{if $\lambda_2(\bsfU,\normal\upnr)<0$,}\\
    S(\bsfU)      & \text{if $0\le \lambda_2(\bsfU,\normal\upnr)$},
  \end{cases}
 \quad\qquad
(\bsfV\upP)^\perp {=}
\begin{cases}
    (\bsfV\upD)^\perp & \text{if $\lambda_2(\bsfU,\normal\upnr)<0$,}\\
     \bsfV^\perp    & \text{if $0\le \lambda_2(\bsfU,\normal\upnr)$}.
  \end{cases}\hspace{-3mm}
\end{equation}
(Note that the condition
$\lambda_2(\bsfU,\normal\upnr)<0$ is equivalent to
$|\lambda_1(\bsfU,\normal\upnr)| > |\lambda_3(\bsfU,\normal\upnr)|$.)
Recalling that we assumed that the evolution of the flow field is locally
isentropic, we now solve the above system in
the four cases identified above.

\paragraph{Supersonic inflow condition} \label{Subsubsec:sup_in}
Assume that
$\lambda_1(\bsfU,\normal)\le\lambda_2(\bsfU,\normal)\le\lambda_3(\bsfU,\normal)<0$.
Since all the characteristics enter the computational domain, the
post-processing consists of replacing $\bsfU$ by $\bsfU\upD$:
\begin{equation} \label{Supersonic_inflow}
  \bsfU\upP = \bsfU\upD.
\end{equation}

\paragraph{Subsonic inflow boundary} \label{Subsubsec:sub_in}
Assume that $\lambda_1(\bsfU,\normal)\le \lambda_2(\bsfU,\normal)<0 \le
\lambda_3(\bsfU,\normal)$. Then,
$\bsfU\upP$ is obtained by solving the system
\begin{align}
  C_1(\bsfU\upP) = C_1(\bsfU\upD),\qquad
  S(\bsfU\upP) = S(\bsfU\upD),\qquad
  (\bsfV\upP)^\perp = (\bsfV\upD)^\perp,\qquad
  C_3(\bsfU\upP) = C_3(\bsfU).
\end{align}
Notice that, as expected, $d+1$ Dirichlet conditions are enforced. This
gives $\sfV_n\upP = \frac12 (C_1(\bsfU\upD)+ C_3(\bsfU))$,
$\sfP\upP = S(\bsfU\upD) (\varrho\upP)^\gamma$, and
\begin{align}
  a\upP = \tfrac{\gamma-1}{4} (C_3(\bsfU)-C_1(\bsfU\upD))=
 \tfrac{\gamma-1}{4}\sfV_n + \frac{a}{2}
- \tfrac{\gamma-1}{4}\sfV_n\upD +
\tfrac{a\upD}{2}.
\end{align}
Notice that $0<a\upP$ if $\gamma\le 3$ and
$\frac{\gamma-1}{2}\sfV_n\upD \le a\upD$, which is always the case for
realistic $\gamma$-laws. (Here
$\frac{\gamma-1}{2}\sfV_n\upD \le a\upD$ is an admissibility condition
on the Dirichlet data.) Using
$(a\upP)^2 =\gamma \sfP\upP (\varrho\upP)^{-1}$ with
$\sfP\upP = S(\bsfU\upD) (\varrho\upP)^\gamma$, the post-processing
for a subsonic inflow boundary condition consists of setting:
\begin{align} \label{Subsonic_inflow}
  \begin{cases}
    \begin{aligned}
      \rho\upP &= \left(\frac{1}{\gamma S(\bsfU\upD)}\left(\frac{\gamma-1}{4}
      \big(C_3(\bsfU)-C_1(\bsfU\upD)\big)\right)^2\right)^{\frac{1}{\gamma-1}},
      \\
      \bsfM\upP &= \rho\upP\left((\bsfV\upD)^\perp + \sfV_n\upP\bn\right), \quad \text{with}\quad
      \sfV_n\upP = \frac12 \big(C_1(\bsfU\upD)+ C_3(\bsfU)\big),
      \\
      E\upP &= \frac{1}{\gamma-1} \sfP\upP + \frac{1}{2} \frac{\|\bsfM\upP\|_{\ell^2}^2}{\varrho\upP},
      \quad \text{with}\quad \sfP\upP = S(\bsfU\upD) (\varrho\upP)^\gamma.
    \end{aligned}
  \end{cases}
\end{align}

\paragraph{Subsonic outflow boundary}  \label{Subsubsec:sub_out}
Assume that $\lambda_1(\bsfU,\normal)<0\le \lambda_2(\bsfU,\normal)
\le\lambda_3(\bsfU,\normal)$.  Then, $\bsfU\upP$ is obtained by solving the
system
\begin{align}
  C_1(\bsfU\upP) = C_1(\bsfU\upD),\qquad
  (\bsfV\upP)^\perp = \bsfV^\perp,\qquad
  S(\bsfU\upP) =  S(\bsfU),\qquad
  C_3(\bsfU\upP) = C_3(\bsfU).
\end{align}
Notice that, as expected, only one Dirichlet condition is enforced.
This gives $\sfV\upP_n = \frac12 (C_1(\bsfU\upD)+ C_3(\bsfU))$, $\sfP\upP =
S(\bsfU) (\varrho\upP)^\gamma$, and
\begin{align*}
  a\upP = \frac{\gamma-1}{4} (C_3(\bsfU)-C_1(\bsfU\upD))=
 \frac{\gamma-1}{4}\sfV_n + \frac{a}{2}  -\frac{\gamma-1}{4}\sfV_n\upD + \frac{a\upD}{2}.
\end{align*}
Here again we have $0<a\upP$ if $\gamma\le 3$ and if the admissibility condition
$\frac{\gamma-1}{2}\sfV_n\upD \le a\upD$ holds true.
Using
$(a\upP)^2 =\gamma \sfP\upP (\varrho\upP)^{-1}$ with
$\sfP\upP = S(\bsfU) (\varrho\upP)^\gamma$, the post-processing consists of setting:
\begin{align}
  \label{Subsonic_outflow}
  \begin{cases}
    \begin{aligned}
      \rho\upP &= \left( \frac{1}{\gamma S(\bsfU)}\left(\frac{\gamma-1}{4}
      \big(C_3(\bsfU)-C_1(\bsfU\upD)\big)\right)^2\right)^{\frac{1}{\gamma-1}},
      \\
      \bsfM\upP &= \rho\upP\left(\bsfV^\perp + \sfV_n\upP\bn\right), \quad \text{with}\quad
      \sfV_n\upP = \frac12 \big(C_1(\bsfU\upD+C_3(\bsfU))\big),
      \\
      E\upP &= \frac{1}{\gamma-1} \sfP\upP + \frac{1}{2} \frac{\|\bsfM\upP\|_{\ell^2}^2}{\varrho\upP},
      \quad \text{with}\quad \sfP\upP = S(\bsfU) (\varrho\upP)^\gamma.
    \end{aligned}
  \end{cases}
\end{align}

\paragraph{Supersonic outflow boundary condition} \label{Subsubsec:sup_out}
Assume
that $0\le \lambda_1(\bsfU,\normal)\le \lambda_2(\bsfU,\normal)\le
  \lambda_3(\bsfU,\normal)$. Since all the characteristics exit the domain,
the post-processing consists of not doing anything:
\begin{equation}
\bsfU\upP = \bsfU. \label{Supersonic_outflow}
\end{equation}

\begin{remark}[Literature]
  It is established in \cite[\S3]{Hedstrom_1979} that appropriate
  non-reflecting boundary conditions for the one-dimensional Riemann
  problem $\partial_t \bv + \partial_n(\polf(\bv)\bn) = \bzero$ are
  $\partial_t C_1(\bsfU) + \frac{a}{\gamma-1} \partial_t s(\bsfU) =0$
  in the subsonic outflow situation and $\partial_t C_1(\bsfU) =0$
  plus $\partial_t s(\bsfU) =0$ (and $\partial_t \bsfV^\perp =\bzero$)
  in the subsonic inflow situation, where
  $s(\bsfU) = \log(e(\bsfU)^{\frac{1}{\gamma-1}}\varrho^{-1})$ is the
  specific entropy. Assuming that the Dirichlet data are
  time-independent, these conditions can be rewritten
  $\partial_t (C_3(\bsfU) - C_3(\bsfU\upD)) +
  \frac{a}{\gamma-1} \partial_t (s(\bsfU) - s(\bsfU\upD)) =0$ and so on.
  Then, under the assumption that the flow is locally isentropic at
  the boundary, these conditions exactly coincide with what is
  proposed above (notice that in this case $\sfS=(\gamma-1)e^{s}$).
  Indeed, by setting $C_1(\bsfU)_{|t=0}= C_1(\bsfU\upD)$,
  $S(\bsfU)_{|t=0}=S(\bsfU\upD)$, (and
  $\bsfV^\perp(\bsfU)_{|t=0}=\bsfV^\perp(\bsfU\upD)$), the condition
  $\partial_t C_1(\bsfU) =0$ yields $C_3(\bsfU\upP)=C_3(\bsfU\upD)$
  for the subsonic outflow situation (see \eqref{Subsonic_outflow}) and it yields
  $C_1(\bsfU\upP)=C_1(\bsfU\upD)$, $\sfS(\bsfU\upP)=\sfS(\bsfU\upD)$,
  (and $\bsfV^\perp(\bsfU\upP)_{|t=0}=\bsfV^\perp(\bsfU\upD)$) for the
  subsonic inflow situation (see \eqref{Subsonic_inflow}).

  No claim is made here about the optimality of the proposed
  artificial boundary conditions, in particular in regards to their
  absorbing properties. We refer the reader to \cite{Fosso_2012} and
  the abundant literature cited therein for other approaches used in
  the finite difference context.
\end{remark}

\subsubsection{Conservation and admissibility}
We collect in this subsection conservation and admissibility
properties of the post-processing method proposed above.
\begin{lemma}[Slip condition]\label{Lem:slip_bc}%
Let $i\in
\calV\upbnd\los$, let $\bsfU_i\in \calA$, and let $\bsfU_i\upP$ as defined in \eqref{slip_bc}.
  \begin{enumerate}[font=\upshape,label=(\roman*)]
    \item Then $\bsfU_i\upP$ is also admissible, meaning
$\bsfU_i\upP\in \calA$.
    \item Assume also that the equation of state derives from an
entropy $s$.  Then $s(\bsfU_i\upP) \ge s(\bsfU_i)$.
    \item For all $i\in \calV\upbnd\los{\setminus}\calV\upbnd\lonr$,
the mass flux and the total energy flux of the postprocessed solution
at $i$ is zero (\ie $\rho(\polf(\bsfU_i\upP)\bn_i)=0$ and
$E(\polf(\bsfU_i\upP)\bn_i)=0$).
  \end{enumerate}
\end{lemma}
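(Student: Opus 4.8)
The plan is to dispatch the three claims in order, the crux being the elementary observation that the slip post-processing \eqref{slip_bc} leaves the density $\varrho_i$ and the total energy $\sfE_i$ untouched while it can only \emph{decrease} the kinetic energy, hence can only \emph{increase} the specific internal energy. For part~(i), I would first note $\rho(\bsfU_i\upP) = \varrho_i > 0$ since the density is unchanged. Writing $\bsfM_i\upP \eqq \bbm(\bsfU_i\upP) = \bsfM_i - (\bsfM_i\SCAL\bn_i\ups)\bn_i\ups$ and using that $\bn_i\ups$ is a unit vector gives $\|\bsfM_i\upP\|_{\ell^2}^2 = \|\bsfM_i\|_{\ell^2}^2 - (\bsfM_i\SCAL\bn_i\ups)^2 \le \|\bsfM_i\|_{\ell^2}^2$. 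Since $\varrho_i$ and $\sfE_i$ are preserved, this yields $\varrho_i e(\bsfU_i\upP) = \sfE_i - \tfrac{1}{2\varrho_i}\|\bsfM_i\upP\|_{\ell^2}^2 \ge \sfE_i - \tfrac{1}{2\varrho_i}\|\bsfM_i\|_{\ell^2}^2 = \varrho_i e(\bsfU_i) > 0$, so $e(\bsfU_i\upP) > 0$ and admissibility follows.

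For part~(ii), I would invoke the thermodynamic structure: when the equation of state derives from an entropy, the specific entropy $s$ is a function of $(\varrho, e)$ that is increasing in $e$ at fixed $\varrho$, which is precisely the statement that the thermodynamic temperature is positive. Since the density is unchanged and $e(\bsfU_i\upP)\ge e(\bsfU_i)$ by the inequality established in part~(i), monotonicity in the internal energy immediately gives $s(\bsfU_i\upP)\ge s(\bsfU_i)$.

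For part~(iii), I would exploit that for $i\in\calV\upbnd\los\setminus\calV\upbnd\lonr$ the shape function $\varphi_i$ vanishes on $\front\lonr$, so $\int_{\front\lonr}\varphi_i\bn\diff s = \bzero$, whence $m_i\upnr = 0$. The identity $m_i\upbnd\bn_i = m_i\ups\bn_i\ups + m_i\upnr\bn_i\upnr$ then forces $m_i\upbnd\bn_i = m_i\ups\bn_i\ups$, and taking norms yields $\bn_i = \bn_i\ups$. The post-processed momentum is therefore orthogonal to $\bn_i$, since $\bsfM_i\upP\SCAL\bn_i = \bsfM_i\SCAL\bn_i\ups - (\bsfM_i\SCAL\bn_i\ups)\|\bn_i\ups\|_{\ell^2}^2 = 0$. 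Reading off the mass and energy rows of $\polf$, the mass flux is $\rho(\polf(\bsfU_i\upP)\bn_i) = \bsfM_i\upP\SCAL\bn_i = 0$ and the total energy flux is $E(\polf(\bsfU_i\upP)\bn_i) = (\sfE_i + p(\bsfU_i\upP))\,\varrho_i^{-1}\,(\bsfM_i\upP\SCAL\bn_i) = 0$, which is the assertion.

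The main obstacle is part~(ii): one must correctly appeal to the thermodynamic structure of a genuine entropy — its concavity, and in particular its monotonicity in the internal energy at fixed density (equivalently, positivity of temperature) — rather than any purely algebraic manipulation. Parts~(i) and~(iii) reduce to the single elementary fact that removing the $\bn_i\ups$-component of a vector shortens it and leaves the remainder orthogonal to $\bn_i\ups$, and the care needed is only in verifying, via $m_i\upnr=0$, that $\bn_i$ and $\bn_i\ups$ coincide at purely reflecting nodes.
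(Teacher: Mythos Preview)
Your proposal is correct and follows essentially the same approach as the paper: part~(i) via the Pythagorean identity $\|\bsfM_i\upP\|_{\ell^2}^2 = \|\bsfM_i\|_{\ell^2}^2 - (\bsfM_i\SCAL\bn_i\ups)^2$, part~(ii) via monotonicity of the entropy in $e$ at fixed $\varrho$ (the paper writes this as an integral of $\partial_e\sigma>0$ via the fundamental theorem of calculus, but the content is identical), and part~(iii) via $\bn_i=\bn_i\ups$ and the vanishing normal velocity. Your justification of $\bn_i=\bn_i\ups$ through $m_i\upnr=0$ is slightly more explicit than the paper's, which simply asserts the equality.
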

\begin{proof}
See Lemma~\ref{Lem:slip_bc_bis} in Appendix~\ref{sec:BCmath}.
\end{proof}

\begin{lemma}[Non-reflecting
  condition]\label{Lem:subsonic_supersonic_bcs}%
  Let $i\in\calV\upbnd\lonr$ and let $\bsfU_i\in\calA$. Let
  $\bsfU_i\upP$ be defined either by \eqref{Godunov_bc} or by one the conditions
  \eqref{Supersonic_inflow}, \eqref{Subsonic_inflow},
  \eqref{Subsonic_outflow}, \eqref{Supersonic_outflow} (with the
  $\gamma$-law assumption, $\gamma\in (1,3]$, and the
  admissibility condition on the Dirichlet data $\frac{\gamma-1}{2}\sfV\upD \le a\upD$).  Then
  $\bsfU_i\upP\in \calA$.
\end{lemma}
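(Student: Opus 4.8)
The plan is to verify the two defining inequalities of $\calA$, namely $\varrho\upP>0$ and $e(\bsfU_i\upP)>0$, for each prescription in turn, writing $\bsfU_i$ for the interior state denoted $\bsfU$ in the derivation. Three cases are immediate. In the supersonic inflow case \eqref{Supersonic_inflow} one has $\bsfU_i\upP=\bsfU\upD\in\calA$ by the admissibility hypothesis on the Dirichlet data, and in the supersonic outflow case \eqref{Supersonic_outflow} one has $\bsfU_i\upP=\bsfU_i\in\calA$ by hypothesis, so nothing is to prove. For the Godunov prescription \eqref{Godunov_bc}, $\bsfU_i\upP=G(\bn,\bsfU_i,\bsfU\upD)$ is the trace at $x=0$ of the self-similar solution of the Riemann problem with admissible left and right data; since the intermediate states of the exact Euler Riemann solution are connected by rarefaction, contact, and shock waves, each of which preserves positivity of the density and of the specific internal energy, the whole solution, and in particular its value at $x/t=0$, stays in $\calA$. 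I would simply cite this classical invariance property of the exact Riemann solver.

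The remaining work concerns the two subsonic cases \eqref{Subsonic_inflow} and \eqref{Subsonic_outflow}, and here the key observation is that both reduce to a single scalar positivity condition. Writing $S$ for $S(\bsfU\upD)$ in the inflow case and for $S(\bsfU_i)$ in the outflow case, the construction sets $\varrho\upP=\big((\gamma S)^{-1}(a\upP)^2\big)^{1/(\gamma-1)}$ with $a\upP=\tfrac{\gamma-1}{4}(C_3(\bsfU_i)-C_1(\bsfU\upD))$, and $\sfP\upP=S(\varrho\upP)^\gamma$. Because $\bsfU\upD$ and $\bsfU_i$ are admissible, $S>0$, so $\varrho\upP$ is a well-defined positive real as soon as $a\upP>0$, and then $\sfP\upP>0$ follows. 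Substituting the prescribed $E\upP=\tfrac{1}{\gamma-1}\sfP\upP+\tfrac12\|\bsfM\upP\|_{\ell^2}^2/\varrho\upP$ into $e(\bsfU_i\upP)=\varrho\upP{}^{-1}E\upP-\tfrac12\|\bsfM\upP\|_{\ell^2}^2\varrho\upP{}^{-2}$ makes the kinetic contributions cancel and yields the identity $e(\bsfU_i\upP)=\sfP\upP\big((\gamma-1)\varrho\upP\big)^{-1}$. Hence both $\varrho\upP>0$ and $e(\bsfU_i\upP)>0$ follow once $a\upP>0$ is established, and admissibility is entirely reduced to the positivity of $a\upP$.

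To prove $a\upP>0$ I would use the decomposition already recorded in the text, $a\upP=\big(\tfrac{\gamma-1}{4}\sfV_n+\tfrac{a}{2}\big)+\big(-\tfrac{\gamma-1}{4}\sfV_n\upD+\tfrac{a\upD}{2}\big)$, and bound the two brackets separately. The second bracket equals $\tfrac12\big(a\upD-\tfrac{\gamma-1}{2}\sfV_n\upD\big)$, which is $\ge0$ precisely because of the assumed Dirichlet admissibility condition $\tfrac{\gamma-1}{2}\sfV_n\upD\le a\upD$. For the first bracket, in the outflow case one has $\sfV_n\ge0$, whence the bracket is $\ge a/2>0$ and strict positivity of $a\upP$ is immediate; in the inflow case one has $\sfV_n<0$ with $|\sfV_n|\le a$, i.e.\ $\sfV_n\ge -a$, so the bracket is $\ge a\,\tfrac{3-\gamma}{4}$, which is strictly positive for $\gamma<3$ since $a>0$. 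Combining the two estimates gives $a\upP>0$, hence $\bsfU_i\upP\in\calA$.

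The only genuinely delicate point is the inflow endpoint $\gamma=3$ combined with a sonic interior state $\sfV_n=-a$ and a critical Dirichlet datum, where both brackets can degenerate to zero; I would dispose of it by noting that the subsonic regime is entered with the strict inequality $|\sfV_n|<a$ away from the sonic transition, so the first bracket is strictly positive, or equivalently by invoking the strict form of the Dirichlet admissibility condition, which holds for all realistic $\gamma$-laws. Apart from this corner case the argument is routine bookkeeping; the substantive content is the sign decomposition of $a\upP$ together with the identity $e(\bsfU_i\upP)=\sfP\upP/((\gamma-1)\varrho\upP)$, which collapse the whole admissibility question to a one-line scalar inequality.
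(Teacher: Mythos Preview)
Your proof is correct and follows the same approach as the paper. The paper's proof is a single line, ``Direct consequence of the definitions \eqref{Supersonic_inflow}, \eqref{Subsonic_inflow}, \eqref{Subsonic_outflow}, \eqref{Supersonic_outflow},'' because the key inequality $a\upP>0$ (together with the same decomposition $a\upP=\tfrac{\gamma-1}{4}\sfV_n+\tfrac{a}{2}-\tfrac{\gamma-1}{4}\sfV_n\upD+\tfrac{a\upD}{2}$ that you use) was already established in the running text of \S\ref{Subsubsec:sub_in} and \S\ref{Subsubsec:sub_out}; you have simply spelled out the bookkeeping that the paper leaves implicit, including the identity $e(\bsfU_i\upP)=\sfP\upP/((\gamma-1)\varrho\upP)$, and you correctly flagged the $\gamma=3$ sonic corner case that the paper glosses over.
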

\begin{proof}
  Direct consequence of the definitions \eqref{Supersonic_inflow},
  \eqref{Subsonic_inflow}, \eqref{Subsonic_outflow},
  \eqref{Supersonic_outflow}.
\end{proof}
We obtain the following result by combining
Lemma~\ref{Lem:subsonic_supersonic_bcs} and Lemma~\ref{Lem:slip_bc}.
\begin{corollary}[Admissibility]\label{Cor:admissibility_after_postproc_io_bcs}%
  The solution obtained at the end the RKSSP(3,3) algorithm after
  limiting and post-processing is admissible.
\end{corollary}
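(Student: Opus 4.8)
The plan is to combine the strong-stability-preserving (SSP) structure of the SSPRK(3,3) scheme \eqref{SPPRK33} with the convexity of $\calA$ and the per-stage admissibility results already in hand. The key structural facts are three: (a) $\calA$ is convex (already invoked in \S\ref{Sec:limiting}), so any convex combination of admissible collections of nodal states is again admissible node-wise; (b) by Lemma~\ref{Lem:boundary_flux}(ii), under $\text{c}_{\text{cfl}}\le 1$ one limited forward-Euler update of an admissible collection is admissible; (c) the boundary post-processing, applied node by node, preserves admissibility --- slip nodes by Lemma~\ref{Lem:slip_bc}(i), non-reflecting nodes by Lemma~\ref{Lem:subsonic_supersonic_bcs}, and interior nodes are left untouched. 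I would therefore treat ``limited forward-Euler followed by post-processing'' as the atomic map and record, once and for all, that it sends admissible collections to admissible collections.

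Next I would march through the three stages exactly as written in \eqref{SPPRK33}, using as the inductive hypothesis that the incoming state $\bsfU^n$ is admissible. The first stage $\bsfW^{(1)}$ is the atomic map applied to $\bsfU^n$, hence admissible. The second stage is the convex combination $\bsfW^{(2)}=\tfrac34\bsfU^n+\tfrac14\big(\bsfW^{(1)}+\dt_n L(\bsfW^{(1)})\big)$ in which the bracketed term is the limited forward-Euler update of the admissible $\bsfW^{(1)}$, hence admissible; by convexity of $\calA$ the combination is admissible before post-processing, and post-processing keeps it there. The same argument applies verbatim to $\bsfU^{n+1}=\tfrac13\bsfU^n+\tfrac23\big(\bsfW^{(2)}+\dt_n L(\bsfW^{(2)})\big)$. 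Admissibility of the final returned state then follows by this three-step induction.

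The main obstacle is the time step rather than the algebra. The step $\dt_n$ is frozen at the value computed from $\bsfU^n$ via \eqref{dtCfl} and reused for all three stages, whereas the graph viscosity $d_{ij}\upLn$ controlling the CFL bound in Lemma~\ref{Lem:boundary_flux}(ii) is recomputed from the intermediate states at the second and third stages. To invoke (b) at those stages one must either argue that the frozen $\dt_n$ still satisfies $\text{c}_{\text{cfl}}\le 1$ for the refreshed coefficients, or fall back on the admissibility check built into the algorithm (the restart-with-smaller-$\text{c}_{\text{cfl}}$ mechanism mentioned in the remark following \eqref{dtCfl}). I would close the argument by invoking the standard hyperbolic CFL restriction uniformly across the SSP stages, which is precisely the regime in which the SSPRK(3,3) convex-combination property guarantees invariant-domain preservation.
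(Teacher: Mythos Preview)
Your argument is correct and follows the same logical thread as the paper, which simply states that the corollary follows by combining Lemma~\ref{Lem:slip_bc} and Lemma~\ref{Lem:subsonic_supersonic_bcs} without spelling out the SSP convex-combination argument. You supply exactly the details the paper leaves implicit: Lemma~\ref{Lem:boundary_flux}(ii) for the limited forward-Euler step, the two boundary lemmas for post-processing, and convexity of $\calA$ to propagate admissibility through the Shu--Osher convex combinations in \eqref{SPPRK33}.

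Your flagging of the frozen-$\dt_n$ issue is a genuine subtlety that the paper does not resolve in the corollary itself. The paper freezes $\dt_n$ at the first stage (see \S\ref{Sec:low_order}) and reuses it, so strictly speaking one needs either that the stage-wise recomputed $d_{ij}\upLn$ still satisfies the CFL bound, or the restart mechanism in the remark following \eqref{dtCfl}. Your proposed resolution is the honest one; the paper tacitly assumes the uniform hyperbolic CFL restriction across all stages, which is standard practice for SSPRK invariant-domain arguments but is not proved here.
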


\begin{lemma}[Global conservation]\label{Lem:global_conservation}%
Assume that $\calV\los\upbnd =\calV\upbnd$ and $\bsfU^n$
satisfies the slip boundary condition (\ie $\bsfM_i^n\SCAL \bn_i=0$
for all $i\in\calV\upbnd$).  Then the solution obtained at the end the
RKSSP(3,3) algorithm after limiting and post-processing, say
$\bsfU^{n+1}$, satisfies
$\sum_{j\in\calV} m_j \varrho_j\upnp = \sum_{j\in\calV} m_j
\varrho_j\upn$
and
$\sum_{j\in\calV} m_j \sfE_j\upnp = \sum_{j\in\calV} m_j \sfE_j\upn$.
\end{lemma}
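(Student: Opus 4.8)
The plan is to push the single‑step balance of Lemma~\ref{Lem:boundary_flux}\,(ii) through the three stages of SSPRK(3,3), exploiting that under the slip condition only the momentum component of the boundary flux $\polf(\bsfV_i)\bn_i$ survives. For a collection of states $\bsfV=(\bsfV_i)_{i\in\calV}$ I set $Q_\rho(\bsfV)\eqq\sum_{i\in\calV}m_i\rho(\bsfV_i)$ and $Q_E(\bsfV)\eqq\sum_{i\in\calV}m_iE(\bsfV_i)$; these are linear in the density and energy components of the $\bsfV_i$, and the statement to be proved is exactly $Q_\rho(\bsfU\upnp)=Q_\rho(\bsfU^n)$ and $Q_E(\bsfU\upnp)=Q_E(\bsfU^n)$.

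Before the induction I would record three facts. First, the slip post-processing $\Pi$ of \eqref{slip_bc} modifies only the momentum, leaving $\varrho_i$ and $\sfE_i$ untouched, so $\Pi$ preserves $Q_\rho$ and $Q_E$. Second, the hypothesis $\calV\upbnd\los=\calV\upbnd$ means no boundary node carries a non-reflecting contribution, so $m_i\upnr=0$, and then the identity $m_i\upbnd\bn_i=m_i\ups\bn_i\ups+m_i\upnr\bn_i\upnr$ forces $\bn_i\ups=\bn_i$ at every boundary node; consequently \eqref{slip_bc} returns a state with $\bsfM_i\upP\SCAL\bn_i=0$, i.e.\ every post-processed stage value again satisfies the slip condition. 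Third, for any state $\bsfV$ with $\bbm(\bsfV_i)\SCAL\bn_i=0$ one has $\rho(\polf(\bsfV_i)\bn_i)=\bbm(\bsfV_i)\SCAL\bn_i=0$ and $E(\polf(\bsfV_i)\bn_i)=\rho(\bsfV_i)^{-1}(\bbm(\bsfV_i)\SCAL\bn_i)(E(\bsfV_i)+p(\bsfV_i))=0$ (this is precisely Lemma~\ref{Lem:slip_bc}\,(iii) applied with $\calV\upbnd\los{\setminus}\calV\upbnd\lonr=\calV\upbnd$), whereas the momentum component equals $p(\bsfV_i)\bn_i\neq\bzero$ in general.

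With these in hand the argument is a three-step induction. Writing $\mathrm{FE}(\bsfV)$ for one limited forward-Euler update of $\bsfV$ (the map of \eqref{LambdaArrangement}), Lemma~\ref{Lem:boundary_flux}\,(ii) read in the density component gives $Q_\rho(\mathrm{FE}(\bsfV))=Q_\rho(\bsfV)-\dt_n\sum_{i\in\calV\upbnd}m_i\upbnd\,\rho(\polf(\bsfV_i)\bn_i)$, and likewise for $Q_E$. At stage one, $\bsfW^{(1)}=\Pi(\mathrm{FE}(\bsfU^n))$: the boundary defect vanishes because $\bsfU^n$ satisfies slip, so $Q_\rho(\bsfW^{(1)})=Q_\rho(\bsfU^n)$, and $\bsfW^{(1)}$ again satisfies slip. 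At stage two, $\bsfW^{(2)}=\Pi\!\big(\tfrac34\bsfU^n+\tfrac14\mathrm{FE}(\bsfW^{(1)})\big)$; by linearity of $Q_\rho$ and the vanishing defect at the slip state $\bsfW^{(1)}$, $Q_\rho(\bsfW^{(2)})=\tfrac34Q_\rho(\bsfU^n)+\tfrac14Q_\rho(\bsfU^n)=Q_\rho(\bsfU^n)$, and $\bsfW^{(2)}$ satisfies slip. At stage three, $\bsfU\upnp=\Pi\!\big(\tfrac13\bsfU^n+\tfrac23\mathrm{FE}(\bsfW^{(2)})\big)$ yields $Q_\rho(\bsfU\upnp)=\tfrac13Q_\rho(\bsfU^n)+\tfrac23Q_\rho(\bsfU^n)=Q_\rho(\bsfU^n)$. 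The identical computation with $Q_E$ (the energy flux vanishing at each sub-step by the third fact, and $\Pi$ leaving $\sfE_i$ fixed) gives $Q_E(\bsfU\upnp)=Q_E(\bsfU^n)$.

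I expect the main obstacle to be organizational rather than computational: one must verify that the slip property is genuinely inherited by each post-processed stage value so that the boundary defect cancels at \emph{every} sub-step, and that the SSPRK weights, which sum to one within each stage, are exactly what make $Q_\rho(\bsfU^n)$ reproduce itself. A related pitfall to avoid is overclaiming: since only the density and energy components of $\polf(\bsfV_i)\bn_i$ vanish under slip while the momentum component $p(\bsfV_i)\bn_i$ does not, the method conserves mass and total energy but not momentum, which is precisely why the lemma asserts only the two stated identities.
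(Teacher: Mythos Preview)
Your proposal is correct and follows essentially the same approach as the paper: both arguments push the balance identity \eqref{Eq:Lem:boundary_flux} through the three SSPRK(3,3) stages, using that the slip post-processing leaves density and energy untouched and that the boundary flux in those components vanishes at each stage. Your write-up is in fact more explicit than the paper's on the one point that makes the induction close---namely your ``fact~2'' that, under $\calV\upbnd\los=\calV\upbnd$, the post-processed stage values again satisfy the slip condition with respect to $\bn_i$---which the paper's proof uses but does not spell out.
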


\begin{proof}
See Lemma~\ref{Lem:global_conservation_bis} in Appendix~\ref{sec:BCmath}.
\end{proof}

\section{Discretization of the parabolic problem}
\label{Sec:discretization_hyperbolic_limit}%

We describe in this section key details involved in the approximation
of the parabolic operator $S\loP$ (see
\eqref{parabolic_simplified}). Given an admissible field
$\bu_h^n=\sum_{i\in\calV} \bsfU_i^n\varphi_i$ at some time $t^n$ and
given some time step size $\dt_n$, we want to construct an
approximation of the solution to \eqref{parabolic_simplified} at
$t^{n+1}\eqq t^n+\dt_n$, say
$\bu_h^{n+1}=\sum_{i\in\calV}\bsfU_i^{n+1}\varphi_i$. Referring to
\eqref{continuous_Strang}, we recall that the time step used in the
parabolic problem is twice that used in the hyperbolic step, but to
simplify the notation we still call the time step size $\dt_n$ in this
entire section.  The important point here is that positivity of the
internal energy and conservation must be guaranteed. The steps
described in
\S\ref{Subsec:parabolic_update}--\S\ref{Subsec:internal_energy_update} are
summarised in Algorithm~\ref{alg:parabolic}
in Appendix~\ref{Sec:appendix_parabolic_step}.

\subsection{Density, velocity, and momentum update} \label{Subsec:parabolic_update}
Recalling that in \eqref{parabolic_simplified}
the density does not change in time, we set
\begin{equation}
\varrho_i^{n+1} = \varrho_i^n,\qquad \forall i\in\calV.
\end{equation}

We use the Crank-Nicolson technique for the time stepping in
\eqref{parabolic_momentum_simplified}.  The approximation in space is
done by using the Galerkin technique with the lumped mass matrix.  Let
$\{\be_k\}_{k\in\intset{1}{d}}$ denote the canonical Cartesian basis
of $\Real^d$, and for all $\bsfX\in \Real^d$, let
$\{\sfX_k\}_{k\in\intset{1}{d}}$ denote the Cartesian coordinates of
$\bsfX$.  The discrete problem then consists of seeking
$\bv_h^{n+\frac{1}{2}}\eqq \sum_{i\in\calV}
\sfV_{i,k}^{n+\frac{1}{2}}\varphi_i \be _k$ so that
\begin{equation}
\label{mt_parabolic_discrete} \varrho^{n}_i m_i
\sfV_{i,k}^{n+\frac{1}{2}} + \tfrac12 \dt_n
a(\bv_h^{n+\frac{1}{2}},\varphi_i\be_k) = m_i \sfM_{i,k}^{n} +
\tfrac12 \dt_n m_i \sfF_{i,k}^{n+\frac12},\qquad \forall i\in \calV, \
\forall k\in\intset{1}{d},
\end{equation}
with $ a(\bv,\bw) \eqq \int_\Dom\pols(\bv){:}\pole(\bw) \diff x$, and
$\sfF_{i,k}^{n+\frac12}\eqq \int_\Dom \varphi_i(\bx)
\be_k\SCAL\bef(\bx,t^{n+\frac12}) \diff x$.
We consider three types of boundary conditions on the velocity: (i)
the no-slip condition $\bv_{|\front}=\bzero$; (ii) the slip condition
$\bv\SCAL\bn=0$ and $\bn\CROSS(\pols(\bv)\bn)=\bzero$; (iii) and the
homogeneous Neumann boundary condition
$\pols(\bv)\bn_{|\front} = \bzero$.  The assembling is done in two
steps: (1) one first assembles the system with homogeneous Neumann
boundary conditions; (2) the correct boundary conditions are
implemented by post-processing the linear system.  After the essential
boundary conditions are enforced, and once this linear system is
solved (see \S\ref{Sec:linear_algebra}), the velocity and the momentum
are updated as follows:
\begin{equation}
  \label{vel_parabolic_discrete} \bsfV_i^{n+1} \eqq 2
\bsfV_i^{n+\frac{1}{2}} - \bsfV_i^{n},\qquad \bsfM_i^{n+1} \eqq
\varrho^{n+1}_i\bsfV_i^{n+1},\qquad \forall i\in\calV.
  \end{equation}

\subsection{Internal energy and total energy update}
\label{Subsec:internal_energy_update} The second step of the parabolic
solve consists of computing the specific internal energy (or
temperature) and updating the total energy. Before going into the
details, we discuss the boundary condition. We consider two types of
boundary conditions: (i) Dirichlet:
$T_{|\front}=T\upbnd$; (ii) homogeneous Neumann:
$\partial_n T = 0$. The assembling is done in two steps: (1) one first
assembles the system with homogeneous Neumann boundary conditions; (2)
the Dirichlet boundary conditions are implemented by post-processing
the linear system.

Here again we use the Crank-Nicolson technique for the time stepping
in \eqref{parabolic_internal_energy_simplified}, and the approximation
in space is done by using the Galerkin technique with the lumped mass matrix.
First, we compute the rate of specific internal energy production
caused by the viscous stress
\begin{equation}
  \label{def_Ki_nplusone}
  \sfK_i\upnph\eqq \frac{1}{m_i}\int_\Dom \pols(\bv_h\upnph) {:}
  \pole(\bv_h\upnph) \varphi_i \diff x,
  \qquad \forall i\in\calV.
\end{equation}
Second, we compute the specific internal energy of $\bu_h^n$ by setting set
$\sfe_i^{n}\eqq (\varrho^{n}_i)^{-1} \sfE_i^n -
\tfrac12\|\bsfV_i^{n}\|_{\ell^2}^2$ for all $i\in\calV$. Note that
$\sfe_i^{n}>0$ since we assumed that $\bu_h^n$ is admissible. Then, we seek
the specific internal energy, $e_h{\upnph}=
\sum_{i\in\calV}\sfe_i{\upnph}\varphi_i$, such that the following holds:
\begin{align}
  \label{high_int_energy_CN}
  & m_i \varrho_i^{n}(\sfe_i{\upnph} - \sfe_i^{n})+\tfrac12\dt_n
  b(e_h\upnph,\varphi_i)
  = \tfrac12 \dt_n m_i\sfK_i^{n+\frac{1}{2}}, \qquad \forall i\in \calV,
\end{align}
with $b(e,w) \eqq c_v^{-1}\kappa \int_\Dom \GRAD e\SCAL \GRAD w \diff x$.
Finally, we update the internal energy and the total energy:
\begin{equation}\label{energy_parabolic_discrete}
  \sfe_i\upnp = 2\sfe_i{\upnph} - \sfe_i^n, \qquad \sfE_i^{n+1} =
  \varrho_i^{n+1}\sfe_i^{n+1} + \tfrac12\varrho^{n}_i
  \|\bsfV_i^{n+1}\|_{\ell^2}^2,\qquad
  \forall i\in \calV.
\end{equation}

The algorithm is second-order accurate in time, but there
is no guarantee that the internal energy stays positive
because the Crank-Nicolson scheme is not positivity preserving. If it
happens that $\min_{i\in\calV} \sfe_i^{n+1}<0$, then limiting must be
applied. This is done as described in
\citep[\S5.3]{Guermond_Maier_Popov_Tomas_CMAME_2020}. We briefly
recall the technique. We compute a first-order,
invariant-domain-preserving (\ie positive) update of the internal
energy by seeking $e_h\upLnp=\sum_{i\in\calV} \sfe\upLnp \varphi_i$ so
that the following holds:
\begin{equation} \label{low_order_update_internal_energy}
  m_i \varrho_i^{n}(\sfe_i{\upLnp} - \sfe_i^{n})+ \dt_n
  b(e_h\upLnp,\varphi_i)
  = \tfrac12 \dt_n m_i\sfK_i^{n+\frac{1}{2}}, \qquad \forall i\in \calV.
\end{equation}
Let $e_h\upHnp\eqq \sum_{i\in\calV} \sfe\upHnp \varphi_i$ be the solution to \eqref{high_int_energy_CN}.
Subtracting \eqref{low_order_update_internal_energy} from \eqref{high_int_energy_CN} yields
\begin{align}
  &m_i \varrho_i^{n}(\sfe_i\upHnp - \sfe_i\upLnp) =
  \sum_{j\in\calI^*(i)}A_{ij},\\
&A_{ij}\eqq -\tfrac12\dt_n b(\varphi_j,\varphi_i) (\sfe_j\upHnp+
\sfe_j^{n} - 2 \sfe_j\upLnp -\sfe_i\upHnp -\sfe_i^{n} +2
\sfe_i\upLnp). \label{Aij_for_internal_energy}
\end{align}
The standard FCT limiting can be applied by setting
$m_i \varrho_i^{n}(\sfe_i\upHnp - \sfe_i\upLnp) =
\sum_{j\in\calI^*(i)} \limiter_{ij}A_{ij}$, see \eg
\citep{Boris_books_JCP_1973,Zalesak_1979,KuzminLoehnerTurek2004}. The
reader is referred to
\citep[\S5.3]{Guermond_Maier_Popov_Tomas_CMAME_2020} for the
computation of $\limiter_{ij}$. Once the internal energy is updated, the total energy can also be
updated by setting
\begin{equation}
  \label{energy_parabolic_discrete}
  \sfE_i^{n+1} = \varrho_i^{n+1}\sfe_i^{n+1} +
  \tfrac12\varrho^{n}_i \|\bsfV_i^{n+1}\|_{\ell^2}^2,\qquad \forall i\in \calV.
\end{equation}

The main properties of the method presented here are collected in the following statement.
\begin{lemma}[Positivity and conservation]
  \label{Lemma:parabolic_conservation} Let $\bsfU^{n}$ be an
admissible state. Let $\bsfU^{n+1}$ be the state constructed in the
parabolic substep.  Then, $\bsfU^{n+1}$ is an admissible state, \ie
$\bsfU_i^{n+1}\in\calA$ for all $i\in \calV$ and all $\dt_n$, and the
following holds for all $i\in\calV$ and all $\dt_n$:
  \begin{equation} \varrho_i^{n+1}= \varrho_i^n > 0, \qquad \qquad
\min_{j\in\calV} \sfe_j^{n+1}\ge \min_{j\in\calV} \sfe_j^{n} > 0,
\qquad \forall i\in\calV,
 \end{equation} Assume that the slip or the no-slip boundary condition
is enforced on the velocity everywhere on $\front$.  Assume that the
homogeneous Neumann boundary condition is enforced on the internal
energy everywhere on $\front$.  Then the following holds true for all
$\dt_n$:
 \begin{equation} \sum_{i\in\calV} m_i \sfE_i^{n+1} = \sum_{i\in\calV}
m_i \sfE_i^{n} + \sum_{i\in\calV} \dt m_i\bsfF_i^{n+\frac12}\SCAL
\bsfV_i^{n+\frac{1}{2}}.
      \label{eq2:lem:parabolic_energy}
  \end{equation}
\end{lemma}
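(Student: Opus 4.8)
The plan is to establish the three assertions in turn, doing positivity first since the energy balance reuses its algebraic structure. Two structural facts drive the argument: the viscous production $\sfK_i\upnph$ is pointwise nonnegative, and the \emph{same} bilinear form governs the dissipation that leaves the kinetic energy in the momentum update and the dissipation that enters the internal energy in \eqref{high_int_energy_CN}, so the two contributions can be matched and cancelled in the total-energy sum.

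\emph{Density and positivity.} Invariance and positivity of the density are immediate from $\varrho_i\upnp=\varrho_i^n>0$. For the internal energy I would first note $\sfK_i\upnph\ge0$: decomposing $\pole(\bv)$ into deviatoric and spherical parts gives $\pols(\bv){:}\pole(\bv)\ge0$ whenever $\mu>0$ and $\lambda\ge0$, so the integrand in \eqref{def_Ki_nplusone} is nonnegative. Next, since the low-order operator in \eqref{low_order_update_internal_energy} is monotone by construction (nonpositive off-diagonal couplings $b(\varphi_j,\varphi_i)\le0$ for $j\ne i$) and has the zero-row-sum property $\sum_j b(\varphi_j,\varphi_i)=0$, I would evaluate \eqref{low_order_update_internal_energy} at a node $i_0$ where $\sfe\upLnp$ attains its minimum; the coupling term is then nonpositive, $\sfK_{i_0}\upnph\ge0$, and one reads off $\sfe_{i_0}\upLnp\ge\sfe_{i_0}^n$, whence $\min_j\sfe_j\upLnp\ge\min_j\sfe_j^n>0$. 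The FCT correction preserves this lower bound, because the limiters are chosen so that $\sfe_i\upnp$ never falls below the positive low-order value. Thus $\min_j\sfe_j\upnp\ge\min_j\sfe_j^n>0$, and since a direct computation using $\varrho_i\upnp=\varrho_i^n$ and $\bsfM_i\upnp=\varrho_i\upnp\bsfV_i\upnp$ gives $e(\bsfU_i\upnp)=\sfe_i\upnp$, we obtain $\bsfU_i\upnp\in\calA$.

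\emph{Energy balance.} I would split $\sum_i m_i\sfE_i\upnp=\sum_i m_i\varrho_i^n\sfe_i\upnp+\tfrac12\sum_i m_i\varrho_i^n\|\bsfV_i\upnp\|_{\ell^2}^2$ and treat the two parts separately. For the kinetic part, rewrite \eqref{mt_parabolic_discrete} in full-step form using $\bsfM_i^n=\varrho_i^n\bsfV_i^n$ and $\bsfV_i\upnph=\tfrac12(\bsfV_i\upnp+\bsfV_i^n)$, then test against $\bv_h\upnph$ and sum over all nodes and components. The viscous term collapses to the total dissipation,
\[
  a(\bv_h\upnph,\bv_h\upnph)=\int_\Dom\pols(\bv_h\upnph){:}\pole(\bv_h\upnph)\diff x=\sum_{i\in\calV}m_i\sfK_i\upnph,
\]
using $\sum_i\varphi_i=1$ and \eqref{def_Ki_nplusone}, while $(\bsfV\upnp-\bsfV^n)\SCAL\tfrac12(\bsfV\upnp+\bsfV^n)=\tfrac12(\|\bsfV\upnp\|_{\ell^2}^2-\|\bsfV^n\|_{\ell^2}^2)$ telescopes the left-hand side, yielding
\[
  \tfrac12\sum_i m_i\varrho_i^n\|\bsfV_i\upnp\|_{\ell^2}^2=\tfrac12\sum_i m_i\varrho_i^n\|\bsfV_i^n\|_{\ell^2}^2-\dt_n\sum_i m_i\sfK_i\upnph+\dt_n\sum_i m_i\bsfF_i\upnph\SCAL\bsfV_i\upnph.
\]
For the internal part, summing \eqref{high_int_energy_CN} over $i$ kills the diffusion term, $\sum_i b(e_h\upnph,\varphi_i)=b(e_h\upnph,1)=0$, by the partition of unity together with the homogeneous Neumann condition; the limiting does not change the sum since $\sum_{i}\sum_{j\in\calI^*(i)}\limiter_{ij}A_{ij}=0$ by the antisymmetry $A_{ij}=-A_{ji}$ (a consequence of the symmetry of $b$). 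With $\sfe_i\upnp=2\sfe_i\upnph-\sfe_i^n$ this gives
\[
  \sum_i m_i\varrho_i^n\sfe_i\upnp=\sum_i m_i\varrho_i^n\sfe_i^n+\dt_n\sum_i m_i\sfK_i\upnph.
\]
Adding the two identities, the dissipation terms $\pm\dt_n\sum_i m_i\sfK_i\upnph$ cancel and $\varrho_i^n\sfe_i^n+\tfrac12\varrho_i^n\|\bsfV_i^n\|_{\ell^2}^2=\sfE_i^n$ recovers \eqref{eq2:lem:parabolic_energy}.

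\emph{Main obstacle.} The two delicate points are the discrete minimum principle for the low-order internal-energy step, which rests on the monotonicity of its diffusion operator, and the boundary bookkeeping that makes the half-step testing produce neither boundary work in the momentum balance nor a boundary flux in $\sum_i b(e_h\upnph,\varphi_i)$. It is exactly to secure these cancellations — and hence the clean exchange of the viscous dissipation $\sum_i m_i\sfK_i\upnph$ between kinetic and internal energy — that the slip/no-slip hypothesis on the velocity and the homogeneous Neumann hypothesis on the internal energy are imposed.
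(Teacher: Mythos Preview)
Your argument is correct and follows the natural route: nonnegativity of the viscous production $\sfK_i\upnph$ via the deviatoric/spherical decomposition, a discrete minimum principle for the backward-Euler low-order energy step, FCT to transfer the bound to the high-order update, and the Crank--Nicolson polarization identity to telescope the kinetic energy so that the dissipation $\sum_i m_i\sfK_i\upnph$ cancels between the kinetic and internal balances. The paper itself gives no proof of this lemma; it simply refers to Theorem~5.5 of the companion article \cite{Guermond_Maier_Popov_Tomas_CMAME_2020}, so there is nothing in the present paper to compare your approach against. Your sketch is in fact the standard argument used in that reference.

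One point worth tightening: the minimum-principle step relies on $b(\varphi_j,\varphi_i)\le0$ for $j\ne i$, i.e., the stiffness matrix being an M-matrix. This holds for $\polQ_1$ on axis-aligned rectangular meshes but can fail on general quadrilaterals or hexahedra; you correctly flag monotonicity as delicate, but you should state the hypothesis explicitly rather than assert it ``by construction.'' Likewise, your handling of the boundary terms in the kinetic balance is right in spirit, but since the paper assembles \eqref{mt_parabolic_discrete} first with homogeneous Neumann data and only afterward enforces slip or no-slip by modifying the linear system, the testing-against-$\bv_h\upnph$ step deserves one more sentence: for no-slip nodes $\bsfV_i\upnph=\bzero$ so those rows contribute nothing, and for slip nodes the normal component vanishes while the tangential Neumann condition $\bn\CROSS(\pols(\bv)\bn)=\bzero$ is the natural boundary condition already encoded in $a(\cdot,\cdot)$, so no boundary work appears.
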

\begin{proof}
  See \citep[Thm.~5.5]{Guermond_Maier_Popov_Tomas_CMAME_2020}.
\end{proof}


%

\subsection{Matrix-free geometric multigrid} \label{Sec:linear_algebra}

For the \texttt{deal.II}-based finite element implementation discussed
in \S\ref{Sec:validation_and_benchmarks} (see
\citep{maier2021ryujin}), the linear
systems~\eqref{mt_parabolic_discrete} and \eqref{high_int_energy_CN}
are solved iteratively using matrix-free operator
evaluations~\citep{Kronbichler2012}. The \emph{action} of the matrix
on a vector is implemented by redundantly computing the information
contained in the stencil on the fly through the finite element
integrals. On modern hardware, computations are less expensive than
data movement~\citep{Fischer2020,Kronbichler2018}, making a
matrix-free evaluation several times faster than a sparse-matrix
vector product due to the reduced memory traffic. This is especially
relevant for the vector-valued velocity: the block-structured
matrix-vector multiplication couples the velocity components, whereas
the cell-wise integrals in the matrix-free evaluation only couple
these components at the quadrature-point level without additional data
transfer. Besides yielding faster matrix-vector products, the
matrix-free approach also avoids the cost of matrix assembly. The
higher arithmetic intensity in the matrix-free evaluation is leveraged
by cross-element SIMD vectorization of the relevant
operations~\citep{Kronbichler2012,Kronbichler2019b}.

Regarding the selection of the iterative solvers, we choose among two
options. When the cell-based Reynolds number using the diameter of the
cells as length scale is above one, the mass matrix contribution
in~\eqref{parabolic_momentum_simplified} dominates due to the limit imposed
by the hyperbolic problem on the time step. This argument also holds for
the internal energy equation provided the Prandtl number is not too large.
In that case, the diagonal mass matrix is an optimal preconditioner for the
conjugate gradient algorithm, giving iteration counts below 10. The solver
can be further tuned for data locality on high-performance CPUs as
described in \cite{Kronbichler2021}. If the mesh becomes very fine for a
given viscosity level, the elliptic contributions become dominant instead.
Then, we equip the conjugate gradient solver with a geometric multigrid
preconditioner that steps into successively coarser levels
$l\in\intset{1}{L}$ where $l=1$ refers to the coarsest mesh and $l=L$
refers to the finest mesh, see \cite{Kronbichler2018} and
\cite{Clevenger2021} for details on the parallel scaling and performance. A
Chebyshev iteration of degree three (\ie three matrix-vector products)
around the point-Jacobi method is used for pre- and post-smoothing, using
parameters to smooth in a range $[0.08\hat{\lambda}_\text{max},
1.2\hat{\lambda}_\text{max}]$ with the maximal eigenvalue estimate
$\hat{\lambda}_{\text{max}}$ computed every four time steps by a Lanczos
iteration with 12 iterations. In order to improve parallel scaling of the
V-cycle, we limit the coarsening to the mesh levels
$l\in\intset{L_\text{min}}{L}$, with $L_\text{min}$ such that the
cell-based Reynolds number exceeds unity. On the coarse level, a few
iterations suffice to solve the system accurately, which is done by a
Chebyshev iteration aiming to reduce the residual by a factor of $10^3$
according to the a-priori error estimate. The multigrid solver typically
takes 3 to 5 iterations to converge. To increase the throughput, the
multigrid V-cycle is run in single
precision~\citep{Gropp2000,Kronbichler2019}.


\section{Verification, validation and
  benchmarks} \label{Sec:validation_and_benchmarks} The method
described above has been implemented in a code called \texttt{ryujin}
which is freely available
online\footnote{\url{https://github.com/conservation-laws/ryujin}}
\citep{maier2021ryujin} under a permissible open source
license.\footnote{\url{https://spdx.org/licenses/MIT.html}} This code
is based on the finite element library \texttt{deal.II}
\citep{dealII92,dealIIcanonical} and uses mapped continuous $\polQ_1$
finite elements.
We discuss in this section a number of verification and benchmark
configurations to demonstrate that the algorithm described herein is
robust, accurate and scalable. In particular, we use a 2D shocktube
configuration proposed by \cite{Daru_Tenaud_2000, Daru_Tenaud_2009} to
demonstrate grid convergence; see
Section~\ref{subse:shocktube}. Following \citep{Daru_Tenaud_2020} and
to allow for rigorous quantitative comparisons with other research
codes, test vectors obtained from our computation of extrema of the
skin friction coefficient are made freely available
\citep{testvectors_2021}. We then demonstrate that the method can
reliably predict pressure coefficients on the well-studied
supercritical airfoil Onera OAT15a \citep{Deck_2005} in the
supercritical regime at Mach 0.73 in three-dimensions; see
Section~\ref{subse:airfoil}. Finally, a series of synthetic benchmarks
are presented to assess the performance of the compute-kernel and the
strong and weak scalability of our implementation
(Section~\ref{subse:scaling}).

\subsection{Verification}
\label{Sec:validation}
The hyperbolic kernel and the parabolic kernel of \texttt{ryujin} have
been been verified on various analytical solutions to ensure the
correctness of the implementation; see \eg
\citep{MaierTomas2020,maier2020massively}. We now demonstrate the
correctness of the full algorithm in 2D on a viscous shockwave problem
that has an exact solution described in \cite{Becker_1922}, see also
\citep[\S7.2]{Guermond_Maier_Popov_Tomas_CMAME_2020} and
\cite{Johnson_JFM_2013}.
With the parameters given in \citep[\S7.2;
Eqs.~(7.1)-(7.4)]{Guermond_Maier_Popov_Tomas_CMAME_2020}, we
approximate the Becker solution on a mesh sequence
$\calH$ of successively refined uniform meshes. The computational
domain is the unit square with Dirichlet boundary conditions on the
left and right boundaries, and periodic boundary conditions on the
upper and lower boundaries. We slightly deviate from
\citep{Guermond_Maier_Popov_Tomas_CMAME_2020} by choosing the velocity of the Galilean frame
to be $v_\infty=0.125$ and choosing the CFL
number 0.3. The source code and the parameter files are archived on
the online platform \emph{Zenodo}; see
\citep{maier2021ryujin,testvectors_2021}. As evidenced in
Table~\ref{tab:becker-validation}, we observe second-order convergence
in space and time in the maximum norm.
\begin{table}[H]\small \centering
  \nprounddigits{2}
  \begin{tabular}{rcccccc}
    \toprule
    gridpoints  & $\delta_1$  & rate & $\delta_2$  & rate & $\delta_\infty$ & rate \\[0.5em]
    4225        & \numprint{4.68183e-3}   & --   & \numprint{5.96283e-3}  & --   & \numprint{1.28885e-2}    & --   \\
    16641       & \numprint{3.87919e-4}   & 3.59 & \numprint{9.33675e-5}  & 2.68 & \numprint{2.89148e-3}    & 2.16 \\
    66049       & \numprint{8.7326e-5}    & 2.15 & \numprint{2.34604e-5}  & 1.99 & \numprint{8.0178e-4}     & 1.85 \\
    263169      & \numprint{2.16887e-5}   & 2.01 & \numprint{5.89731e-5}  & 1.99 & \numprint{2.09718e-4}    & 1.93 \\
    \bottomrule
  \end{tabular}
  \caption{Convergence study of the approximation of the Becker
    solution \citep{Becker_1922} on a mesh sequence $\calH$ of successively
    refined uniform meshes with CFL=0.3. The columns labeled
    ``$\delta_p$'' show the $L^p$-norm of the consolidated  error
    \citep[Eq.~(7.4)]{Guermond_Maier_Popov_Tomas_CMAME_2020}.}%
  \label{tab:becker-validation}%
\end{table}

\subsection{Non-reflecting conditions} \label{Subsec:non_reflecting_boundary_conditions} We now evaluate the
performance of the non-reflecting boundary conditions described in
\S\ref{Subsec:non_reflecting_bcs} by using a series of tests proposed
in \cite{Fosso_2012}. We solve the Euler equations
in the domain $\Dom\eqq [-1,1]^2$ with the initial data
\begin{align}
\rho_0(\bx) &= \rho_\infty, \\
\bv_0(\bx) &= \bv_\infty + \overline v_\infty r_0^{-1} \psi(\|\bx - \bx_0\|_{\ell^2})
\polA (\bx - \bx_0),\\
p_0(\bx) & = p_\infty - \tfrac12 \rho_\infty\overline v_\infty^2
\psi^2(\|\bx - \bx_0\|_{\ell^2}),
\end{align}
with $\bv_\infty=(v_\infty,0)\tr$,
$\psi(r) \eqq e^{\frac12(1-\frac{r^2}{r_0^2})}$ and
$\polA \eqq \big(\begin{smallmatrix}0 & -1 \\1 &
  0\end{smallmatrix}\big)$.
We take $v_\infty=1$ and $\rho_\infty=1$. We use the velocity
perturbation $\overline v_\infty$ and the Mach number $M_\infty$ as
parameters. The pressure $p_\infty$ is defined to be
$\frac{\rho_\infty}{\gamma} a_\infty^2$ where
$a_\infty \eqq \frac{v_\infty}{M_\infty}$ is the sound speed. Four
cases are considered: (i) $M_\infty=0.5$, $\overline v_\infty=0.75$;
(ii) $M_\infty=0.5$, $\overline v_\infty=0.25$; (iii) $M_\infty=0.05$,
$\overline v_\infty=0.75$; (iv) $M_\infty=0.05$,
$\overline v_\infty=0.25$.
The simulations are done on a $80\CROSS 80$ mesh.
We enforce the non-reflecting boundary condition on the four sides of
the domain. We test the Riemann solution technique described in
\S\ref{Sec:Godunov} and the method based on the characteristic
variables described in \S\ref{Sec:Characteritic_variables}.
All the tests are done with $\text{CFL}=0.75$.
\begin{figure}[h]
\subfloat[$M_\infty=0.5$, $\overline v_\infty=0.75$. Left: $\delta_1(t)$; Right: $\delta_2(t)$]{%
  \includegraphics[width=0.24\textwidth]{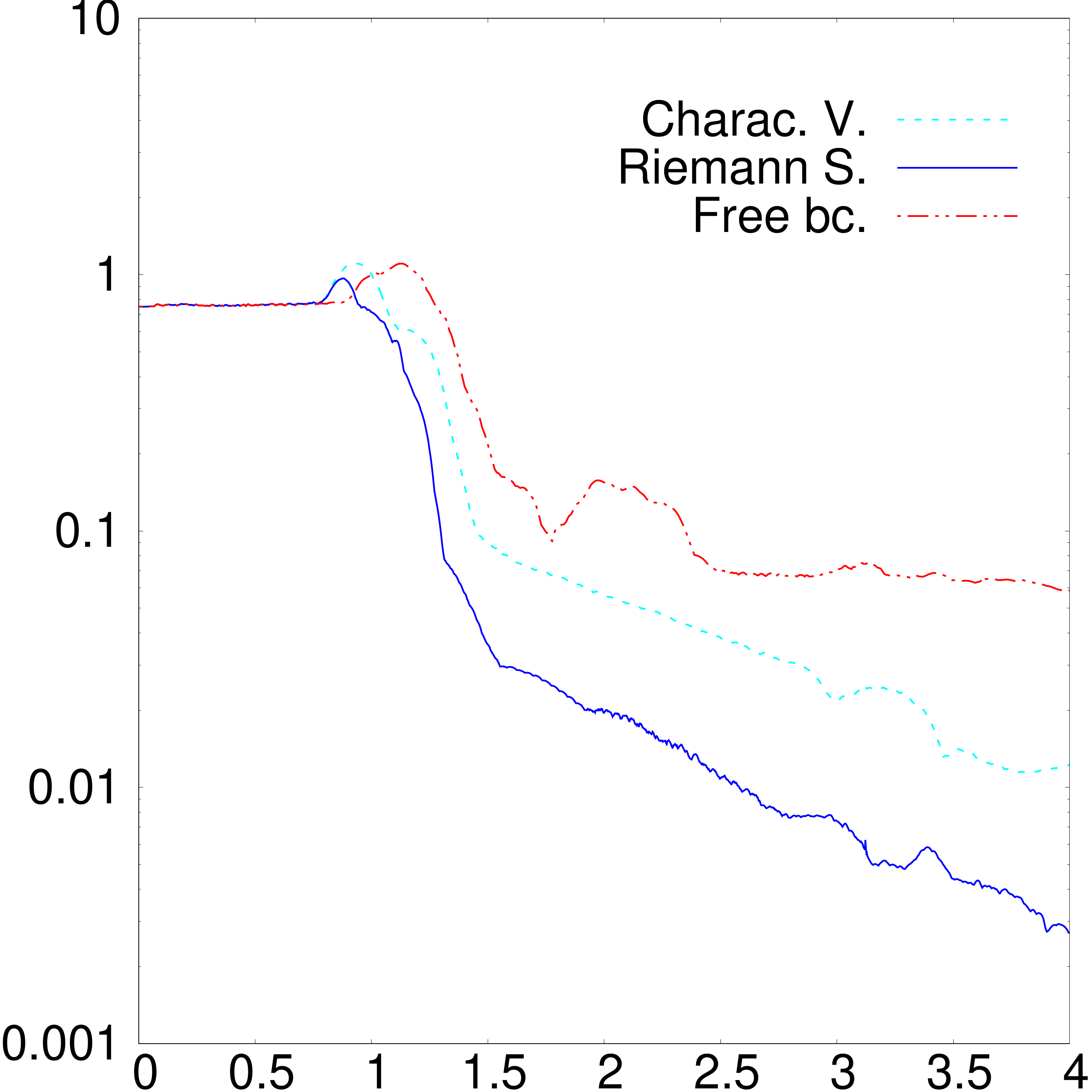}\hfil
\includegraphics[width=0.24\textwidth]{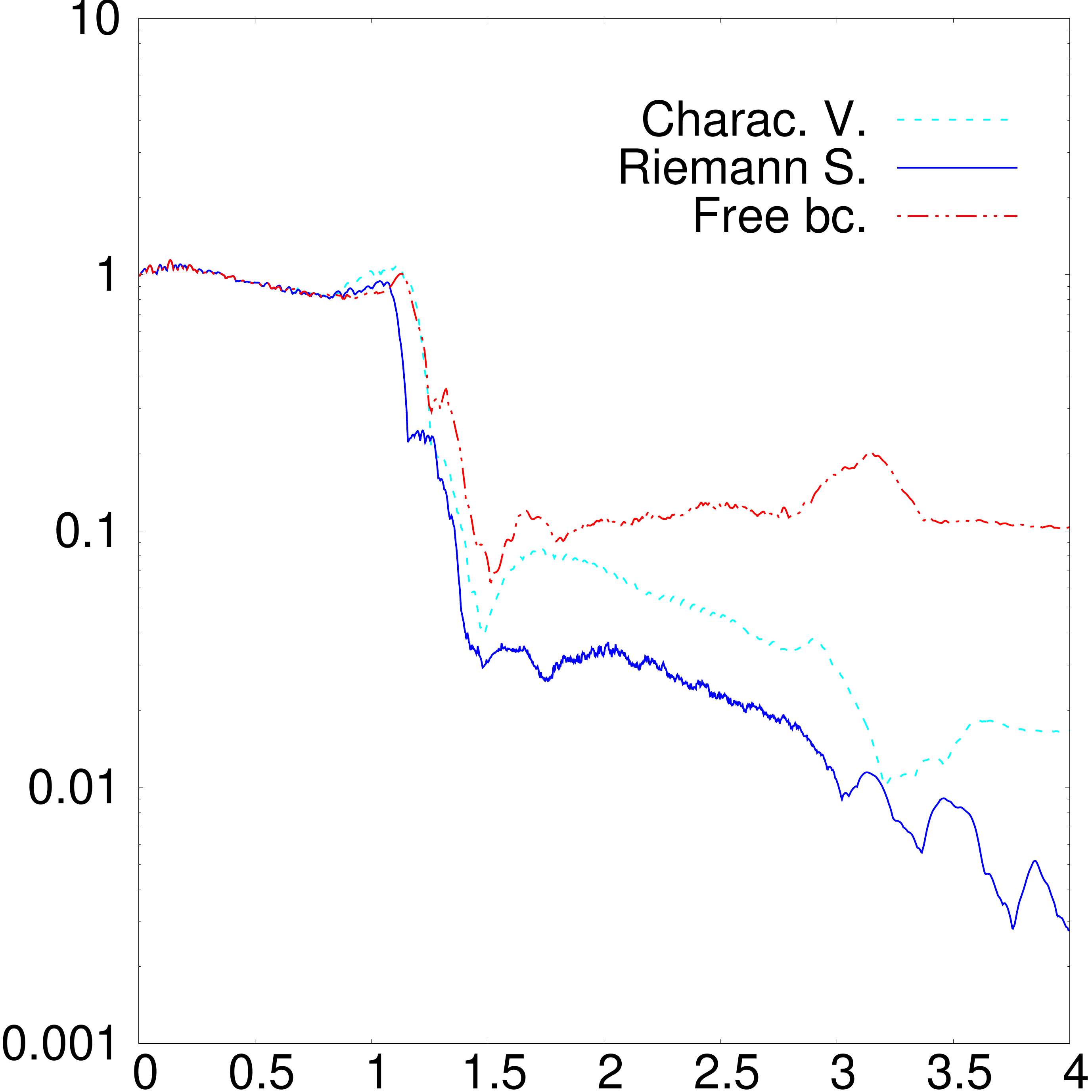}}\hfil
\subfloat[$M_\infty=0.5$, $\overline v_\infty=0.25$. Left: $\delta_1(t)$; Right: $\delta_2(t)$]{%
  \includegraphics[width=0.24\textwidth]{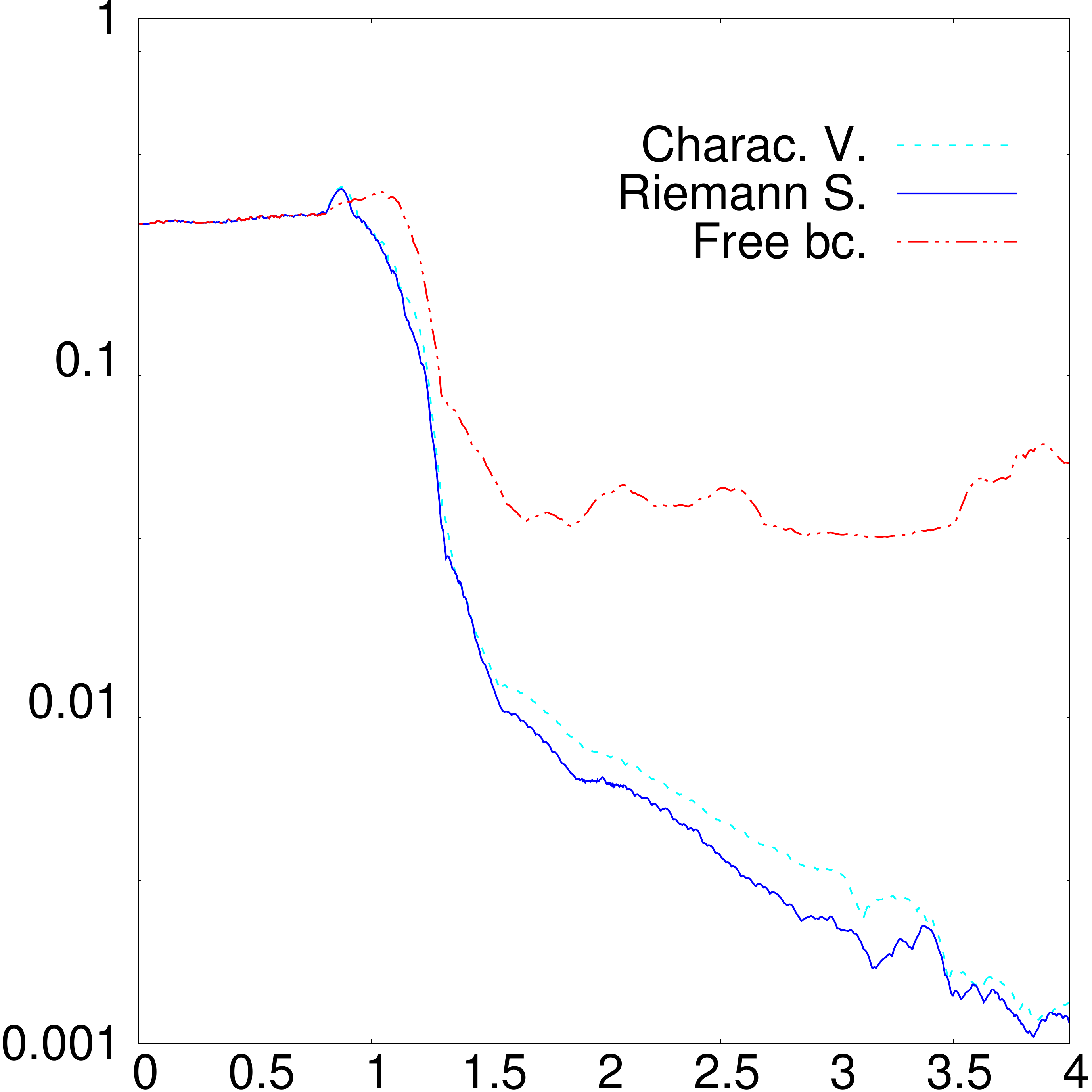}\hfil
\includegraphics[width=0.24\textwidth]{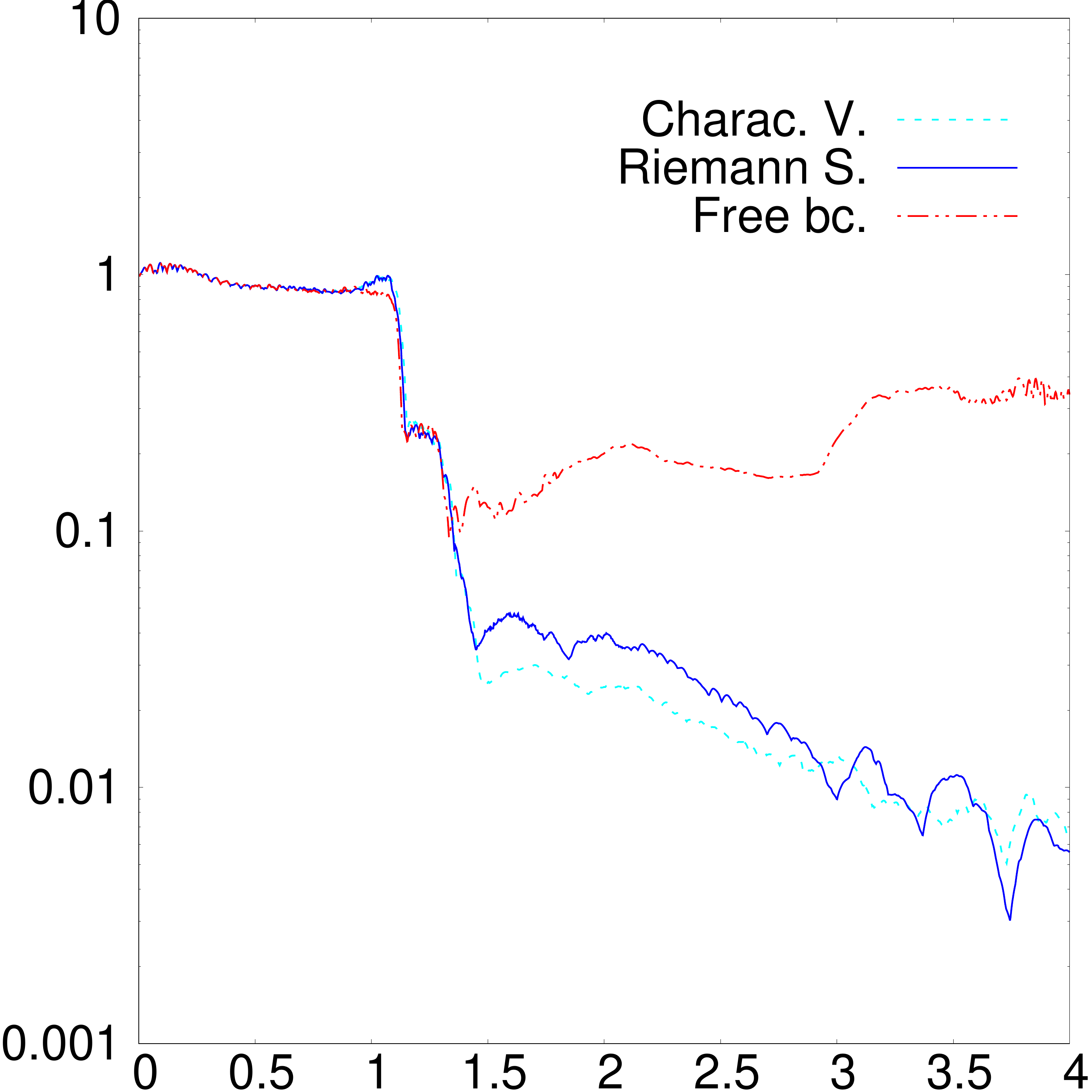}} \\
\subfloat[$M_\infty=0.05$, $\overline v_\infty=0.75$. Left: $\delta_1(t)$; Right: $\delta_2(t)$]{%
  \includegraphics[width=0.24\textwidth]{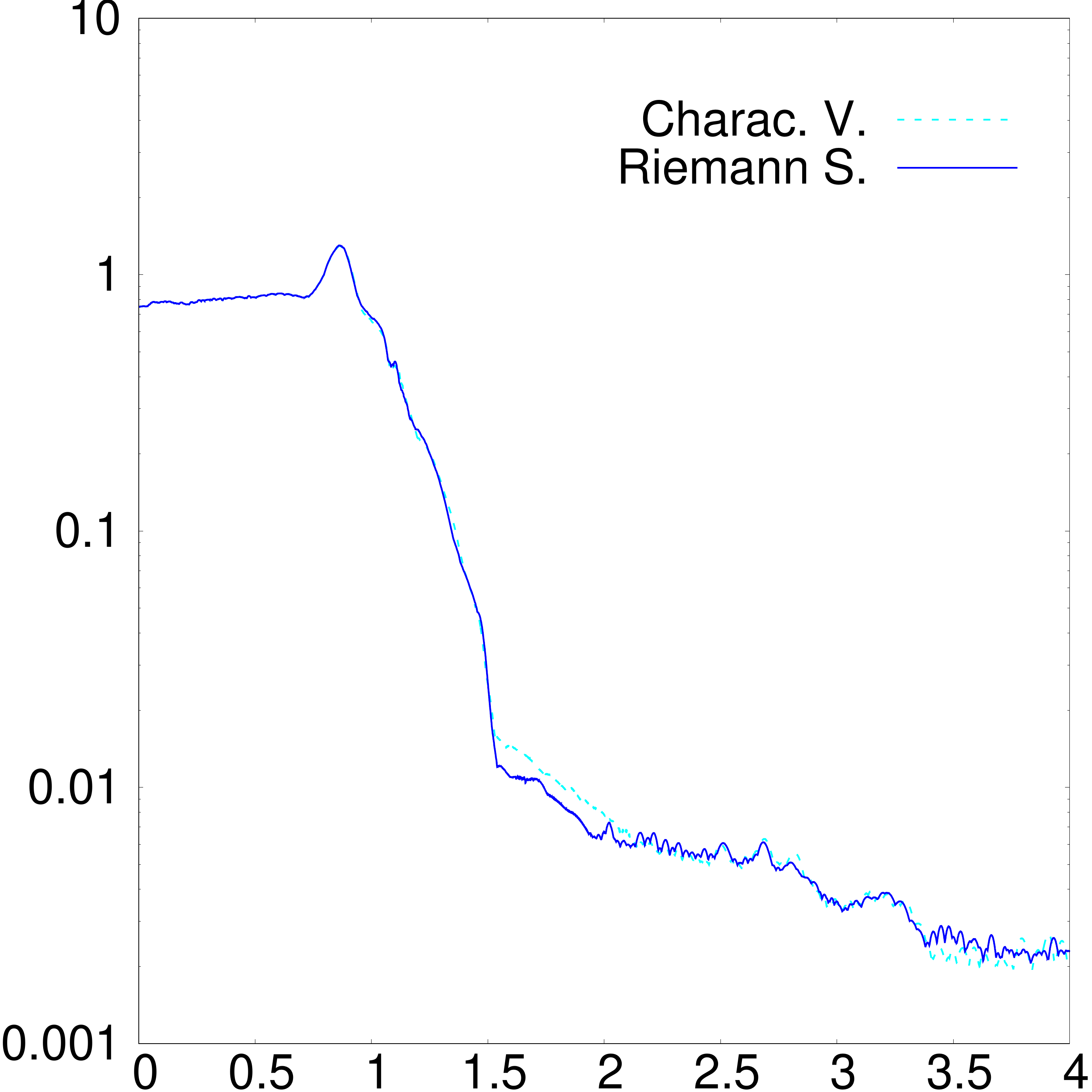} \hfil
\includegraphics[width=0.24\textwidth]{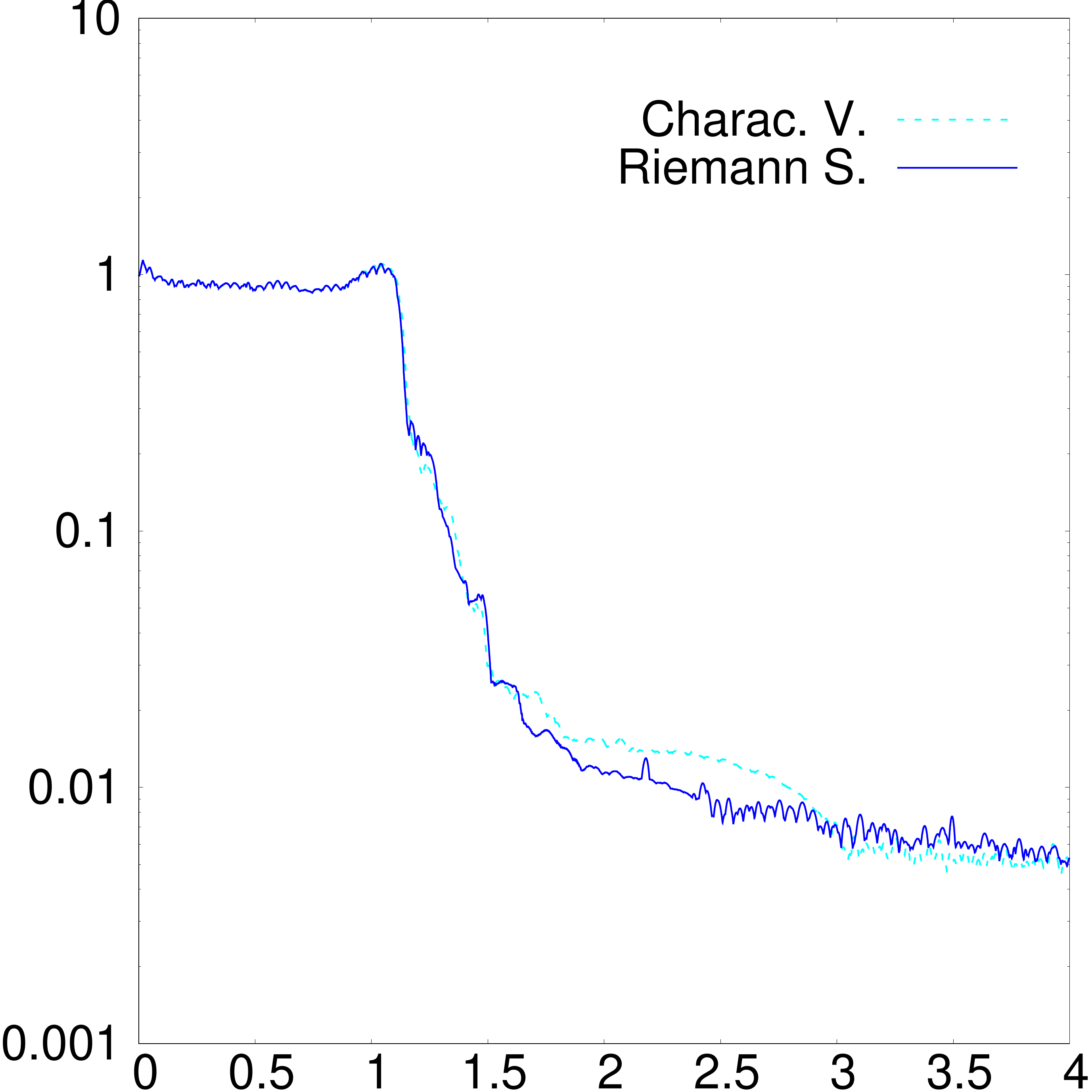}} \hfil
\subfloat[$M_\infty=0.05$, $\overline v_\infty=0.25$. Left: $\delta_1(t)$; Right: $\delta_2(t)$]{%
  \includegraphics[width=0.24\textwidth]{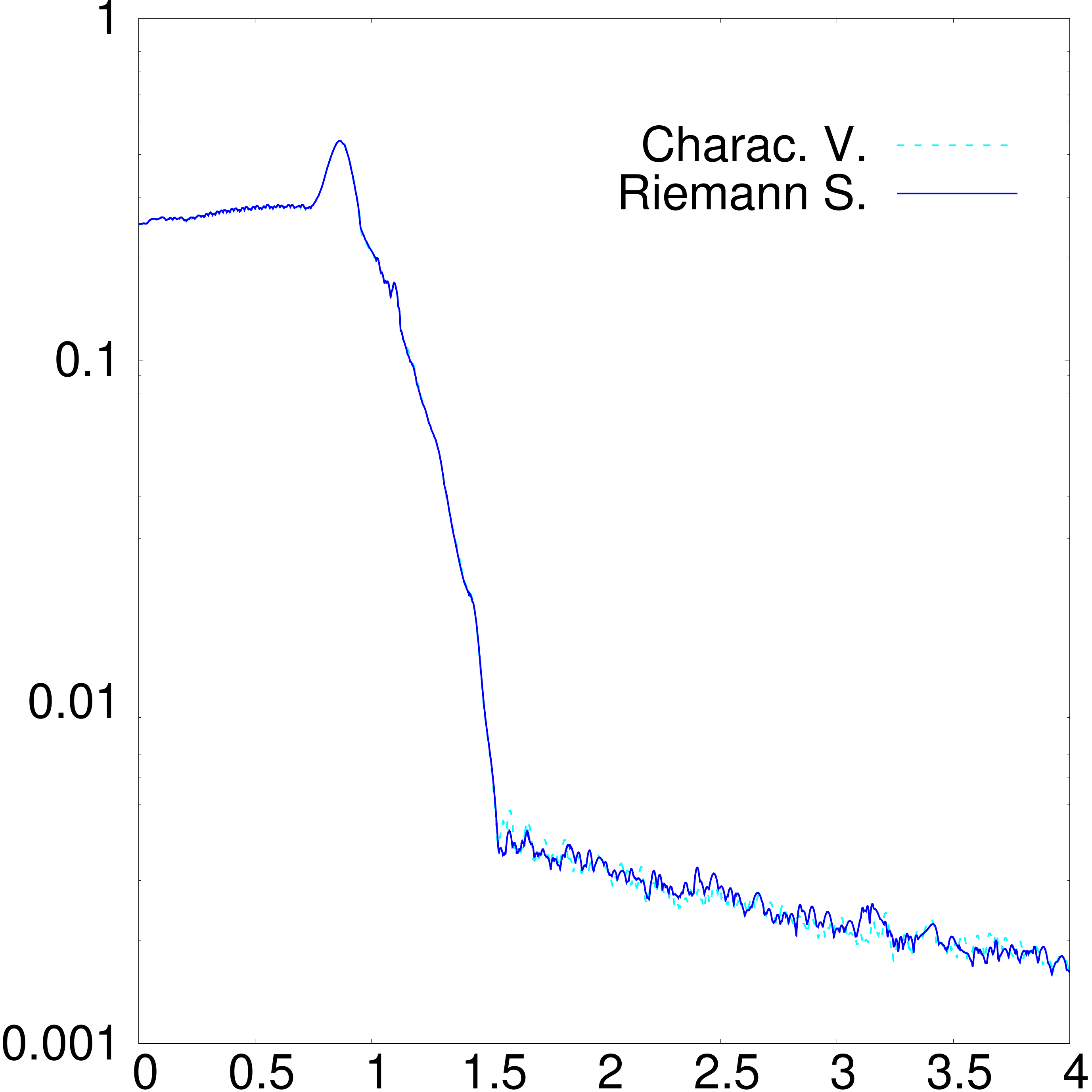} \hfil
\includegraphics[width=0.24\textwidth]{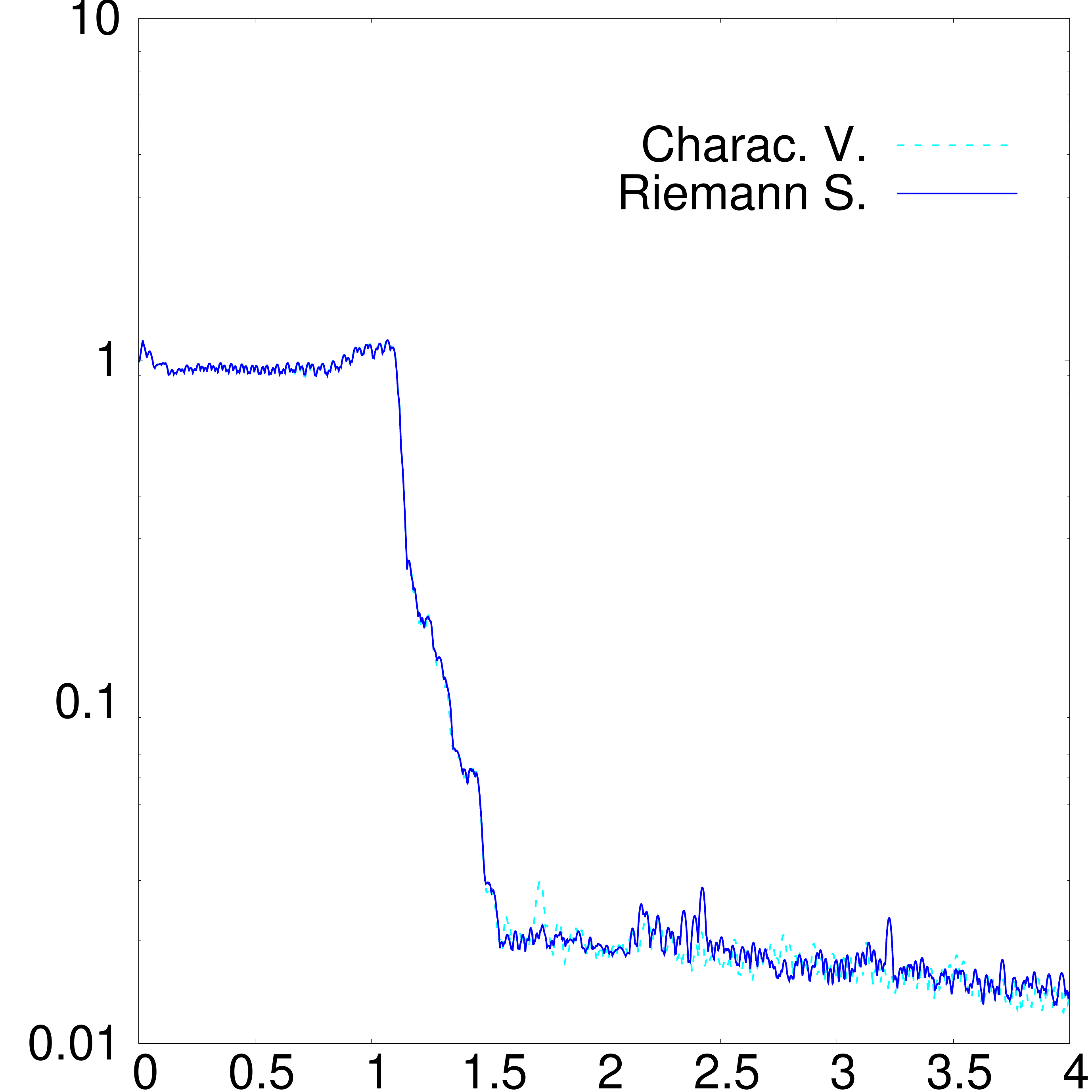}}
\caption{Tests on the non-reflecting boundary conditions.} \label{Fig:Poinsot_test}
\end{figure}
We show in Figure~\ref{Fig:Poinsot_test} the quantities
$\delta_1(t)\eqq
\frac{\|\bv(\cdot,t)-\bv_\infty\|_{\bL^\infty(\Dom)}}{\|\bv_\infty\|_{\bL^\infty(\Dom)}}$
and
$\delta_2(t)\eqq
\frac{\|\ROT\bv(\cdot,t)\|_{\bL^\infty(\Dom)}}{\|\ROT\bv_0\|_{\bL^\infty(\Dom)}}$
as functions of time over the time interval $[0,4]$.  The label
``Riemann S.\@'' refers to the method from \S\ref{Sec:Godunov} and the
label ``Charac.\@ V.\@'' refers to the method from
\S\ref{Sec:Characteritic_variables}.
Since the center of the vortex crosses the outflow boundary
at $t=1$, the faster $\delta_1$ and $\delta_2$ go to zero as $t$
grows, the better the non-reflecting properties of the boundary
condition are.  We observe that the method using the exact solution to
a Riemann problem is slightly more
efficient than that using the characteristics variables when the
velocity perturbation is large ($M_\infty=0.5$,
$\overline v_\infty=0.75$).
The method using the characteristic
variables performs as well as the method using the Riemann solution in
the cases (ii)-(iii)-(iv).
Overall the Riemann solution method has properties similar to
the method labeled ``OC2'' in \cite{Fosso_2012}.

\subsection{2D shocktube benchmark}
\label{subse:shocktube}

We now illustrate the accuracy of the proposed algorithm by testing it
against a challenging two-dimensional benchmark problem introduced in
the literature by \cite{Daru_Tenaud_2000,Daru_Tenaud_2009}. The
configuration is a shocktube problem in a square cavity
$\Dom\eqq (0,1)^2$ where a shock interacts with a viscous boundary
layer. A lambda shock is formed as a result of this interaction;
see Figure~\ref{fig:shocktube}(a).
\begin{figure}[htbp]
    \setlength\fboxsep{0pt}
    \setlength\fboxrule{0.4pt}
  \begin{center}
    \subfloat[]{
     \fbox{\includegraphics[width=0.42\textwidth]{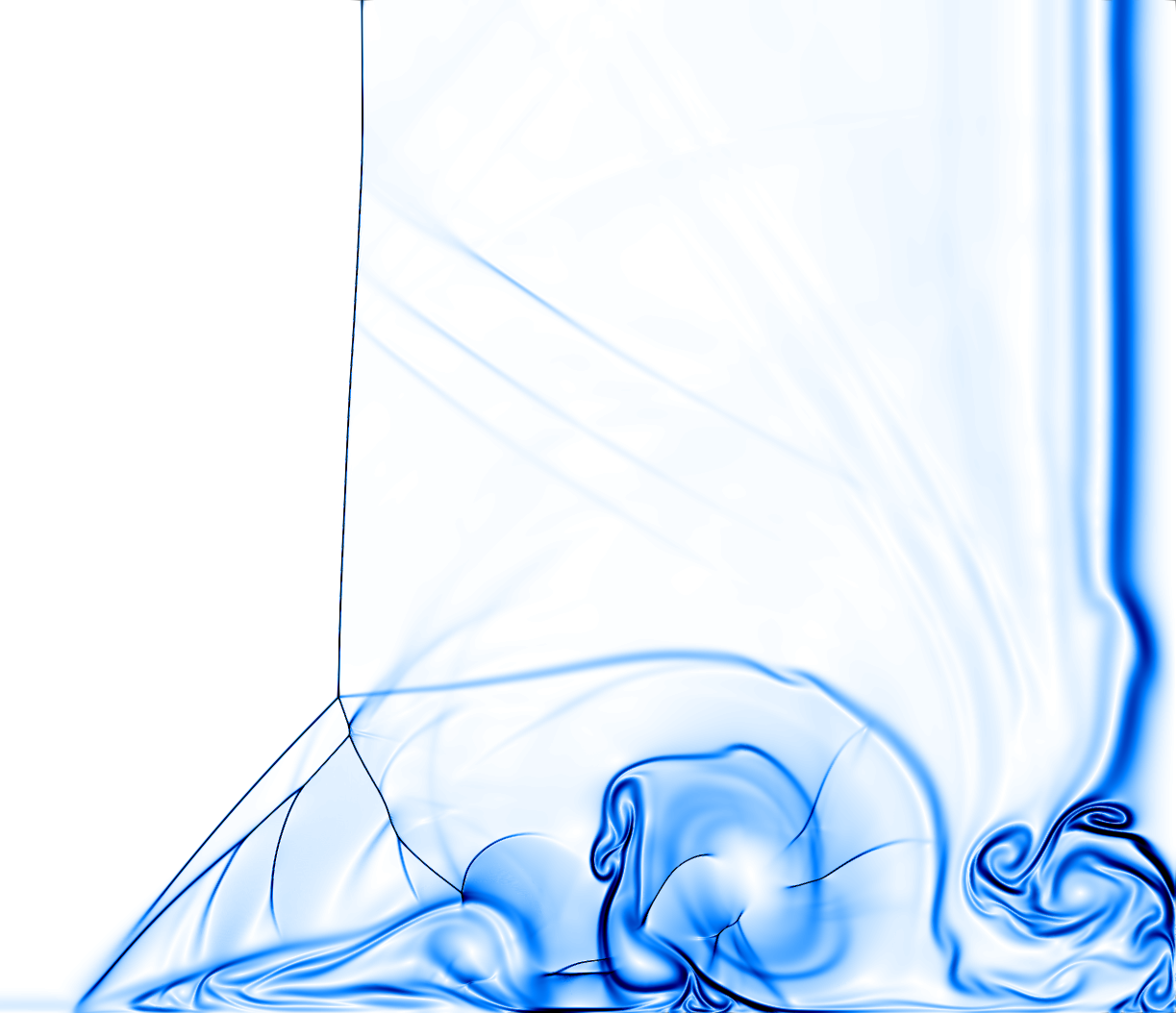}}
    }
    \subfloat[]{\input{c_f.tex}}
  \end{center}
  \caption{The 2D shocktube benchmark: (a) zoom ($[0.5,1]\times[0,0.5]$) of
    a schlieren plot at $t=1.00$ showing the interactions of the
    reflected shock wave with the viscous boundary layer. (b) postprocessed
    skin friction coefficient $C\lof$ at time $t=1.00$. The continuous green
    line is the numerical result with the finest level L14. The extrema
    are computed by means of extrapolation.
    }
  \label{fig:shocktube}
\end{figure}
The fluid is initially at rest at $t=0$. There are two different
states separated by a diaphragm at $\{x=\frac12\}$. The density,
velocity and pressure on the left-hand side of the diaphragm are
$\rho_L=120$, $v_L=0$, $p_L=\rho_L/\gamma$. The states on the
right-hand side are $\rho_R=1.2$, $v_R=0$, $p_R=\rho_R/\gamma$.  A
shock, a contact discontinuity and an expansion wave are created after
the diaphragm is broken. The viscous shock and the contact wave move
to the right, whereas the expansion wave moves to the left. A thin
viscous boundary layer is created on the bottom wall of the cavity as
the shock and the contact waves progress toward the right wall.  The
shock reaches the right wall at approximately $t\approx 0.2$, is
reflected, and moves back to the left thereafter. The contact
discontinuity remains stationary close to the right wall after it
interacted with the shock. A lambda shock is created as the shock
interacts with the viscous boundary layer.  The various mechanisms at
play in this problem are described in
\citep[\S6]{Daru_Tenaud_2000} and \citep[\S5 \&
\S6]{Daru_Tenaud_2009}.

The equation of state is $p=(\gamma-1)\rho e$ with
$\gamma=1.4$. The dynamics viscosity is $\mu=10^{-3}$ and the bulk
viscosity is set to $0$. The Prandtl number is
$Pr=\frac{\mu c_p}{\kappa}=0.73$, where $c_p=\frac{\gamma}{\gamma-1}$.
The no-slip boundary condition is enforced on the
velocity. The homogeneous Neumann
boundary condition is enforced on the internal energy, \ie the tube
is thermally insulated.  Due to symmetry, the computation is done in
the half domain $(0,1)\CROSS (0,\frac12)$, and the boundary conditions
$\bv\SCAL \bn=0$, $(\pole(\bu)\bn)\SCAL \btau=0$ are enforced at
$\{y=\frac12\}$, where $\btau$ is one of the two tangent unit vectors
along the boundary $\{y=\frac12\}$. Denoting the vertical component of
the velocity by $v_y$, the above boundary condition amounts to
enforcing $v_y=0$ and $\partial_y v_y =0$.

As in \citep{Daru_Tenaud_2000,Daru_Tenaud_2009}, we compute the skin
friction coefficient on the bottom wall $[0,1]\CROSS\{0\}$.  We
estimate this quantity for all the degrees of freedom
$i\in\calV\upbnd\los$ by using the following expression:
\begin{equation}
 C\lof(\bx_i)\eqq
  \frac{1}{\tfrac{1}{2}\rho_\infty\|\bv_\infty\|_{l^2}^2}
  \,
  \Big(\frac{1}{\tilde m^\partial_i}
  \int_{\front\los} \varphi_i\,\btau\SCAL(\pols(\bv_h)\bn) \diff s\Big)_\nu,
  \qquad
  \tilde m^\partial_i \eqq \int_{\front\los} \varphi_i \diff s.
\end{equation}
Our goal is to observe the convergence of this quantity as the meshes
are refined.  To this end, we compute the local maxima and minima of
$C\lof$ for a mesh sequence composed of successively refined
uniform meshes ranging from refinement level L\,11 (8 M grid points) to
L\,14 (512 M grid points). The results for all these grids are
  shown in Figure~\ref{fig:shocktube-detailed} in the
  Appendix~\ref{Sec:appendix_Daru_Tenaud}. Convergence is observed as
  the mesh size goes to zero.  The results obtained on the finest mesh
level L\,14 are shown in Figure~\ref{fig:shocktube}(b). The values of
the local minima and local maxima of $C\lof$ obtained on the grids
L\,11, L\,12, L\,13, and L\,14 are reported in
Table~\ref{tab:shocktube_extrema} and identified with symbols in  Figure~\ref{fig:shocktube-detailed}
in the Appendix~\ref{Sec:appendix_Daru_Tenaud}.
We also show in this table the extrapolated values of $C\lof$ that
have been obtained by using the open source software \texttt{gnuplot}
(see \url{http://www.gnuplot.info/}).  This is done by fitting the
four values of $C\lof$ obtained on the grids L\,11, L\,12, L\,13, and
L\,14 with the linear function $a + b\,h$ using the nonlinear
Levenberg-Marquardt least squares technique. The 12th and 13th extrema
(global maximum and minimum) have been extrapolated with convergence
orders that are slightly higher, \ie $a + b\,h^{1.5}$ and
$a + b\,h^{1.2}$, respectively.  We report the fit value $a$ and the
asymptotic standard error in the column labeled ``extrapolated'' in
Table~\ref{tab:shocktube_extrema} in the
  Appendix~\ref{Sec:appendix_Daru_Tenaud}. The results suggest that
the computation is indeed in the asymptotic regime with the normalized
asymptotic standard error
$\frac{\Delta C\lof}{\Delta C_{\textup{f,norm}}}$ smaller than
$0.2\,\%$ where
$\Delta C_{\textup{f,norm}}\eqq\max\,C\lof(\text{extrap.})  -
\min\,C\lof(\text{extrap.})$.
We also give in this table the values of $C\lof$ reported in
\citep{Daru_Tenaud_2009,Daru_Tenaud_2020}. These results have been
obtained with a 7th-order method called OSMP7 on a $2000\CROSS 4000$
uniform grid.  The relative deviation reported in the last column of
Table~\ref{tab:shocktube_extrema} is defined to be
$\frac{C\lof(\text{OSMP7}) - C\lof(\text{extrap.})}{\Delta
  C_{\textup{f,norm}}}$.
We report generally good agreement with the OSMP7 results with a
relative deviation that is generally less below $1\,\%$. There are
some noticeable differences though on the 13th, 15th, and 17th extrema
where the deviations are $-2.77\,\%$, $2.86\,\%$, and $3.99\,\%$,
respectively. We conjecture however that the results reported
  here are probably slightly more accurate than those from
  \citep{Daru_Tenaud_2009} since our finest grid is significantly  finer that 
those used therein.
Following the initiative of \citep{Daru_Tenaud_2020} and to facilitate
rigorous quantitative comparisons with other research codes, we
provide more detailed test vectors of the skin friction coefficient at
\citep{testvectors_2021}. We also reiterate the suggestion made in
\citep{Daru_Tenaud_2009} that this problem is an interesting benchmark
that should be used more systematically in the literature to test
compressible Navier-Stokes codes.

\subsection{Onera OAT15a airfoil}
\label{subse:airfoil}

The Onera OAT15a airfoil has been extensively studied in the literature.
Experimental results for the supercritical flow regime at Mach 0.73 and at
Reynolds number $3\CROSS 10^6$ have been published in
\cite{Jacquin_2005,Jacquin_2009}. Various numerical simulations of this
configuration are also available in the literature, see \eg
\cite{Deck_2005,Deck_Renard_2020,Nguyen_Terrana_Peraire_AIAA_2020}. We are
interested in predicting the pressure coefficient $C\lop$
\begin{align*}
  C\lop(\bx) \eqq\frac{\langle p(\bx,t)\rangle_t-p_\infty}{\tfrac{1}{2}\rho_\infty v_\infty^2},
\end{align*}
at Mach 0.73 with
an angle of attack of $3.5^\circ$ (deg). The symbol $\langle\cdot\rangle_t$ denotes the time average.
Here, $\rho_\infty$, $v_\infty$ and $p_\infty$ are the free-stream
density, velocity and pressure values. We take
$\rho_\infty = 1.225\,\text{kg}/{\text{m}}^3$,
$v_\infty = 248.42\,\text{m}/\text{s}$,
$p_\infty=1.013\times10^5\,\text{N}/{\text{m}}^2$ in our
computation. The ideal gas equation of state is used with
$\gamma=1.401$. We set the shear viscosity to
$\mu=1.789\times10^{-5} \text{Ns}/{\text{m}}^2$ and the bulk viscosity
to $\lambda=0$. The thermal conductivity is set to
$c_v^{-1}\kappa = 3.616\times10^{-5}\text{Ns}/{\text{m}}^2$.

The airfoil is shown in Fig.~\ref{fig:oat15a-c_p}. The chord length is
$23\,\text{cm}$ as in the experiment~\citep{Jacquin_2005,Jacquin_2009}.
In order to resolve the boundary
layer, a graded mesh is constructed using a ``manifold'' mapping
\citep{Heltai_2019}. As our primary objective is to compute the
pressure coefficient, a moderate grading using $x\mapsto x^2$
is chosen. This yields a 2D mesh with about 0.5 million
quadrilaterals. The near wall resolution for this mesh is
approximately $\Delta x \approx 250\,\mathrm{\mu m}$,
$\Delta y \approx 50\,\mathrm{\mu m}$. The 2D mesh is then extruded
with 513 repetitions resulting in a 3D mesh composed of about
274~million grid points. The resolution in the $z$-direction is
$\Delta z \approx 90\,\mathrm{\mu m}$. The ratio of the extension of the foil in
the transversal direction to the chord length is $0.20$.

The no-slip boundary condition is enforced on the airfoil.  The slip
boundary condition is enforced on the two vertical planes bounding the
computational domain in the $z$-direction.  The non-reflecting
boundary condition using the characteristics variables and described
in \S\ref{Sec:Characteritic_variables} is used for the outer boundary.
The fluid is assumed to be thermally insulated on all the boundaries.
Notice that deviating from the
experimental setup \citep[\S2]{Jacquin_2009} and other numerical
configurations \citep[Fig.~2]{Nguyen_Terrana_Peraire_AIAA_2020}, we do
not enfore a boundary-layer transition at $x/c=0.07$ with a
(numerical) trip wire.

\begin{figure}[ht]
  \begin{minipage}[b]{0.49\textwidth} \centering
  \input{oat15a-3d-c_p.tex} \\
    \small (a)
  \end{minipage}
  \begin{minipage}[b]{0.49\textwidth} \centering
   \includegraphics[width=\textwidth]{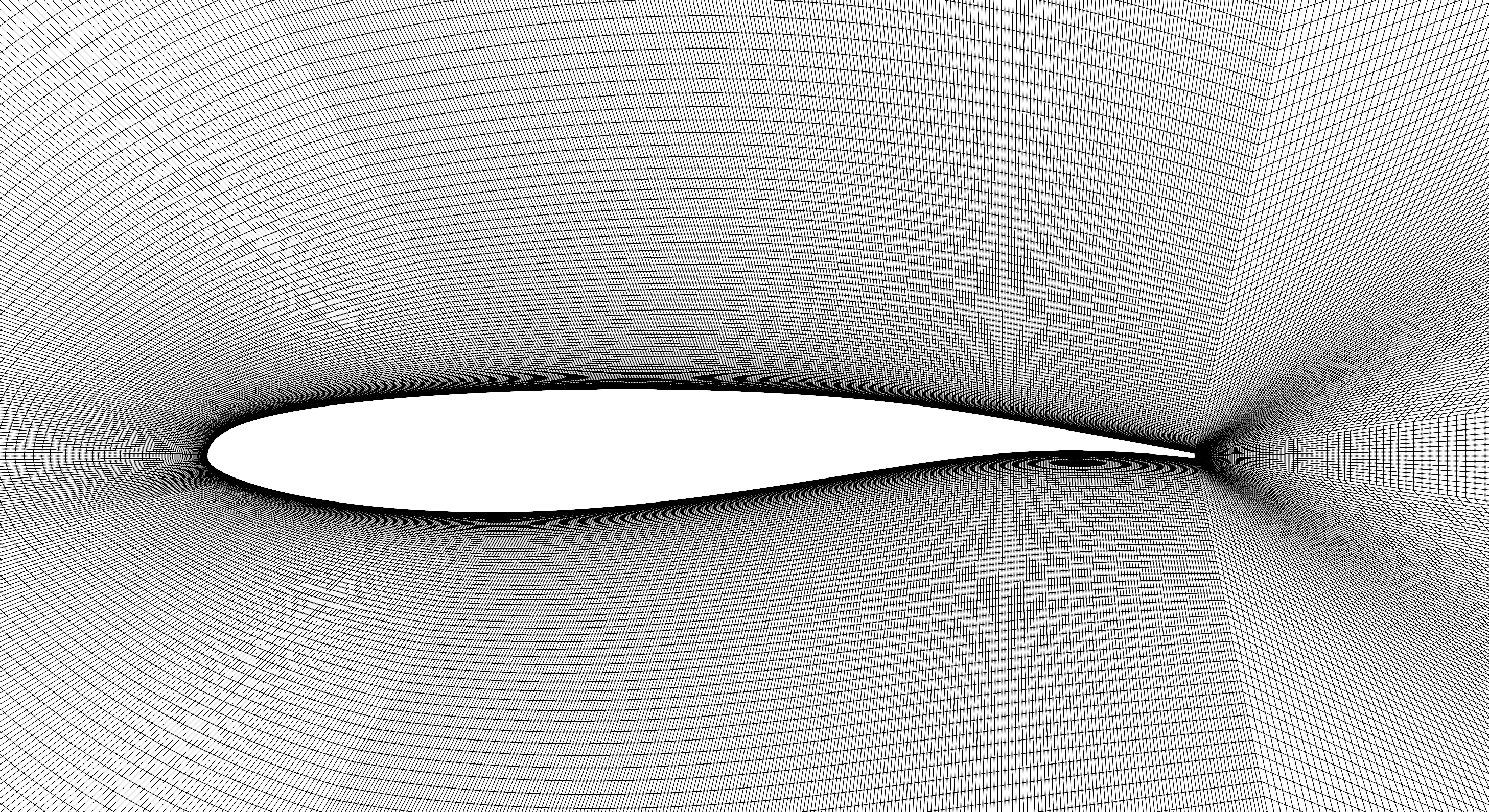} \\
\vspace*{\baselineskip}
\small (b)
  \end{minipage}
  \caption{The Onera OAT15a airfoil depicted with a graded mesh
  used in our computations. The actual hexahedral 3D mesh is created by
  extruding the quadrilateral 2D mesh in the $z$-direction by a set number of
  repetition.}
  \label{fig:oat15a-c_p}
\end{figure}
The $C\lop$ profile reported in Fig.~\ref{fig:oat15a-c_p} is computed
by averaging over 900 temporal snapshots sampled from the computation
every $\Delta t = 1.0\CROSS 10^{-4}$ and by eventually averaging the
results in the $z$-direction. The experimental results have been
provided to us by Prof. J. Peraire and have been extracted from
\citep[Fig.~7]{Jacquin_2009} using the online tool
\url{https://automeris.io/WebPlotDigitizer}. We observe that the
agreement with the experimental results is quite good.

\subsection{Compute-kernel benchmark and 3D 
  strong scaling results}
\label{subse:scaling}

In this section, we report tests assessing the strong and weak
scalability of \texttt{ryujin}.
All the experiments are performed on the supercomputer
SuperMUC-NG,\footnote{\url{https://top500.org/system/179566/},
 retrieved on April 15, 2021.} using up to 2,048 nodes of $2$ Intel
Xeon Platinum 8174 CPUs (24 cores each). The CPU cores are operated at
a fixed clock frequency of 2.3 GHz. The C++ implementation is compiled
with the GNU compiler, using the option
\texttt{-march=skylake-avx512 -O3 -funroll-loops} to target the
specific machine with AVX-512 vectorization. The theoretical peak
performance of a node is 3.5 TFlop/s, and the memory bandwidth of a node
is 205 GB/s according to the STREAM benchmark. The
detailed node-level analysis of \cite{maier2020massively} has shown that the memory
bandwidth and evaluations of transcendental functions are the dominating
costs. In the following experiments, we show that the fast node-level
performance comes along with optimal scalability to large node counts,
where the simulation is partitioned among the participating processes
with Morton space filling curves using the \texttt{p4est} library by
\cite{Burstedde11} wrapped into \texttt{deal.II}
\citep{Bangerth11}. Parallelization is based on MPI, using the Intel
Omni-Path protocol of SuperMUC-NG for the inter-node communication.

\pgfplotstableread{
  nodes  dofs      nts time
  1      4360200   820 1709.32
  2      4360200  1000 1083.73
  4      4360200  1000  570.14
  8      4360200  1000  301.17
  16     4360200  1000  167.71
  32     4360200  1000   90.79
  64     4360200  1000   53.54
  128    4360200  1000   33.45
  256    4360200  1000   19.39
  512    4360200  1000   13.01
  1024   4360200  1000    9.11
  2048   4360200  1000    8.35
}\tableStrongNStokesZ
\pgfplotstableread{
  nodes  dofs      nts time    velrhs velsol enerhs enesol copy
  4      34479120  380 1536.42 2.58   67.37  12.70  41.19  2.69
  8      34479120  820 1689.20 2.76   73.60  13.84  43.43  3.16
  16     34479120 1000 1087.13 1.39   44.45  8.55   25.95  2.09
  32     34479120 1000  568.68 0.51   22.75  4.42   13.16  1.16
  64     34479120 1000  334.32 0.20   15.46  2.18   6.63   0.84
  128    34479120 1000  172.04 0.10   8.54   0.99   3.64   0.66
  256    34479120 1000   93.35 0.06   7.61   0.40   1.58   0.33
  512    34479120 1000   54.92 0.04   4.26   0.19   0.94   0.20
  1024   34479120 1000   31.99 0.03   2.14   0.12   0.78   0.15
  2048   34479120 1000   20.12 0.02   1.77   0.08   0.85   0.11
}\tableStrongNStokesA
\pgfplotstableread{
  nodes  dofs       nts time
  32     274229280  980 3929.86
  64     274229280 1000 2114.21
  128    274229280 1000 1108.65
  256    274229280 1000  592.13
  512    274229280 1000  334.09
  1024   274229280 1000  180.53
  2048   274229280 1000  100.67
}\tableStrongNStokesB
\pgfplotstableread{
  nodes  dofs        nts time
  256    2187432000 1000 4127.92
  512    2187432000 1000 2129.07
  1024   2187432000 1000 1136.00
  2048   2187432000 1000  610.53
}\tableStrongNStokesC
\pgfplotstableread{
  nodes  dofs         nts time
  2048   17473872000 1000 4483.64
}\tableStrongNStokesD

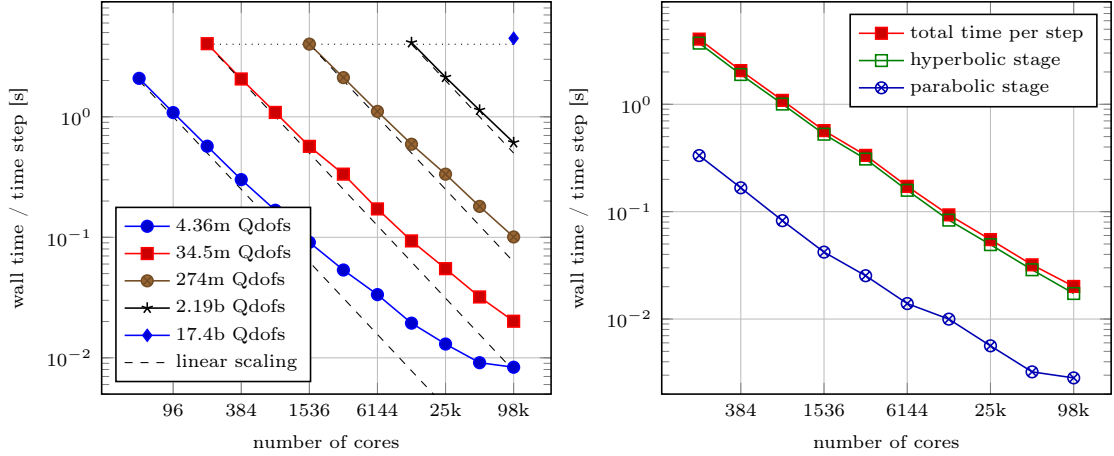
\begin{figure}[h]
  \centering
    \begin{tikzpicture}
      \begin{loglogaxis}[
        width=0.5\textwidth,
        height=0.45\textwidth,
        xlabel={number of cores},
        ylabel={wall time / time step [s]},
        tick label style={font=\scriptsize},
        label style={font=\scriptsize},
        legend style={font=\scriptsize},
        legend cell align=left,
        legend pos={south west},
        grid,
        mark size=2.2,
        xtick={96,384,1536,6144,24576,98304},
        xticklabels={96,384,1536,6144,25k,98k},
        ymin=5e-3,ymax=9.0e+0,
        semithick
        ]
        \addplot table[x expr={\thisrowno{0}*48}, y expr={\thisrowno{3}/\thisrowno{2}}] {\tableStrongNStokesZ};
        \addlegendentry{4.36m Qdofs};
        \addplot table[x expr={\thisrowno{0}*48}, y expr={\thisrowno{3}/\thisrowno{2}}] {\tableStrongNStokesA};
        \addlegendentry{34.5m Qdofs};
        \addplot table[x expr={\thisrowno{0}*48}, y expr={\thisrowno{3}/\thisrowno{2}}] {\tableStrongNStokesB};
        \addlegendentry{274m Qdofs};
        \addplot table[x expr={\thisrowno{0}*48}, y expr={\thisrowno{3}/\thisrowno{2}}] {\tableStrongNStokesC};
        \addlegendentry{2.19b Qdofs};
        \addplot table[x expr={\thisrowno{0}*48}, y expr={\thisrowno{3}/\thisrowno{2}}] {\tableStrongNStokesD};
        \addlegendentry{17.4b Qdofs};
        \addplot[dashed,black,thin] coordinates {
            (192,  4.0    )
            (98304,4.0/512)
          };
        \addplot[dashed,black,thin] coordinates {
            (48,   2.0    )
            (49152,2.0/1024)
          };
        \addlegendentry{linear scaling};
        \addplot[dashed,black,thin] coordinates {
            (1536, 4.0   )
            (98304,4.0/64)
          };
        \addplot[dashed,black,thin] coordinates {
            (12288,4.0  )
            (98304,4.0/8)
          };
        \addplot[dotted,black,thin] coordinates {
            (192,  4.0)
            (98304,4.0)
          };
        \end{loglogaxis}
      \end{tikzpicture}
      \begin{tikzpicture}
      \begin{loglogaxis}[
        width=0.5\textwidth,
        height=0.45\textwidth,
        xlabel={number of cores},
        ylabel={wall time / time step [s]},
        tick label style={font=\scriptsize},
        label style={font=\scriptsize},
        legend style={font=\scriptsize},
        legend cell align=left,
        legend pos={north east},
        grid,
        mark size=2.2,
        xtick={96,384,1536,6144,24576,98304},
        xticklabels={96,384,1536,6144,25k,98k},
        ymin=2e-3,ymax=9.0e+0,
        semithick
        ]
        \addplot[color=red,mark=square*,every mark/.append style={fill=red!80!black}] table[x expr={\thisrowno{0}*48}, y expr={\thisrowno{3}/\thisrowno{2}}] {\tableStrongNStokesA};
        \addlegendentry{total time per step};
        \addplot[color=green!50!black,mark=square] table[x expr={\thisrowno{0}*48}, y expr={(\thisrowno{3}-\thisrowno{4}-\thisrowno{5}-\thisrowno{6}-\thisrowno{7}-\thisrowno{8})/\thisrowno{2}}] {\tableStrongNStokesA};
        \addlegendentry{hyperbolic stage};
        \addplot[color=blue!70!black,mark=otimes] table[x expr={\thisrowno{0}*48}, y expr={(\thisrowno{4}+\thisrowno{5}+\thisrowno{6}+\thisrowno{7}+\thisrowno{8})/\thisrowno{2}}] {\tableStrongNStokesA};
        \addlegendentry{parabolic stage};
        \end{loglogaxis}
      \end{tikzpicture}
      \caption{Scaling analysis of the 3D Onera OAT15a airfoil  on up to
        2,048 nodes (\ie 98,304 cores) of SuperMUC-NG. In the left panel,
        the strong and weak scaling of five different problem sizes is
        assessed, involving up to 17.4 billion grid points (Qdofs) (\ie 85
        billion unknowns). In the right panel, the time per time step for
        the case with 34.5 million grid points is broken down to show the
        contributions of the hyperbolic and parabolic parts.}
  \label{fig:strong_scaling}
\end{figure}

In a first series of tests, we assess the strong scalability
performance of \texttt{ryujin} on the Onera OAT15a airfoil studied in
\S\ref{subse:airfoil}. The left panel in
Figure~\ref{fig:strong_scaling} shows the time spent per time step
versus the number of cores for five different problem sizes ranging
from 4.4 million to 17.4 billion grid points (recall that there are 48
cores per node).  In order to represent the steady-state performance,
the reported time per time step is obtained from an experiment that
measures the run time to complete 1000 time steps and then divides
this time by the number of time steps.
The case with~4.36 million grid points runs into communication latency
for 1024 nodes (49k cores). At this point, the number of grid points
per node is about 4,257, and the number of grid points per core is
about~89. For larger problem sizes per node, the scaling is almost
ideal, with a slight loss in efficiency as the share of
SIMD-vectorized work in the algorithm by \cite{maier2020massively} is
reduced near the interfaces between different parallel processes, and
by some slight load imbalance in the dynamic steps of the computation
as the number of cores increases. The performance achieved is
nevertheless excellent. For example, the computation with 274 million
grid points on 98k cores achieves a parallel efficiency of 63\% when
using the computation on 1,536 cores as baseline; the turnaround time
is then almost one million time steps per day. The case with 2.2
billion grid points on 98k cores achieves an efficiency of 85\%
against the computation on 12,288 cores.  Note in passing that
this series of tests also demonstrates excellent weak scalability. For
instance, counting from the top of the panel, the first red square,
the first brown circle, the first black star, and the unique blue
diamond are perfectly aligned along the horizontal dotted line. This
means that the run time per time step remains constant as the number
of grid point and the number of cores are both multiplied by 8.

We show in the right panel of Figure~\ref{fig:strong_scaling} the time per time step
for the parabolic step and for the hyperbolic step using the mesh composed of 34.5 millions grid points.
This breakdown of the timings
reveals that the hyperbolic stage dominates the run time in this case. This is
because the Reynolds number in this 3D benchmark is so large
that the conjugate gradient solver with a simple inverse mass matrix
preconditioner as described in \S\ref{Sec:linear_algebra} is efficient. Even
for the largest case with 17 billion points, we observe that the velocity solver converges in
2 iterations and the energy solver takes 3 iterations.

\pgfplotstableread{
  nodes  dofs       nts time    velrhs velsol enerhs enesol copy
  4      134242305 1000 8940.50 113.25 2509.2 38.54  2687.8 15.50
  8      134242305 1000 4537.77 57.56  1232.4 19.33  1408.4 8.45
  16     134242305 1000 2313.05 28.74  591.14 9.43   735.55 4.41
  32     134242305 1000 1166.84 14.71  275.67 4.60   394.07 2.27
  64     134242305 1000 600.27  7.41   130.42 2.31   214.55 1.18
  128    134242305 1000 317.62  3.62   65.87  1.16   120.64 0.58
  256    134242305 1000 178.62  1.93   37.73  0.60   70.44  0.24
  512    134242305 1000 102.29  1.08   23.86  0.29   39.74  0.11
  1024   134242305 1000 58.08   0.59   15.30  0.16   22.47  0.05
  2048   134242305 1000 41.85   0.36   15.51  0.11   14.49  0.04
}\tableStrongMultigrid

\begin{figure}[h]
  \strut\hfill
  \begin{tikzpicture}
      \begin{loglogaxis}[
        width=0.5\textwidth,
        height=0.45\textwidth,
        xlabel={number of cores},
        ylabel={wall time / time step [s]},
        tick label style={font=\scriptsize},
        label style={font=\scriptsize},
        legend style={font=\scriptsize},
        legend cell align=left,
        legend pos={north east},
        grid,
        mark size=2.2,
        xtick={96,384,1536,6144,24576,98304},
        xticklabels={96,384,1536,6144,25k,98k},
        ymin=8e-3,ymax=1.2e+1,
        semithick
        ]
        \addplot[color=red,mark=square*,every mark/.append style={fill=red!80!black}] table[x expr={\thisrowno{0}*48}, y expr={\thisrowno{3}/\thisrowno{2}}] {\tableStrongMultigrid};
        \addlegendentry{total time per step};
        \addplot[color=green!50!black,mark=square] table[x expr={\thisrowno{0}*48}, y expr={(\thisrowno{3}-\thisrowno{4}-\thisrowno{5}-\thisrowno{6}-\thisrowno{7}-\thisrowno{8})/\thisrowno{2}}] {\tableStrongMultigrid};
        \addlegendentry{hyperbolic stage};
        \addplot[color=blue!70!black,mark=otimes] table[x expr={\thisrowno{0}*48}, y expr={(\thisrowno{4}+\thisrowno{5}+\thisrowno{6}+\thisrowno{7}+\thisrowno{8})/\thisrowno{2}}] {\tableStrongMultigrid};
        \addlegendentry{parabolic stage};
        \addplot[dashed,black,thin] coordinates {
            (192,  3.5)
            (98304,3.5/512)
          };
        \end{loglogaxis}
      \end{tikzpicture}
      \hfill\strut
      \caption{Analysis of strong scaling of 2D shocktube test with 134
        million grid points and multigrid solvers for velocity and energy.}
  \label{fig:strong_scaling_mg}
\end{figure}
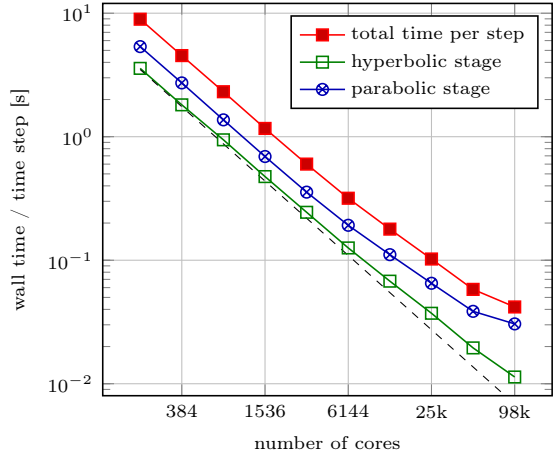

Figure~\ref{fig:strong_scaling_mg} shows a strong scaling experiment done
on the 2D shocktube discussed in \S\ref{subse:shocktube}. The simulations
are done on the mesh refinement level L13 with 134 million grid points.
Here, the flow is locally dominated by viscous effects, and therefore the
geometric multigrid solver described in \S\ref{Sec:linear_algebra} is used
for optimal performance. While the contribution to the run time per time
step of the viscous step is significantly higher when the multigrid solver
is activated than when using the plain conjugate gradient solver, the
parallel scaling is still excellent in this case. We observe that the
strong scaling still holds after multiplying the initial core count by
$2^8=256$. With $192\CROSS 2^8$ cores, the number of grid point per node is
about 131,000. The performance deteriorates though when the initial core count
is multiplied by $2^9=512$.  The excellent strong scaling performance is
due to the algorithms developed in \cite{Kronbichler2018} and
\cite{Clevenger2021}.

In summary, the above scaling experiments confirm that the proposed
algorithms have good node-level performance.  They are suitable to solve
large-scale problems on large-scale computers in various application
scenarios.

\section*{Acknowledgments} The authors thank C. Tenaud and V. Daru for
stimulating discussions and for sharing their
data~\citep{Daru_Tenaud_2020}. They also thank J. Peraire and his
collaborators for sharing the experimental data used in
Figure~\ref{fig:oat15a-c_p}. The authors finally thank S. Deck for
providing the OART15a geometry file. They also thank him for providing
a file containing experimental data, which, unfortunately, they have
not been able to read.

This material is based upon work supported in part by the National Science
Foundation via grants DMS 1619892 (BP \& JLG), DMS 1620058 (JLG) and DMS
1912847 \& 2045636 (MM); the Air Force Office of Scientific Research, USAF, under
grant/contract number FA9550-18-1-0397 (JLG\&BP); and by the Army Research Office
under grant/contract number W911NF-19-1-0431 (JLG\&BP).  IT was supported by
LDRD-21-1189 project \#23796 from Sandia National Laboratories.
MK was supported by the Bayerisches Kompetenznetzwerk f\"ur
Technisch-Wissenschaftliches Hoch- und H\"ochstleistungsrechnen (KONWIHR). The
authors gratefully acknowledge the Gauss Centre for Supercomputing
e.V.~(\texttt{www.gauss-centre.eu}) for funding this project by providing
computing time on the GCS Supercomputer SuperMUC at Leibniz Supercomputing
Centre (LRZ, \texttt{www.lrz.de}) through project id pr83te.



\appendix

\section{Boundary fluxes and boundary conditions}\label{sec:BCmath}
We collect in this appendix proofs of results stated in the body of
the paper. The statements are repeated for clarity.

\begin{lemma}[Balance after limiting] \label{Lem:boundary_flux_bis}%
  \phantom{It holds:}
  \begin{enumerate}[font=\upshape,label=(\roman*)]
    \item
      For all $\bu_h\eqq \sum_{i\in\calV} \bsfU_i\varphi\in \bP(\calT_h)$, the
      following holds true:
      $\int_\Dom \bu_h\diff x = \sum_{i\in\calV} m_i \bsfU_i$.
    \item
      Let $\bsfU^n$ be a collection of admissible states. Let $\bsfU\upnp$ be
      the update after one forward-Euler step and after limiting. Then the
      following balance identity holds:
      \begin{equation}\label{Eq:Lem:boundary_flux_bis}%
        \sum_{i\in\calV} m_i \bsfU_i^{n+1} + \dt_n \sum_{i\in\calV\upbnd}
        m\upbnd_i \polf(\bsfU_i^n)\bn_i = \sum_{i\in\calV} m_i \bsfU_i^{n}.
      \end{equation}
  \end{enumerate}
\end{lemma}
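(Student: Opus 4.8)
The plan is to dispatch (i) directly and then reduce (ii) to a single summation on the low-order update, the boundary term arising from an integration by parts.

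For (i), writing $\bu_h = \sum_{i\in\calV}\bsfU_i\varphi_i$ and using linearity of the integral together with the definition $m_i = \int_\Dom \varphi_i\diff x$ gives $\int_\Dom \bu_h\diff x = \sum_{i\in\calV}\bsfU_i\int_\Dom\varphi_i\diff x = \sum_{i\in\calV} m_i\bsfU_i$; nothing more is needed. For (ii), the first move is to invoke the mass-conservation-after-limiting identity \eqref{mass_conservation_by_limiting}, namely $\sum_{i\in\calV} m_i\bsfU_i^{n+1} = \sum_{i\in\calV} m_i\bsfU_i\upLnp$, which lets me replace the limited state $\bsfU^{n+1}$ by the low-order state $\bsfU\upLnp$ in \eqref{Eq:Lem:boundary_flux_bis}. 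It then suffices to prove the balance for the low-order update. I would sum the forward-Euler formula \eqref{loworder} over all $i\in\calV$ to obtain $\sum_i m_i(\bsfU_i\upLnp - \bsfU_i^n) = \dt_n \sum_i\sum_{j\in\calI(i)}\bsfF_{ij}\upL$ with $\bsfF_{ij}\upL = -\polf(\bsfU_j^n)\bc_{ij} + d_{ij}\upLn(\bsfU_j^n - \bsfU_i^n)$, and then evaluate the double sum term by term.

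The graph-viscosity contribution $\sum_i\sum_{j\in\calI(i)} d_{ij}\upLn(\bsfU_j^n - \bsfU_i^n)$ vanishes by antisymmetry: the diagonal terms are zero, and since $d\upLn$ is symmetrized by construction ($d_{ij}\upLn = d_{ji}\upLn$), the contributions of $(i,j)$ and $(j,i)$ cancel pairwise. For the flux term I would exchange the order of summation, using that the stencil relation $j\in\calI(i)$ is symmetric, to write $\sum_i\sum_{j\in\calI(i)}\polf(\bsfU_j^n)\bc_{ij} = \sum_{j\in\calV}\polf(\bsfU_j^n)\sum_{i\in\calI(j)}\bc_{ij}$.

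The crux — and the step to watch — is the \emph{column} sum $\sum_{i\in\calI(j)}\bc_{ij}$. This is \emph{not} the row sum $\sum_{j\in\calI(i)}\bc_{ij}=0$ that drives interior conservation; instead, the partition of unity $\sum_i\varphi_i = 1$ gives $\sum_{i\in\calI(j)}\bc_{ij} = \int_\Dom \GRAD\varphi_j\diff x$, and integration by parts turns this into $\int_\front\varphi_j\bn\diff s$. This vanishes for $j\in\calV\upint$ (since then $\varphi_j$ vanishes on $\front$) and equals $m_j\upbnd\bn_j$ for $j\in\calV\upbnd$ by the definition \eqref{unit_normal_vector}. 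Substituting back yields $\sum_i\sum_{j\in\calI(i)}\bsfF_{ij}\upL = -\sum_{j\in\calV\upbnd} m_j\upbnd\polf(\bsfU_j^n)\bn_j$, and combining this with the summed update and the limiting identity produces exactly \eqref{Eq:Lem:boundary_flux_bis}. The one subtlety I expect to argue carefully is precisely this row-versus-column distinction: it is the failure of the column sum to vanish at the boundary that generates the boundary-flux term, so I would stress the integration-by-parts identity and the role of \eqref{unit_normal_vector}, rather than the interior identity $\sum_{j\in\calI(i)}\bc_{ij}=0$.
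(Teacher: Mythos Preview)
Your proposal is correct and follows essentially the same route as the paper: invoke \eqref{mass_conservation_by_limiting} to replace $\bsfU^{n+1}$ by $\bsfU\upLnp$, observe that the symmetric graph-viscosity contribution cancels upon summation, and use the partition of unity plus integration by parts to turn the column sum $\sum_{i\in\calI(j)}\bc_{ij}=\int_\Dom\GRAD\varphi_j\diff x$ into the boundary term $m_j\upbnd\bn_j$. Your explicit emphasis on the row-versus-column distinction is a helpful expository addition, but the underlying argument matches the paper's proof.
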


\begin{proof}
\textup{(i)}
Using the definition $m_i\eqq \int_\Dom \varphi_i\diff x$, we have
\begin{align*}
  \int_\Dom \bu_h\diff x = \sum_{i\in\calV} \bsfU_i \int_\Dom \varphi_i \diff x =
  \sum_{i\in\calV} m_i \bsfU_i.
\end{align*}
\textup{(ii)}
Recall that the definition of $\bsfU\upL$ implies that
\begin{align*}
  \sum_{i\in\calV} m_i \bsfU_i\upLnp + \dt_n \sum_{i\in\calV} \sum_{j\in\calI(i)} \polf(\bsfU_j^n)
  \int_\Dom \varphi_i\GRAD\varphi_j\diff x - \dt_n
  \underbrace{\sum_{i\in\calV}\sum_{j\in\calI(i)} d_{ij}\upLnp
  (\bsfU_j^n-\bsfU_i^n)}_{=\;0}
  = \sum_{i\in\calV} m_i \bsfU_i^{n}.
\end{align*}
Using that limiting conserves the total mass,
$\sum_{i\in\calV} m_i \bsfU_i\upn = \sum_{i\in\calV} m_i
\bsfU_i\upLnp$,
see \eqref{mass_conservation_by_limiting}, the partition of unity
property $\sum_{i\in\calV}\varphi_i=1$, and the definition of
$\{m_j\upbnd\}_{j\in\calV\upbnd}$ and $\{\bn_j\}_{j\in\calV\upbnd}$ yields
\begin{align*}\sum_{i\in\calV} m_i \bsfU_i^{n}
  &=\sum_{i\in\calV} m_i \bsfU_i\upnp + \dt_n \sum_{j\in\calV} \polf(\bsfU_j^n)\int_\Dom \GRAD\varphi_j \diff x \\
  &=\sum_{i\in\calV} m_i \bsfU_i\upnp + \dt_n \sum_{j\in\calV} \polf(\bsfU_j^n)\int_{\front} \varphi_j \bn \diff s\\
  &=\sum_{i\in\calV} m_i \bsfU_i\upnp + \dt_n \sum_{j\in\calV} m_j\upbnd \polf(\bsfU_j^n) \bn_j.
\end{align*}
The assertion is proved. 
\end{proof}

\begin{lemma}[Slip condition]\label{Lem:slip_bc_bis}
Let $i\in
\calV\upbnd\los$, let $\bsfU_i\in \calA$, and let $\bsfU_i\upP$ as defined in \eqref{slip_bc}.
  \begin{enumerate}[font=\upshape,label=(\roman*)]
    \item Then $\bsfU_i\upP$ is also admissible, meaning
$\bsfU_i\upP\in \calA$.
    \item Assume also that the equation of state derives from an
entropy $s$.  Then $s(\bsfU_i\upP) \ge s(\bsfU_i)$.
    \item For all $i\in \calV\upbnd\los{\setminus}\calV\upbnd\lonr$,
the mass flux and the total energy flux of the postprocessed solution
at $i$ is zero (\ie $\rho(\polf(\bsfU_i\upP)\bn_i)=0$ and
$E(\polf(\bsfU_i\upP)\bn_i)=0$).
  \end{enumerate}
\end{lemma}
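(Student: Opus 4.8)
The plan is to exploit the fact that the slip post-processing \eqref{slip_bc} replaces $\bsfM_i$ by its orthogonal projection onto the hyperplane normal to the \emph{unit} vector $\bn_i\ups$, while leaving $\varrho_i$ and $\sfE_i$ untouched. The algebraic fact I would record first is that, since $\bn_i\ups\SCAL\bn_i\ups=1$, the projected momentum $\bsfM_i\upP\eqq \bsfM_i-(\bsfM_i\SCAL\bn_i\ups)\bn_i\ups$ satisfies the two one-line identities $\bsfM_i\upP\SCAL\bn_i\ups=0$ and $\|\bsfM_i\upP\|_{\ell^2}^2=\|\bsfM_i\|_{\ell^2}^2-(\bsfM_i\SCAL\bn_i\ups)^2\le\|\bsfM_i\|_{\ell^2}^2$; everything else follows from these. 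For part \textup{(i)} the density is unchanged, so $\rho(\bsfU_i\upP)=\varrho_i>0$. For the internal energy I would use $\varepsilon(\bsfU)=\sfE-\tfrac12\varrho^{-1}\|\bsfM\|_{\ell^2}^2$ from \S\ref{Sec:low_order}: since $\varrho_i$ and $\sfE_i$ are preserved and the momentum norm does not increase, $\varepsilon(\bsfU_i\upP)\ge\varepsilon(\bsfU_i)>0$, hence $e(\bsfU_i\upP)=\varepsilon(\bsfU_i\upP)/\varrho_i>0$, giving $\bsfU_i\upP\in\calA$.

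For part \textup{(ii)} I would invoke the thermodynamic structure assumed in the hypothesis: an entropy $s$ compatible with the equation of state is a function of $(\varrho,e)$ obeying the Gibbs relation $T\,\mathrm{d}s=\mathrm{d}e+p\,\mathrm{d}(\varrho^{-1})$, so that at fixed density $\partial_e s|_\varrho=1/T>0$, i.e.\ $s$ is strictly increasing in $e$ for fixed $\varrho$. Because the projection keeps $\varrho_i$ fixed and does not decrease $e$ (by part \textup{(i)}), monotonicity yields $s(\bsfU_i\upP)\ge s(\bsfU_i)$. For the $\gamma$-law this is completely explicit from $s(\bsfU)=\log\big(e^{\frac1{\gamma-1}}\varrho^{-1}\big)$.

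For part \textup{(iii)} the key step is to show that at a \emph{pure} slip node $i\in\calV\upbnd\los{\setminus}\calV\upbnd\lonr$ one has $\bn_i=\bn_i\ups$. I would extract this from the identity $m_i\upbnd\bn_i=m_i\ups\bn_i\ups+m_i\upnr\bn_i\upnr$ recorded after \eqref{def_ns_nio}: since $i\notin\calV\upbnd\lonr$ we have $\varphi_{i|\front\lonr}\equiv 0$, whence $m_i\upnr=0$ and $m_i\upbnd\bn_i=m_i\ups\bn_i\ups$; as $\bn_i$ and $\bn_i\ups$ are both unit vectors this forces $\bn_i=\bn_i\ups$. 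Reading off the mass and energy rows of $\polf(\bsfU)\bn$, namely $\rho(\polf(\bsfU)\bn)=\bbm\SCAL\bn$ and $E(\polf(\bsfU)\bn)=(E+p)\varrho^{-1}(\bbm\SCAL\bn)$, I find that both $\rho(\polf(\bsfU_i\upP)\bn_i)$ and $E(\polf(\bsfU_i\upP)\bn_i)$ are proportional to $\bsfM_i\upP\SCAL\bn_i=\bsfM_i\upP\SCAL\bn_i\ups=0$, which vanishes by the first paragraph. Hence both fluxes are zero.

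I expect the only genuine subtlety to be part \textup{(ii)}: one must pin down the precise sense in which the entropy is compatible with the equation of state, so that the monotonicity of $s$ in $e$ at fixed $\varrho$ is available. With the $\gamma$-law this is immediate, and in general it reduces to the standard positivity of the temperature. Parts \textup{(i)} and \textup{(iii)} are essentially bookkeeping once the projection identities and the equality $\bn_i=\bn_i\ups$ are established.
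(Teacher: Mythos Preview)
Your proposal is correct and follows essentially the same approach as the paper: the Pythagorean identity for the projected momentum gives $\varepsilon(\bsfU_i\upP)\ge\varepsilon(\bsfU_i)$ for part \textup{(i)}, monotonicity of the entropy in $e$ at fixed $\varrho$ (the paper phrases this as $\partial_e\sigma(\rho,e)>0$ for a ``physically realistic'' entropy, which is exactly your $1/T>0$) gives part \textup{(ii)}, and the identification $\bn_i=\bn_i\ups$ for $i\in\calV\upbnd\los{\setminus}\calV\upbnd\lonr$ together with $\bsfV_i\upP\SCAL\bn_i\ups=0$ gives part \textup{(iii)}. Your write-up is in fact slightly more explicit than the paper's in justifying $\bn_i=\bn_i\ups$ via the decomposition $m_i\upbnd\bn_i=m_i\ups\bn_i\ups+m_i\upnr\bn_i\upnr$ and $m_i\upnr=0$.
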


\begin{proof}
\textup{(i)}
Let us denote $\bsfU_i\upP \qqe (\varrho_i\upP,\bsfM_i\upP,\sfE_i\upP)$.
Then $\varrho_i\upP=\varrho_i$, which implies that $\varrho_i\upP>0$ since
$\bsfU_i\in\calA$. We also have
\begin{align*}
  \varepsilon(\bsfU\upP_i)
  &= \sfE_i\upP - \tfrac{1}{2\varrho_i\upP} (\bsfM_i\upP)^2 \\
  &= \sfE_i - \tfrac{1}{2\varrho_i} ( (\bsfM_i)^2 - (\bsfM_i\SCAL\bn_i)^2)
    \ge
    \sfE_i - \tfrac{1}{2\varrho_i} (\bsfM_i)^2
  = \varepsilon(\bsfU_i).
\end{align*}
That is $\varepsilon(\bsfU\upP_i)\ge \varepsilon(\bsfU_i)> 0$
because $\bsfU_i\in\calA$. Since $\rho(\bsfU_i\upP) = \rho(\bsfU_i)$ and, as
proved above, the internal energy $\varepsilon(\bsfU)$ stays positive, we
infer that the specific internal energy $e(\bsfU) = \varepsilon(\bsfU)/\rho$
remains positive too. This proves the first assertion. \\
\textup{(ii)} Let us make the change of variable $\sigma(\rho(\bsfU),e(\bsfU))
\eqq s(\bsfU)$. Using fundamental theorem of calculus
\[
s(\bsfU_i\upP) = \sigma(\varrho_i,e(\bsfU\upP)) =
\sigma(\varrho_i,e(\bsfU))
+ \int_{e(\bsfU)}^{e(\bsfU\upP)} \partial_e \sigma(\varrho_i,e) \diff e.
\]
But, in order for $\sigma$ to be physically realistic it has to satisfy
$\partial_{e} \sigma(\rho,e)>0$. Hence $s(\bsfU_i\upP)\ge
\sigma(\varrho_i,e(\bsfU)) \eqq s(\bsfU_i)$. \\
\textup{(iii)} Let $i\in \calV\upbnd\los{\setminus}\calV\upbnd\lonr$,
then $\bn_i\ups=\bn_i$.  Recall from \eqref{Eq:Lem:boundary_flux_bis}
that the boundary flux induced by $\bsfU_i\upP$ is
$m_i\upbnd\polf(\bsfU_i\upP)\bn_i$.  Let $\bsfV_i\upP$ be the velocity of the
post-processed state $\bsfU_i\upP$.  By definition,
$\rho(\polf(\bsfU_i\upP)\bn_i) = \varrho_i\upP\bsfV_i\upP\SCAL \bn_i$,
$E(\polf(\bsfU_i\upP)\bn_i) = \bsfV_i\upP\SCAL \bn_i (\sfE_i\upP +
\sfP_i\upP)$ and $\bsfV_i\upP\SCAL \bn_i=\bsfV_i\upP\SCAL \bn_i\ups=0$, whence the assertion.
\end{proof}

\begin{lemma}[Global conservation] \label{Lem:global_conservation_bis}
Assume that $\calV\los\upbnd =\calV\upbnd$ and $\bsfU^n$
satisfies the slip boundary condition (\ie $\bsfM_i^n\SCAL \bn_i=0$
for all $i\in\calV\upbnd$).  Then the solution obtained at the end the
RKSSP(3,3) algorithm after limiting and post-processing, say
$\bsfU^{n+1}$, satisfies
$\sum_{j\in\calV} m_j \varrho_j\upnp = \sum_{j\in\calV} m_j
\varrho_j\upn$
and
$\sum_{j\in\calV} m_j \sfE_j\upnp = \sum_{j\in\calV} m_j \sfE_j\upn$.
\end{lemma}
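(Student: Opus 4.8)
The plan is to track the two linear functionals $\mathcal{M}(\bsfV)\eqq\sum_{i\in\calV}m_i\rho(\bsfV_i)$ (total mass) and $\mathcal{E}(\bsfV)\eqq\sum_{i\in\calV}m_i E(\bsfV_i)$ (total energy) through the three forward-Euler substeps of the RKSSP(3,3) algorithm, each followed by the slip post-processing \eqref{slip_bc}. The backbone is the balance identity \eqref{Eq:Lem:boundary_flux_bis} of Lemma~\ref{Lem:boundary_flux_bis}: writing $\mathcal{K}$ for the one-step forward-Euler-plus-limiting map, it states $\sum_{i\in\calV}m_i\,\mathcal{K}(\bsfV)_i=\sum_{i\in\calV}m_i\bsfV_i-\dt_n\sum_{i\in\calV\upbnd}m_i\upbnd\polf(\bsfV_i)\bn_i$ for admissible input $\bsfV$. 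Projecting onto the density and total-energy coordinates, the $\rho$- and $E$-components of the boundary flux $\polf(\bsfV_i)\bn_i$ are $\varrho_i\,\bsfV_i\SCAL\bn_i$ and $(\bsfV_i\SCAL\bn_i)(\sfE_i+\sfP_i)$, exactly the computation carried out in the proof of Lemma~\ref{Lem:slip_bc_bis}(iii). Hence, whenever $\bsfV$ satisfies the slip condition $\bsfV_i\SCAL\bn_i=0$ at every boundary node, these two components vanish and $\mathcal{K}$ conserves both $\mathcal{M}$ and $\mathcal{E}$.

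Next I would record two elementary facts about the slip post-processing $\mathrm{P}$ of \eqref{slip_bc}. First, $\mathrm{P}$ leaves the nodal density $\varrho_i$ and total energy $\sfE_i$ untouched — only the momentum is projected — so $\mathcal{M}$ and $\mathcal{E}$ are invariant under $\mathrm{P}$. Second, the hypothesis $\calV\los\upbnd=\calV\upbnd$ forces $\front\lonr=\emptyset$, so $\front\los=\front$ and the definitions \eqref{unit_normal_vector} and \eqref{def_ns_nio} give $\bn_i\ups=\bn_i$ at every boundary node; then $\bsfM_i\upP\SCAL\bn_i=\bsfM_i\SCAL\bn_i\ups-(\bsfM_i\SCAL\bn_i\ups)\|\bn_i\ups\|_{\ell^2}^2=0$, i.e. $\mathrm{P}(\bsfV)$ always satisfies the slip condition. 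Combined with the conservation of $\mathcal{K}$ on slip inputs, these observations let me maintain the invariant ``the current state satisfies slip and carries the same $\mathcal{M},\mathcal{E}$ as $\bsfU^n$'' from one stage to the next.

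The remaining ingredient is that the RKSSP(3,3) stages are affine combinations of $\bsfU^n$ with a freshly computed $\mathcal{K}$-update, to which $\mathrm{P}$ is then applied. Since $\mathcal{M}$ and $\mathcal{E}$ are linear, and the slip property is preserved under affine combinations of slip-satisfying states (the normal component of a combination of tangential momenta is again zero), I would proceed inductively. The input $\bsfU^n$ satisfies slip by hypothesis and has totals $\mathcal{M}(\bsfU^n),\mathcal{E}(\bsfU^n)$; applying $\mathcal{K}$ then $\mathrm{P}$ yields $\bsfW^{(1)}$ with the same totals and again slip. For the combination $\tfrac34\bsfU^n+\tfrac14\mathcal{K}(\bsfW^{(1)})$ both summands carry those totals (the second by Step one applied to the slip state $\bsfW^{(1)}$), so linearity preserves them, and $\mathrm{P}$ returns $\bsfW^{(2)}$ with the correct totals and slip. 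The identical step applied to $\tfrac13\bsfU^n+\tfrac23\mathcal{K}(\bsfW^{(2)})$ produces $\bsfU^{n+1}$ with $\mathcal{M}(\bsfU^{n+1})=\mathcal{M}(\bsfU^n)$ and $\mathcal{E}(\bsfU^{n+1})=\mathcal{E}(\bsfU^n)$, which is precisely the asserted pair of identities.

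The main obstacle is not a deep difficulty but a bookkeeping point that must be stated carefully: the flux term in \eqref{Eq:Lem:boundary_flux_bis} is evaluated at the \emph{input} of each forward-Euler substep — namely $\bsfU^n$, $\bsfW^{(1)}$, and $\bsfW^{(2)}$ — and the argument closes only because each of these inputs has already been post-processed into a slip state. I would therefore be careful to apply the vanishing-flux computation to the stage inputs rather than to the pre-processed stage outputs, and to verify that both operands of each affine combination already carry the correct totals before invoking linearity. Note finally that the slip condition $\bv\SCAL\bn=0$ is time-independent, so the collocation times attached to $\bsfW^{(1)},\bsfW^{(2)},\bsfU^{n+1}$ play no role here.
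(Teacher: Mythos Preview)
Your proposal is correct and follows essentially the same route as the paper: invoke the balance identity \eqref{Eq:Lem:boundary_flux_bis} at each forward-Euler substep with a slip-satisfying input so that the boundary density and energy fluxes vanish, use that the slip post-processing \eqref{slip_bc} leaves $\varrho_i$ and $\sfE_i$ unchanged while restoring the slip condition for the next input, and pass through the affine RKSSP(3,3) combinations by linearity of $\mathcal{M}$ and $\mathcal{E}$. One harmless aside: your remark that affine combinations of slip states are again slip is true but not actually used, since the second summand $\mathcal{K}(\bsfW^{(1)})$ in the stage-two combination is in general \emph{not} a slip state --- the slip property of $\bsfW^{(2)}$ comes solely from the subsequent application of $\mathrm{P}$, exactly as you correctly note a line later.
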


\begin{proof}
Let us assume now that $\calV\los\upbnd =\calV\upbnd$ and
$\bsfU^n$ satisfies the slip boundary condition at every boundary
node. Referring to \eqref{SPPRK33} for the notation, let us denote
$\bw_h^{(1)}\eqq \sum_{i\in\calV} \bsfW_i^{(1)}$ the update obtained
after the first forward-Euler step (high-order plus limiting). Then
using the identity \eqref{Eq:Lem:boundary_flux_bis}, we infer that
$\sum_{i\in\calV}m_i\rho(\bsfW_i^{(1)}) =
\sum_{i\in\calV}m_i\rho(\bsfU_i^{n})$.
Similarly, after post-processing $\bw^{(1)}_h$, the update
$\bw^{(2)}_h$ satisfies
\[
\sum_{i\in\calV}m_i\rho(\bsfW_i^{(2)}) = \tfrac34
\sum_{i\in\calV}m_i\rho(\bsfU_i^{n}) + \tfrac14 \sum_{i\in\calV}
m_i\rho((\bsfW_i^{(1)})\upP)) =  \tfrac34
\sum_{i\in\calV}m_i\rho(\bsfU_i^{n}) + \tfrac14 \sum_{i\in\calV}
m_i\rho((\bsfW_i^{(1)}))),
\]
\ie $\sum_{i\in\calV}m_i\rho(\bsfW_i^{(2)}) = \sum_{i\in\calV}m_i\rho(\bsfU_i^{n})$.
$\bw_h^{(3)}$ be update obtained at the final stage.
After post-processing $\bw_h^{(2)}$ we obtain
\[
\sum_{i\in\calV}m_i\rho(\bsfW_i^{(3)}) = \tfrac13
\sum_{i\in\calV}m_i\rho(\bsfU_i^{n}) + \tfrac23 \sum_{i\in\calV}
m_i\rho((\bsfW_i^{(2)})\upP)) =  \tfrac13
\sum_{i\in\calV}m_i\rho(\bsfU_i^{n}) + \tfrac23 \sum_{i\in\calV}
m_i\rho((\bsfW_i^{(2)}))),
\]
\ie
$\sum_{i\in\calV}m_i\rho(\bsfW_i^{(3)}) =
\sum_{i\in\calV}m_i\rho(\bsfU_i^{n})$.
Let $\bu_h\upnp\eqq \sum_i\bsfU_i\upnp\varphi_i$ be the final update after
post-processing, \ie $\bu_h\upnp = \calP(\bw_h^{(3)})$.  Then,
$\sum_{i\in\calV}m_i\rho(\bsfU_i\upnp) =
\sum_{i\in\calV}m_i\rho(\bsfW_i^{(3)}) =
\sum_{i\in\calV}m_i\rho(\bsfU_i^{n})$.
The argument for the conservation of the total energy is identical.
Actually, the argument holds for any explicit SSPRK technique.
\end{proof}

\clearpage

\section{Hyperbolic step} \label{Sec:appendix_hyperbolic_step}

\newcommand{\Ii}{\calI(i)}
\newcommand{\NN}{\mathcal{N}}
\newcommand{\bUni}{\bsfU^n_i}
\newcommand{\bUnj}{\bsfU^n_j}
\newcommand{\bUnij}{\overline{\bsfU}^n_{ij}}
\resizebox{0.90\textwidth}{!}{\begin{minipage}{\textwidth}
\begin{algorithm2e}[H]
  \DontPrintSemicolon
  \SetKwProg{Euler}{}{}{end}
 \Euler{}{
    \tcp*[l]{Step 1: compute off-diagonal $d_{ij}\upLn$ and $\alpha_i$}
    \For{$i\in\calV\upint$}{
      \texttt{compute indicator} $\alpha_i$\;
      \For{$j\in\Ii$, $j>i$}{
        $d_{ij}\upLn\;\leftarrow\;
        \lambda_{\max}(\bn_{ij},\bUni,\bUnj)\,\|\bc_{ij}\|_{\ell^2}$
      }
    }
    \For{$i\in\calV\upbnd$}{
      \texttt{compute indicator} $\alpha_i$\\
      \For{$j\in\Ii$, $j>i$}{
        $d_{ij}\upLn\;\leftarrow\;
        \max\,\left(\lambda_{\max}(\bn_{ij},\bUni,\bUnj)\,\|\bc_{ij}\|_{\ell^2},
        \lambda_{\max}(\bn_{ji},\bUnj,\bUni)\,\|\bc_{ji}\|_{\ell^2} \right)$
      }
    }
\tcp*[l]{Step 2: fill lower-diagonal part and compute $d_{ii}\upLn$ and $\dt_n$}
    $\dt_n\;\leftarrow\;+\infty$\;
    \For{$i=1$, \ldots, $\NN$}{
      \For{$j\in\Ii$, $j<i$}{
        $d_{ij}\upLn\;\leftarrow\;d_{ji}\upLn$\;
      }
      $d_{ii}\upLn\;\leftarrow\;-\sum_{j\in\Ii,j\not=i}d_{ij}\upLn$
      ;\quad
      $\dt_n\;\leftarrow\;\min\left(\dt_n,-c_{\text{cfl}}\frac{m_i}{2d_{ii}^{L,n}}\right)$\;
    }
\tcp*[l]{Step 3: low-order update, compute $\bsfF_i\upH$ and accumulate limiter bounds}
    \For{$i\in\calV$}{
      $\bU^{n+1}_i  \;\leftarrow\; \bU^{n}_i$, \quad $\bsfF_{i}\upH  \;\leftarrow\; \bzero$\;
      \For{$j\in\Ii$}{
        $d_{ij}\upHn \;\leftarrow\; d_{ij}\upLn\,\frac{\alpha_i^n+\alpha_j^n}{2}$
        ;\qquad
        $\bsfF\upH_i \;\leftarrow\; \bsfF\upH_i- \polf_j \SCAL\bc_{ij} + d_{ij}\upHn\big(\bUnj-\bUni\big)$\;
        $\bUnij \;\leftarrow\; \frac12\big(\bUni+\bUnj\big)-\frac1{2\,d_{ij}\upLn}\big(\polf_j-\polf_i\big)\SCAL\bc_{ij}$
        \;
        $\bsfU^{n+1}_i \;\leftarrow\; \bsfU^{n+1}_i + \frac{2\,\dt_n}{m_i}\,d\upLn_{ij}\bUnij$\;
        \texttt{accumulate local bounds from $\bUnij$}\;
      }
    }
\tcp*[l]{Step 4: compute $\bsfP_{ij}$ and $\limiter_{ij}$:}
    \For{$i\in\calV$}{
      \For{$j\in\Ii$}{
        $\bsfP_{ij}\;\leftarrow\;\frac{\dt_n}{\lambda_i m_i}\Big(\big(d_{ij}\upHn-d_{ij}\upHn\big)\big(\bUnj-\bUni\big)+b_{ij}\bsfF\upH_j-b_{ji}\bsfF\upH_i\Big)$\;
        \texttt{compute $l_{ij}$ from $\bU_i^{n+1}$, $\bsfP_{ij}$ and local
        bounds}\;
      }
    }
   $\ell \;\leftarrow\; \min(\ell,\ell\tr)$\;
    \For{pass $=\,1$, \ldots, number of limiter passes}{
      \tcp*[l]{Step 5, 6, \ldots: high-order update and recompute $l_{ij}$:}
      \For{$i\in\calV$}{
        \For{$j\in\Ii$}{
          $\bU^{n+1}_i \;\leftarrow\; \bU^{n+1}_i + \lambda_i
          \limiter_{ij} \bsfP_{ij}^n$\;
        }
      }
      \If{last round}{break}
      \For{$i\in\calV$}{
        \For{$j\in\Ii$}{
          $\bsfP_{ij}\;\leftarrow\; \big(1-\limiter_{ij}\big)\,\bsfP_{ij}$\;
          \texttt{compute $l_{ij}$ from $\bU_i^{n+1}$, $\bsfP_{ij}$ and local
          bounds}\;
        }
      }
    }
  }
\caption{High-order forward Euler step.}
  \label{alg:euler}
\end{algorithm2e}
\end{minipage}}

\clearpage
\newpage

\section{Parabolic step} \label{Sec:appendix_parabolic_step}

\begin{algorithm2e}[H]
  \DontPrintSemicolon
  \SetKwProg{Parabolic}{}{}{end}
 \Parabolic{}{
   \tcp*[l]{Step 1: momentum update}
   \texttt{assemble right-hand side in \eqref{mt_parabolic_discrete}}\;
   \texttt{solve \eqref{mt_parabolic_discrete}}\;
   \texttt{update momentum, \eqref{vel_parabolic_discrete}}\;
    \vspace{0.3em}
\tcp*[l]{Step 2: internal energy and total energy update}
  \texttt{assemble $\sfK\upnph$, \eqref{def_Ki_nplusone}}\;
   \texttt{solve \eqref{high_int_energy_CN}}\;
   \texttt{update internal energy, \eqref{energy_parabolic_discrete}}\;
      \vspace{0.3em}
\tcp*[l]{Step 3: check bounds and limit if bounds are violated}
\uIf{$\min_{i\in\calV} \sfe_i^{n+1}<0$}
{
   \texttt{solve for low-order solution $e_h\upLnp$, \eqref{low_order_update_internal_energy}}\;
   \texttt{compute limiting matrix $A_{ij}$, \eqref{Aij_for_internal_energy}}\;
   \texttt{Compute limiters $\limiter_{ij}$ using FCT}
 }
 \texttt{update total energy, \eqref{energy_parabolic_discrete}}\;
  \Return
}
\caption{High-order parabolic step.}
  \label{alg:parabolic}
\end{algorithm2e}

\clearpage
\newpage

\section{2D shocktube benchmark} \label{Sec:appendix_Daru_Tenaud}

\begin{figure}[H]
  \vspace{-3em}
  \begin{center}
    \rotatebox{-90}{\includegraphics{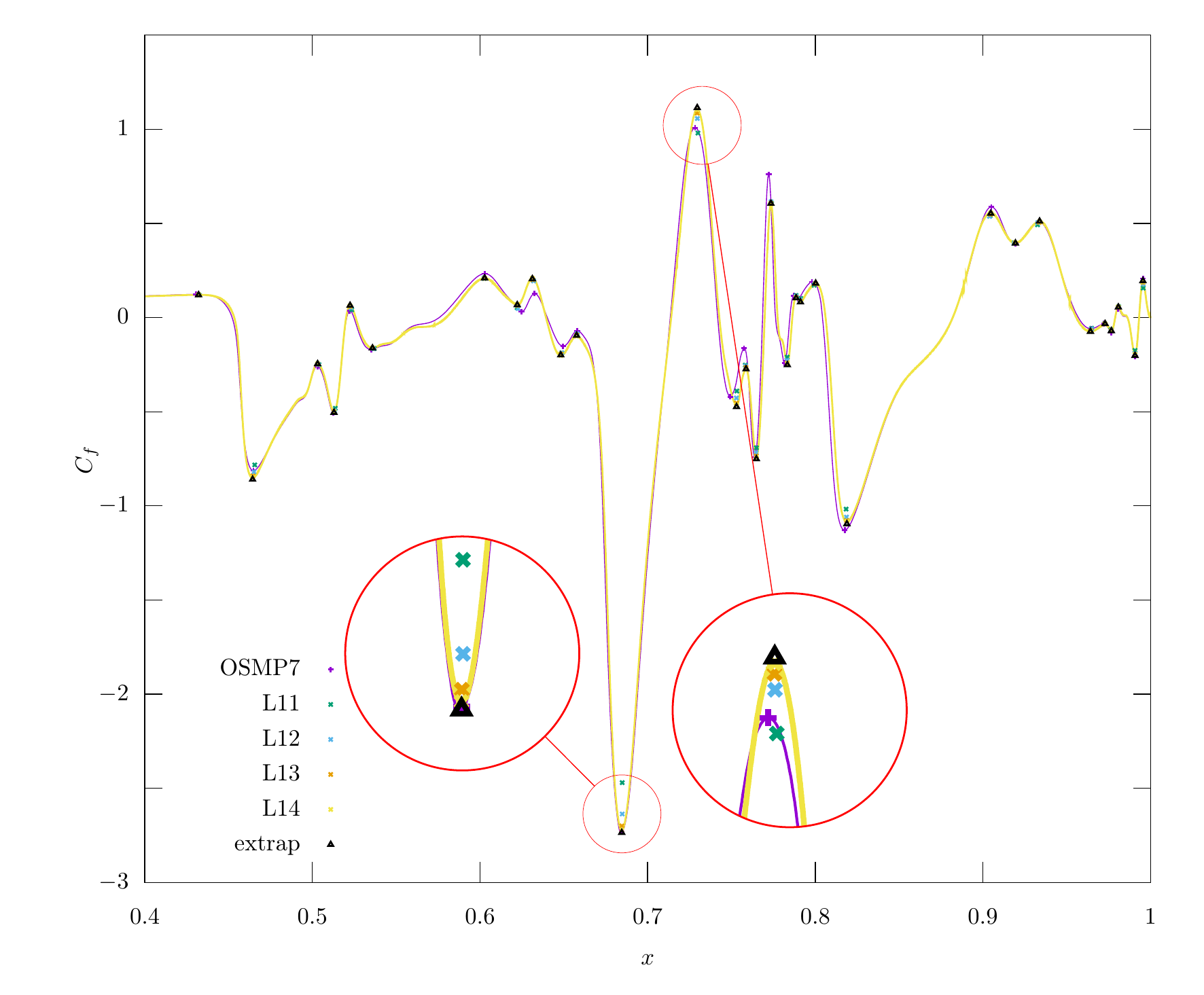}}
  \end{center}
  \vspace{-3em}
  \caption{The 2D shocktube benchmark: detailed plot of the skin friction
    coefficient $C\lof$ at time $t=1.00$. The continuous lines are for the
    finest level (L 14) of our computation and the OSMP7 scheme as reported
    in \citep{Daru_Tenaud_2009, Daru_Tenaud_2020}.
    The two insets show the convergence behavior in the global maximum and
    minimum, respectively.}
  \label{fig:shocktube-detailed}
\end{figure}

\begin{sidewaystable}
  \nprounddigits{5}\npfourdigitnosep
\setlength{\tabcolsep}{4pt}
\footnotesize\begin{tabular}{lrrrrrr}
  \toprule
     & \bfseries{L 11}                 & \bfseries{L 12}                 & \bfseries{L 13}                 & \bfseries{L 14}                 & \bfseries{extrapolated}                      & \bfseries{OSMP7} \\[0.5em]
     \cmidrule(lr){6-6} \cmidrule(lr){7-7}
  1  & \numprint{ 1.20409376273806e-1} & \numprint{ 1.20376707836993e-1} & \numprint{ 1.20423419916032e-1} & \numprint{ 1.20433948810790e-1} & \numprint{ 1.20424e-1}\nprounddigits{2}\,\numprint{\pm\,2.3620e-5}\nprounddigits{5} & \numprint{ 1.2264257810e-1} ( 0.06\%) \\
  2  & \numprint{-7.82459964699053e-1} & \numprint{-8.19449239371003e-1} & \numprint{-8.40837716964292e-1} & \numprint{-8.49506823460467e-1} & \numprint{-8.59232e-1}\nprounddigits{2}\,\numprint{\pm\,9.4290e-4}\nprounddigits{5} & \numprint{-8.1256842040e-1} ( 1.21\%) \\
  3  & \numprint{-2.49532929503623e-1} & \numprint{-2.49041832201642e-1} & \numprint{-2.47106424416843e-1} & \numprint{-2.45695194645541e-1} & \numprint{-2.45930e-1}\nprounddigits{2}\,\numprint{\pm\,8.5010e-4}\nprounddigits{5} & \numprint{-2.5975677490e-1} (-0.36\%) \\
  4  & \numprint{-4.82151799615986e-1} & \numprint{-4.95618270160579e-1} & \numprint{-4.99140059996885e-1} & \numprint{-5.00706620725300e-1} & \numprint{-5.04498e-1}\nprounddigits{2}\,\numprint{\pm\,1.4170e-3}\nprounddigits{5} & \numprint{-5.0692608640e-1} (-0.06\%) \\
  5  & \numprint{ 4.15340699489378e-2} & \numprint{ 5.22412926564442e-2} & \numprint{ 5.80261658369271e-2} & \numprint{ 6.23158571486318e-2} & \numprint{ 6.43990e-2}\nprounddigits{2}\,\numprint{\pm\,7.2400e-4}\nprounddigits{5} & \numprint{ 3.5785827640e-2} (-0.74\%) \\
  6  & \numprint{-1.64434411751677e-1} & \numprint{-1.65337490055917e-1} & \numprint{-1.62150911366611e-1} & \numprint{-1.61934774588354e-1} & \numprint{-1.62017e-1}\nprounddigits{2}\,\numprint{\pm\,1.2510e-3}\nprounddigits{5} & \numprint{-1.6944293210e-1} (-0.19\%) \\
  7  & \numprint{ 2.07532069130256e-1} & \numprint{ 2.11528865972370e-1} & \numprint{ 2.09586586298533e-1} & \numprint{ 2.08250541806358e-1} & \numprint{ 2.09821e-1}\nprounddigits{2}\,\numprint{\pm\,1.7750e-3}\nprounddigits{5} & \numprint{ 2.3291232300e-1} ( 0.60\%) \\
  8  & \numprint{ 5.19467160296020e-2} & \numprint{ 4.77461920212842e-2} & \numprint{ 6.44445067053358e-2} & \numprint{ 6.91407271043740e-2} & \numprint{ 6.74767e-2}\nprounddigits{2}\,\numprint{\pm\,7.0790e-3}\nprounddigits{5} & \numprint{ 3.1794084550e-2} (-0.93\%) \\
  9  & \numprint{ 1.96533003533094e-1} & \numprint{ 1.94637365454839e-1} & \numprint{ 2.01283287553661e-1} & \numprint{ 2.08307161662118e-1} & \numprint{ 2.05560e-1}\nprounddigits{2}\,\numprint{\pm\,4.3980e-3}\nprounddigits{5} & \numprint{ 1.2753921510e-1} (-2.03\%) \\
  0  & \numprint{-1.87467252057716e-1} & \numprint{-1.87351874450880e-1} & \numprint{-1.94432427608844e-1} & \numprint{-2.01263785163033e-1} & \numprint{-1.99171e-1}\nprounddigits{2}\,\numprint{\pm\,4.0690e-3}\nprounddigits{5} & \numprint{-1.5206761170e-1} ( 1.22\%) \\
  11 & \numprint{-9.16785240629972e-2} & \numprint{-8.55851919997993e-2} & \numprint{-9.30821059507293e-2} & \numprint{-9.82795909259543e-2} & \numprint{-9.50432e-2}\nprounddigits{2}\,\numprint{\pm\,4.8910e-3}\nprounddigits{5} & \numprint{-6.9926071170e-2} ( 0.65\%) \\
  12 & \numprint{-2.47024162911368e+0} & \numprint{-2.63696489133308e+0} & \numprint{-2.69976595051529e+0} & \numprint{-2.72151411009086e+0} & \numprint{-2.73529e+0}\nprounddigits{2}\,\numprint{\pm\,1.3110e-3}\nprounddigits{5} & \numprint{-2.7300000000e+0} ( 0.14\%) \\
  13 & \numprint{ 9.79492658414748e-1} & \numprint{ 1.05681319137703e+0} & \numprint{ 1.08511905324862e+0} & \numprint{ 1.10435458061785e+0} & \numprint{ 1.11416e+0}\nprounddigits{2}\,\numprint{\pm\,7.6330e-3}\nprounddigits{5} & \numprint{ 1.0075306400e+0} (-2.77\%) \\
  14 & \numprint{-3.90479051766157e-1} & \numprint{-4.27087726555475e-1} & \numprint{-4.54108954991645e-1} & \numprint{-4.63848845499747e-1} & \numprint{-4.73529e-1}\nprounddigits{2}\,\numprint{\pm\,2.9370e-3}\nprounddigits{5} & \numprint{-4.2079824830e-1} ( 1.37\%) \\
  15 & \numprint{-2.52896375159032e-1} & \numprint{-2.56742198600020e-1} & \numprint{-2.65622097487304e-1} & \numprint{-2.74443854783489e-1} & \numprint{-2.72934e-1}\nprounddigits{2}\,\numprint{\pm\,4.3840e-3}\nprounddigits{5} & \numprint{-1.6270483400e-1} ( 2.86\%) \\
  16 & \numprint{-6.91755652184979e-1} & \numprint{-7.17226550306309e-1} & \numprint{-7.35331871438397e-1} & \numprint{-7.44840743463044e-1} & \numprint{-7.50446e-1}\nprounddigits{2}\,\numprint{\pm\,2.4070e-3}\nprounddigits{5} & \numprint{-7.4094769290e-1} ( 0.25\%) \\
  17 & \numprint{ 6.05556782255291e-1} & \numprint{ 6.15460543676496e-1} & \numprint{ 6.06902790788074e-1} & \numprint{ 6.01852445314633e-1} & \numprint{ 6.06099e-1}\nprounddigits{2}\,\numprint{\pm\,5.9470e-3}\nprounddigits{5} & \numprint{ 7.5986322020e-1} ( 3.99\%) \\
  18 & \numprint{-2.10579423940998e-1} & \numprint{-2.27278772911497e-1} & \numprint{-2.42178209606447e-1} & \numprint{-2.46744876459952e-1} & \numprint{-2.51336e-1}\nprounddigits{2}\,\numprint{\pm\,2.2020e-3}\nprounddigits{5} & \numprint{-2.4175431820e-1} ( 0.25\%) \\
  19 & \numprint{ 1.17573236667707e-1} & \numprint{ 1.12928009846982e-1} & \numprint{ 1.06724235483406e-1} & \numprint{ 1.05138094260686e-1} & \numprint{ 1.03806e-1}\nprounddigits{2}\,\numprint{\pm\,1.3280e-3}\nprounddigits{5} & \numprint{ 1.1596183780e-1} ( 0.32\%) \\
  20 & \numprint{ 1.01598744303537e-1} & \numprint{ 9.45446423652579e-2} & \numprint{ 8.63467186532705e-2} & \numprint{ 8.39869486035175e-2} & \numprint{ 8.20506e-2}\nprounddigits{2}\,\numprint{\pm\,1.6100e-3}\nprounddigits{5} & \numprint{ 1.0164488220e-1} ( 0.51\%) \\
  21 & \numprint{ 1.71732930733951e-1} & \numprint{ 1.78969070618106e-1} & \numprint{ 1.79481668940860e-1} & \numprint{ 1.79635243133770e-1} & \numprint{ 1.81822e-1}\nprounddigits{2}\,\numprint{\pm\,1.3460e-3}\nprounddigits{5} & \numprint{ 1.8956013490e-1} ( 0.20\%) \\
  22 & \numprint{-1.01706817126602e+0} & \numprint{-1.05988119974777e+0} & \numprint{-1.07840001568767e+0} & \numprint{-1.08323638772591e+0} & \numprint{-1.09598e+0}\nprounddigits{2}\,\numprint{\pm\,2.8240e-3}\nprounddigits{5} & \numprint{-1.1288459470e+0} (-0.85\%) \\
  23 & \numprint{ 5.40265975332290e-1} & \numprint{ 5.37249861091246e-1} & \numprint{ 5.47784814838180e-1} & \numprint{ 5.56150529754043e-1} & \numprint{ 5.52875e-1}\nprounddigits{2}\,\numprint{\pm\,6.0380e-3}\nprounddigits{5} & \numprint{ 5.8692303470e-1} ( 0.88\%) \\
  24 & \numprint{ 3.97091275806076e-1} & \numprint{ 3.94569268322583e-1} & \numprint{ 3.96378627403379e-1} & \numprint{ 3.95293951986831e-1} & \numprint{ 3.95108e-1}\nprounddigits{2}\,\numprint{\pm\,9.9770e-4}\nprounddigits{5} & \numprint{ 3.9236697390e-1} (-0.07\%) \\
  25 & \numprint{ 4.92919306146496e-1} & \numprint{ 5.04619108014532e-1} & \numprint{ 5.05090241792478e-1} & \numprint{ 5.09868006872447e-1} & \numprint{ 5.11600e-1}\nprounddigits{2}\,\numprint{\pm\,1.8070e-3}\nprounddigits{5} & \numprint{ 5.0768316650e-1} (-0.10\%) \\
  26 & \numprint{-5.70346467990356e-2} & \numprint{-6.10185283985007e-2} & \numprint{-7.03092559361799e-2} & \numprint{-7.31802766102367e-2} & \numprint{-7.40772e-2}\nprounddigits{2}\,\numprint{\pm\,2.6750e-3}\nprounddigits{5} & \numprint{-5.8938621520e-2} ( 0.39\%) \\
  27 & \numprint{-3.29090336110006e-2} & \numprint{-3.10078816676207e-2} & \numprint{-3.30164868046366e-2} & \numprint{-3.36559556112350e-2} & \numprint{-3.30018e-2}\nprounddigits{2}\,\numprint{\pm\,1.1620e-3}\nprounddigits{5} & \numprint{-3.2502002720e-2} ( 0.01\%) \\
  28 & \numprint{-6.50035829936763e-2} & \numprint{-6.82720910073972e-2} & \numprint{-6.93864559323882e-2} & \numprint{-6.95723650248219e-2} & \numprint{-7.05913e-2}\nprounddigits{2}\,\numprint{\pm\,3.4370e-4}\nprounddigits{5} & \numprint{-7.8727676390e-2} (-0.21\%) \\
  29 & \numprint{ 5.78878104357922e-2} & \numprint{ 5.68206397547076e-2} & \numprint{ 5.51981499845772e-2} & \numprint{ 5.57125083788555e-2} & \numprint{ 5.50454e-2}\nprounddigits{2}\,\numprint{\pm\,4.4230e-4}\nprounddigits{5} & \numprint{ 4.6848423000e-2} (-0.21\%) \\
  30 & \numprint{-1.74615003439992e-1} & \numprint{-1.91536763614145e-1} & \numprint{-1.95747716640579e-1} & \numprint{-1.97235861049769e-1} & \numprint{-2.02170e-1}\nprounddigits{2}\,\numprint{\pm\,1.9810e-3}\nprounddigits{5} & \numprint{-2.0642430110e-1} (-0.11\%) \\
  31 & \numprint{ 1.56319466196051e-1} & \numprint{ 1.78150280655956e-1} & \numprint{ 1.85909546643379e-1} & \numprint{ 1.89010574694147e-1} & \numprint{ 1.95116e-1}\nprounddigits{2}\,\numprint{\pm\,1.5720e-3}\nprounddigits{5} & \numprint{ 2.0723223880e-1} ( 0.31\%) \\
  \bottomrule
\end{tabular}
  \caption{Computed extrema of the skin friction coefficient $C\lof$ for
    the 2D schocktube configuration for refinement levels L\,11 (8 M
    grid points) through L\,14 (512 M grid points). The extrapolated values
    are computed by fitting the linear function $a + b\,h$ to the
    four values obtained on the grids L\,11 to L\,14.
    We report the coefficient $a$ and the asymptotic standard
    error in the column labeled ``extrapolated''. Values for the OSMP7 scheme are
    taken from \cite{Daru_Tenaud_2009, Daru_Tenaud_2020}. The relative
    deviation reported in the last column is computed by
    $\big(C\lof(\text{OSMP7}) - C\lof(\text{extrap.})\big) /
    \big(\max\,C\lof(\text{extrap.}) - \min\,C\lof(\text{extrap.})\big)$}%
  \label{tab:shocktube_extrema}
\end{sidewaystable}

\clearpage
\bibliographystyle{abbrvnat}
\bibliography{ref_ns}

\begin{thebibliography}{52}
\providecommand{\natexlab}[1]{#1}
\providecommand{\url}[1]{\texttt{#1}}
\expandafter\ifx\csname urlstyle\endcsname\relax
  \providecommand{\doi}[1]{doi: #1}\else
  \providecommand{\doi}{doi: \begingroup \urlstyle{rm}\Url}\fi

\bibitem[Arndt et~al.(2020)Arndt, Bangerth, Blais, Clevenger, Fehling, Grayver,
  Heister, Heltai, Kronbichler, Munch, Maier, Pelteret, Rastak, Turcksin, Wang,
  and Wells]{dealII92}
D.~Arndt, W.~Bangerth, B.~Blais, T.~C. Clevenger, M.~Fehling, A.~V. Grayver,
  T.~Heister, L.~Heltai, M.~Kronbichler, P.~Munch, M.~Maier, J.-P. Pelteret,
  R.~Rastak, B.~Turcksin, Z.~Wang, and D.~Wells.
\newblock The deal.{{II}} {{L}}ibrary, {{V}}ersion 9.2.
\newblock \emph{J. Numer. Math.}, 28\penalty0 (3):\penalty0 131--146, 2020.

\bibitem[Arndt et~al.(2021)Arndt, Bangerth, Davydov, Heister, Heltai,
  Kronbichler, Maier, Pelteret, Turcksin, and Wells]{dealIIcanonical}
D.~Arndt, W.~Bangerth, D.~Davydov, T.~Heister, L.~Heltai, M.~Kronbichler,
  M.~Maier, J.-P. Pelteret, B.~Turcksin, and D.~Wells.
\newblock The deal.{{II}} finite element library: design, features, and
  insights.
\newblock \emph{Comput. Math. Appl.}, 81\penalty0 (1):\penalty0 407--422, 2021.

\bibitem[Bangerth et~al.(2011)Bangerth, Burstedde, Heister, and
  Kronbichler]{Bangerth11}
W.~Bangerth, C.~Burstedde, T.~Heister, and M.~Kronbichler.
\newblock Algorithms and data structures for massively parallel generic
  adaptive finite element codes.
\newblock \emph{ACM Trans. Math. Softw.}, 38:\penalty0 14/1--28, 2011.

\bibitem[{Beam} and {Warming}(1977)]{Beam_warming_AIAA_1977}
R.~M. {Beam} and R.~F. {Warming}.
\newblock {An implicit factored scheme for the compressible Navier-Stokes
  equations}.
\newblock In \emph{3rd Computational Fluid Dynamics Conference}, pages
  130--140, Apr. 1977.

\bibitem[{Becker}(1922)]{Becker_1922}
R.~{Becker}.
\newblock {Sto{\ss}welle und Detonation}.
\newblock \emph{Zeitschrift f\"ur Physik}, 8\penalty0 (1):\penalty0 321--362,
  Dec. 1922.

\bibitem[Boris and Book(1997)]{Boris_books_JCP_1973}
J.~P. Boris and D.~L. Book.
\newblock Flux-corrected transport. {I}. {SHASTA}, a fluid transport algorithm
  that works [{J}. {C}omput. {P}hys. {\bf 11} (1973), no. 1, 38--69].
\newblock \emph{J. Comput. Phys.}, 135\penalty0 (2):\penalty0 170--186, 1997.
\newblock With an introduction by Steven T. Zalesak, Commemoration of the 30th
  anniversary of J. Comput. Phys.

\bibitem[Bristeau et~al.(1987)Bristeau, Glowinski, and
  P\'{e}riaux]{Bristeau_Glowinski_Periaux_1987}
M.~O. Bristeau, R.~Glowinski, and J.~P\'{e}riaux.
\newblock Numerical methods for the {N}avier-{S}tokes equations. {A}pplications
  to the simulation of compressible and incompressible viscous flows.
\newblock In \emph{Finite elements in physics ({L}ausanne, 1986)}, pages
  73--187. North-Holland, Amsterdam, 1987.

\bibitem[Burstedde et~al.(2011)Burstedde, Wilcox, and Ghattas]{Burstedde11}
C.~Burstedde, L.~C. Wilcox, and O.~Ghattas.
\newblock p4est: {S}calable algorithms for parallel adaptive mesh refinement on
  forests of octrees.
\newblock \emph{SIAM J. Sci. Comput.}, 33\penalty0 (3):\penalty0 1103--1133,
  2011.

\bibitem[Clayton et~al.(2021)Clayton, Guermond, and
  Popov]{Clayton_Guermond_Popov_2021}
B.~Clayton, J.-L. Guermond, and B.~Popov.
\newblock Invariant domain preserving approximations for the euler equations
  with tabulated equation of state.
\newblock \emph{SIAM J. Sci. Comput.}, 2021.
\newblock In review.

\bibitem[Clevenger et~al.(2021)Clevenger, Heister, Kanschat, and
  Kronbichler]{Clevenger2021}
T.~C. Clevenger, T.~Heister, G.~Kanschat, and M.~Kronbichler.
\newblock A flexible, parallel, adaptive geometric multigrid method for {FEM}.
\newblock \emph{{ACM} Trans. Math. Softw.}, 47\penalty0 (1):\penalty0 7/1--27,
  2021.

\bibitem[Colella and Glaz(1985)]{Colella_Glaz_JCP_1985}
P.~Colella and H.~M. Glaz.
\newblock Efficient solution algorithms for the {R}iemann problem for real
  gases.
\newblock \emph{J. Comput. Phys.}, 59\penalty0 (2):\penalty0 264--289, 1985.

\bibitem[Daru and Tenaud(2001)]{Daru_Tenaud_2000}
V.~Daru and C.~Tenaud.
\newblock Evaluation of {TVD} high resolution schemes for unsteady viscous
  shocked flows.
\newblock \emph{Comput. Fluids}, 30\penalty0 (1):\penalty0 89 -- 113, 2001.

\bibitem[Daru and Tenaud(2009)]{Daru_Tenaud_2009}
V.~Daru and C.~Tenaud.
\newblock Numerical simulation of the viscous shock tube problem by using a
  high resolution monotonicity-preserving scheme.
\newblock \emph{Comput. \& Fluids}, 38\penalty0 (3):\penalty0 664--676, 2009.

\bibitem[Daru and Tenaud(2020)]{Daru_Tenaud_2020}
V.~Daru and C.~Tenaud.
\newblock Shock tube problem data, 2020.
\newblock URL \url{https://zenodo.org/record/4182716}.

\bibitem[Deck(2005)]{Deck_2005}
S.~Deck.
\newblock Numerical simulation of transonic buffet over a supercritical
  airfoil.
\newblock \emph{AIAA Journal}, 43\penalty0 (7):\penalty0 1556--1566, 2005.

\bibitem[Deck and Renard(2020)]{Deck_Renard_2020}
S.~Deck and N.~Renard.
\newblock Towards an enhanced protection of attached boundary layers in hybrid
  {RANS}/{LES} methods.
\newblock \emph{J. Comput. Phys.}, 400:\penalty0 108970, 36, 2020.

\bibitem[Demkowicz et~al.(1990)Demkowicz, Oden, and
  Rachowicz]{Demkowicz_etal_1990}
L.~Demkowicz, J.~T. Oden, and W.~Rachowicz.
\newblock A new finite element method for solving compressible
  {N}avier-{S}tokes equations based on an operator splitting method and
  {$h$}-{$p$} adaptivity.
\newblock \emph{Comput. Methods Appl. Mech. Engrg.}, 84\penalty0 (3):\penalty0
  275--326, 1990.

\bibitem[Fischer et~al.(2020)Fischer, Min, Rathnayake, Dutta, Kolev, Dobrev,
  Camier, Kronbichler, Warburton, \'Swirydowicz, and Brown]{Fischer2020}
P.~Fischer, M.~Min, T.~Rathnayake, S.~Dutta, T.~Kolev, V.~Dobrev, J.-S. Camier,
  M.~Kronbichler, T.~Warburton, K.~\'Swirydowicz, and J.~Brown.
\newblock Scalability of high-performance {PDE} solvers.
\newblock \emph{Int. J. High Perf. Comput. Appl.}, 34\penalty0 (5):\penalty0
  562--586, 2020.

\bibitem[Floater(2015)]{Floater2015}
M.~S. Floater.
\newblock Generalized barycentric coordinates and applications.
\newblock \emph{Acta Numer.}, 24:\penalty0 161--214, 2015.

\bibitem[Fosso et~al.(2012)Fosso, Deniau, Lamarque, and Poinsot]{Fosso_2012}
P.~A. Fosso, H.~Deniau, N.~Lamarque, and T.~Poinsot.
\newblock Comparison of outflow boundary conditions for subsonic aeroacoustic
  simulations.
\newblock \emph{Internat. J. Numer. Methods Fluids}, 68\penalty0 (10):\penalty0
  1207--1233, 2012.

\bibitem[Gallou\"{e}t et~al.(2008)Gallou\"{e}t, Gastaldo, Herbin, and
  Latch\'{e}]{Gallouet_Gastaldo_Herbin_Latche_2008}
T.~Gallou\"{e}t, L.~Gastaldo, R.~Herbin, and J.-C. Latch\'{e}.
\newblock An unconditionally stable pressure correction scheme for the
  compressible barotropic {N}avier-{S}tokes equations.
\newblock \emph{M2AN Math. Model. Numer. Anal.}, 42\penalty0 (2):\penalty0
  303--331, 2008.

\bibitem[Grapsas et~al.(2016)Grapsas, Herbin, Kheriji, and
  Latch\'{e}]{Grapsas_Herbin_Kheriji_Latche_2016}
D.~Grapsas, R.~Herbin, W.~Kheriji, and J.-C. Latch\'{e}.
\newblock An unconditionally stable staggered pressure correction scheme for
  the compressible {N}avier-{S}tokes equations.
\newblock \emph{SMAI J. Comput. Math.}, 2:\penalty0 51--97, 2016.

\bibitem[Gropp et~al.(2000)Gropp, Kaushik, Keyes, and Smith]{Gropp2000}
W.~D. Gropp, D.~K. Kaushik, D.~E. Keyes, and B.~F. Smith.
\newblock Performance modeling and tuning of an unstructured mesh {CFD}
  application.
\newblock In \emph{SC'00: Proceedings of the 2000 ACM/IEEE Conference on
  Supercomputing}, pages 34--34, 2000.

\bibitem[Guermond and Pasquetti(2013)]{GuerPasq2013}
J.-L. Guermond and R.~Pasquetti.
\newblock A correction technique for the dispersive effects of mass lumping for
  transport problems.
\newblock \emph{Comput. Methods Appl. Mech. Engrg.}, 253:\penalty0 186--198,
  2013.

\bibitem[Guermond and
  Popov(2016{\natexlab{a}})]{Guermond_Popov_Fast_Riemann_2016}
J.-L. Guermond and B.~Popov.
\newblock Fast estimation from above of the maximum wave speed in the {R}iemann
  problem for the {E}uler equations.
\newblock \emph{J. Comput. Phys.}, 321:\penalty0 908--926, 2016{\natexlab{a}}.

\bibitem[Guermond and Popov(2016{\natexlab{b}})]{Guermond_Popov_SINUM_2016}
J.-L. Guermond and B.~Popov.
\newblock Invariant domains and first-order continuous finite element
  approximation for hyperbolic systems.
\newblock \emph{SIAM J. Numer. Anal.}, 54\penalty0 (4):\penalty0 2466--2489,
  2016{\natexlab{b}}.

\bibitem[Guermond et~al.(2014)Guermond, Nazarov, Popov, and Yang]{GuerNaza2014}
J.-L. Guermond, M.~Nazarov, B.~Popov, and Y.~Yang.
\newblock A second-order maximum principle preserving {L}agrange finite element
  technique for nonlinear scalar conservation equations.
\newblock \emph{SIAM J. Numer. Anal.}, 52\penalty0 (4):\penalty0 2163--2182,
  2014.

\bibitem[Guermond et~al.(2018)Guermond, Nazarov, Popov, and
  Tomas]{Guermond_Nazarov_Popov_Tomas_SISC_2019}
J.-L. Guermond, M.~Nazarov, B.~Popov, and I.~Tomas.
\newblock Second-order invariant domain preserving approximation of the {E}uler
  equations using convex limiting.
\newblock \emph{SIAM J. Sci. Comput.}, 40\penalty0 (5):\penalty0 A3211--A3239,
  2018.

\bibitem[Guermond et~al.(2019)Guermond, Popov, and
  Tomas]{Guermond_Popov_Tomas_CMAME_2019}
J.-L. Guermond, B.~Popov, and I.~Tomas.
\newblock Invariant domain preserving discretization-independent schemes and
  convex limiting for hyperbolic systems.
\newblock \emph{Comput. Methods Appl. Mech. Engrg.}, 347:\penalty0 143--175,
  2019.

\bibitem[Guermond et~al.(2021{\natexlab{a}})Guermond, Kronbichler, Maier,
  Popov, and Tomas]{testvectors_2021}
J.-L. Guermond, M.~Kronbichler, M.~Maier, B.~Popov, and I.~Tomas.
\newblock Shock tube problem data, 2021{\natexlab{a}}.
\newblock URL \url{https://zenodo.org/record/4895237}.

\bibitem[Guermond et~al.(2021{\natexlab{b}})Guermond, Maier, Popov, and
  Tomas]{Guermond_Maier_Popov_Tomas_CMAME_2020}
J.-L. Guermond, M.~Maier, B.~Popov, and I.~Tomas.
\newblock Second-order invariant domain preserving approximation of the
  compressible {N}avier--{S}tokes equations.
\newblock \emph{Comput. Methods Appl. Mech. Engrg.}, 375:\penalty0 113608,
  2021{\natexlab{b}}.

\bibitem[Hedstrom(1979)]{Hedstrom_1979}
G.~W. Hedstrom.
\newblock Nonreflecting boundary conditions for nonlinear hyperbolic systems.
\newblock \emph{J. Comput. Phys.}, 30\penalty0 (2):\penalty0 222--237, 1979.

\bibitem[Heltai et~al.(2021)Heltai, Bangerth, Kronbichler, and
  Mola]{Heltai_2019}
L.~Heltai, W.~Bangerth, M.~Kronbichler, and A.~Mola.
\newblock Propagating geometry information to finite element computations.
\newblock \emph{ACM Trans. Math. Softw.}, 47\penalty0 (4):\penalty0 32/1--30,
  2021.

\bibitem[Jacquin et~al.(2005)Jacquin, Molton, Deck, Maury, and
  Soulevant]{Jacquin_2005}
L.~Jacquin, P.~Molton, S.~Deck, B.~Maury, and D.~Soulevant.
\newblock \emph{An Experimental Study of Shock Oscillation over a Transonic
  Supercritical Profile}.
\newblock 2005.
\newblock \doi{10.2514/6.2005-4902}.

\bibitem[Jacquin et~al.(2009)Jacquin, Molton, Deck, Maury, and
  Soulevant]{Jacquin_2009}
L.~Jacquin, P.~Molton, S.~Deck, B.~Maury, and D.~Soulevant.
\newblock Experimental study of shock oscillation over a transonic
  supercritical profile.
\newblock \emph{AIAA Journal}, 47\penalty0 (9):\penalty0 1985--1994, 2009.

\bibitem[Johnson(2013)]{Johnson_JFM_2013}
B.~M. Johnson.
\newblock Analytical shock solutions at large and small {P}randtl number.
\newblock \emph{J. Fluid Mech.}, 726:\penalty0 R4, 12, 2013.

\bibitem[Kraaijevanger(1991)]{Kraaijevanger_1991}
J.~F. B.~M. Kraaijevanger.
\newblock Contractivity of {R}unge-{K}utta methods.
\newblock \emph{BIT}, 31\penalty0 (3):\penalty0 482--528, 1991.

\bibitem[Kronbichler and Kormann(2012)]{Kronbichler2012}
M.~Kronbichler and K.~Kormann.
\newblock A generic interface for parallel cell-based finite element operator
  application.
\newblock \emph{Comput. Fluids}, 63:\penalty0 135--147, 2012.

\bibitem[Kronbichler and Kormann(2019)]{Kronbichler2019b}
M.~Kronbichler and K.~Kormann.
\newblock Fast matrix-free evaluation of discontinuous {G}alerkin finite
  element operators.
\newblock \emph{ACM Trans. Math. Softw.}, 45\penalty0 (3):\penalty0 29/1--40,
  2019.

\bibitem[Kronbichler and Ljungkvist(2019)]{Kronbichler2019}
M.~Kronbichler and K.~Ljungkvist.
\newblock Multigrid for matrix-free high-order finite element computations on
  graphics processors.
\newblock \emph{ACM Trans. Parallel Comput.}, 6\penalty0 (1):\penalty0 2/1--32,
  2019.

\bibitem[Kronbichler and Wall(2018)]{Kronbichler2018}
M.~Kronbichler and W.~A. Wall.
\newblock A performance comparison of continuous and discontinuous {G}alerkin
  methods with fast multigrid solvers.
\newblock \emph{SIAM J. Sci. Comput.}, 40\penalty0 (5):\penalty0 A3423--A3448,
  2018.

\bibitem[Kronbichler et~al.(2021)Kronbichler, Sashko, and
  Munch]{Kronbichler2021}
M.~Kronbichler, D.~Sashko, and P.~Munch.
\newblock Enhancing data locality of the conjugate gradient method for
  high-order matrix-free finite-element implementations, 2021.
\newblock submitted.

\bibitem[Kuzmin et~al.(2005)Kuzmin, L\"{o}hner, and
  Turek]{KuzminLoehnerTurek2004}
D.~Kuzmin, R.~L\"{o}hner, and S.~Turek.
\newblock \emph{Flux--Corrected Transport}.
\newblock Scientific Computation. Springer, 2005.
\newblock 3-540-23730-5.

\bibitem[{LeVeque} et~al.(2012){LeVeque}, {Mitchell}, and
  {Stodden}]{Leveque_Mitchell_Stodden_2012}
R.~J. {LeVeque}, I.~M. {Mitchell}, and V.~{Stodden}.
\newblock Reproducible research for scientific computing: Tools and strategies
  for changing the culture.
\newblock \emph{Computing in Science Engineering}, 14\penalty0 (4):\penalty0
  13--17, 2012.

\bibitem[Maier and Kronbichler(2021)]{maier2020massively}
M.~Maier and M.~Kronbichler.
\newblock Efficient parallel {3D} computation of the compressible {E}uler
  equations with an invariant-domain preserving second-order finite-element
  scheme.
\newblock \emph{ACM Trans. Parallel Comput.}, 8\penalty0 (3):\penalty0
  16/1--30, 2021.

\bibitem[Maier and Tomas(2020)]{MaierTomas2020}
M.~Maier and I.~Tomas.
\newblock The step-69 tutorial program: implementation of a graph-based scheme
  for {E}uler's equation of compressible gas dynamics. deal.{II} library.,
  2020.
\newblock URL
  \url{https://www.dealii.org/developer/doxygen/deal.II/step_69.html}.

\bibitem[Maier et~al.(2021)Maier, Kronbichler, Tomas, and
  Tovar]{maier2021ryujin}
M.~Maier, M.~Kronbichler, I.~Tomas, and E.~Tovar.
\newblock ryujin: High-performance second-order collocation-type finite-element
  scheme for solving the compressible {N}avier-{S}tokes and {E}uler equations,
  2021.
\newblock URL \url{https://zenodo.org/record/4577766}.

\bibitem[Nguyen et~al.(2020)Nguyen, Terrana, and
  Peraire]{Nguyen_Terrana_Peraire_AIAA_2020}
C.~Nguyen, S.~Terrana, and J.~Peraire.
\newblock \emph{Wall-resolved implicit large eddy simulation of transonic
  buffet over the OAT15A airfoil using a discontinuous Galerkin method}.
\newblock 2020.
\newblock \doi{10.2514/6.2020-2062}.

\bibitem[Shu and Osher(1988)]{Shu_Osher1988}
C.-W. Shu and S.~Osher.
\newblock Efficient implementation of essentially non-oscillatory
  shock-capturing schemes.
\newblock \emph{J. Comput. Phys.}, 77\penalty0 (2):\penalty0 439 -- 471, 1988.

\bibitem[Toro(2009)]{Toro_2009}
E.~F. Toro.
\newblock \emph{Riemann solvers and numerical methods for fluid dynamics}.
\newblock Springer-Verlag, Berlin, third edition, 2009.
\newblock A practical introduction.

\bibitem[Zalesak(1979)]{Zalesak_1979}
S.~T. Zalesak.
\newblock Fully multidimensional flux-corrected transport algorithms for
  fluids.
\newblock \emph{J. Comput. Phys.}, 31\penalty0 (3):\penalty0 335--362, 1979.

\bibitem[Zhang(2017)]{Zhang_JCP_2017}
X.~Zhang.
\newblock On positivity-preserving high order discontinuous {G}alerkin schemes
  for compressible {N}avier-{S}tokes equations.
\newblock \emph{J. Comput. Phys.}, 328:\penalty0 301--343, 2017.

\end{thebibliography}

\end{document}